\documentclass[11pt]{article}

\usepackage[letterpaper,margin=0.88in]{geometry}
\usepackage{ifpdf}
\usepackage{amsmath,amsthm,amsfonts,amssymb}

\usepackage[T1]{fontenc}
\usepackage{lmodern}
\usepackage{microtype}

\usepackage{amsmath,amssymb,amsthm,mathtools}
\usepackage{graphicx}
\usepackage{booktabs}
\usepackage{enumitem}
\usepackage[font=small,labelfont=bf]{caption}
\usepackage[hidelinks]{hyperref}
\usepackage[capitalise,nameinlink]{cleveref}

\usepackage{mathtools}

\makeatletter
\renewcommand{\maketitle}{%
	\begin{center}
		{\LARGE \@title \par}
		\vskip 1.5em
		{\large
			\lineskip .5em
			\begin{tabular}[t]{c}
				\@author
			\end{tabular}\par}
	\end{center}
	\par\vskip 1.5em
}
\makeatother

\usepackage{lipsum}
\usepackage{amsfonts}
\usepackage{graphicx}
\usepackage{xcolor}
\colorlet{RED}{red}
\colorlet{GREEN}{green!50!black}
\colorlet{BLACK}{black}
\usepackage{epstopdf}
\usepackage{algorithmic}
\usepackage{fancyhdr,subfigure}
\ifpdf
\DeclareGraphicsExtensions{.eps,.pdf,.png,.jpg}
\else
\DeclareGraphicsExtensions{.eps}
\fi

\usepackage{booktabs}
\usepackage{hyperref}
\usepackage{stmaryrd}
\usepackage{bm}
\usepackage{caption}
\usepackage{multirow}
\usepackage{enumitem}
\usepackage{amssymb}
\usepackage{amsopn}
\usepackage{cleveref}

\theoremstyle{plain}
\newtheorem{theorem}{Theorem}[section]
\newtheorem{lemma}[theorem]{Lemma}
\newtheorem{proposition}[theorem]{Proposition}
\newtheorem{corollary}[theorem]{Corollary}

\theoremstyle{definition}
\newtheorem{definition}[theorem]{Definition}

\theoremstyle{remark}
\newtheorem{remark}[theorem]{Remark}

\providecommand{\email}[1]{\texttt{#1}}

\newenvironment{keywords}
{\par\medskip\noindent\textbf{Keywords.}\ }
{\par\medskip}

\newenvironment{AMS}
{\par\medskip\noindent\textbf{Mathematics Subject Classification.}\ }
{\par\medskip}

\title{Hidden Accuracy and Superconvergence Analysis of Central Discontinuous Galerkin Methods on Overlapping Meshes 
	\thanks{
			This work was partially supported by Science Challenge Project (No.~TZ2025007), Shenzhen Science and Technology Program (Grant Nos.~JCYJ20250604144300001 and RCJC20221008092757098), and 
			National Natural Science Foundation of China (No.~124B2022).}}
	
	\author{Manting Peng\thanks{Department of Mathematics, Southern University of Science and Technology, Shenzhen, Guangdong 518055, China  (\email{pengmt2024@mail.sustech.edu.cn}).}
		\and Kailiang Wu\thanks{Corresponding author. Department of Mathematics and Shenzhen International Center for Mathematics, Southern University of Science and Technology, Shenzhen, Guangdong 518055, China (\email{wukl@sustech.edu.cn}).}} 

\ifpdf
\hypersetup{
	pdftitle={},
	pdfauthor={}
}
\fi

\allowdisplaybreaks

\newlist{steps}{enumerate}{1}
\setlist[steps, 1]{label =\textbf{ Step \arabic*:}}

\usepackage{multirow}

\begin{document}

	\maketitle

\begin{abstract}
This paper establishes the first rigorous superconvergence theory for semidiscrete and fully discrete central discontinuous Galerkin (CDG) methods for linear hyperbolic equations on overlapping meshes. While the optimal $L^2$ convergence of $\mathbb{Q}^k$ CDG schemes was established on uniform Cartesian meshes by Liu, Shu, and Zhang~[{\em SIAM J.~Numer.~Anal.}, 56 (2018), pp.~520--541], {\it their observed $\mathcal{O}(h^{k+2})$ pointwise superconvergence has remained unproven}, due to the loss of standard single-mesh Galerkin orthogonality inherent in the CDG overlapping structure. 
To overcome this fundamental barrier, we introduce a projection-correction framework that identifies a hidden superconvergent mechanism: an asymptotic weak residual cancellation in one dimension, and a high-order cancellation-by-aggregation (HOCA) mechanism in multiple dimensions. This HOCA approach overcomes the analytical challenge posed by coupled primal-dual directional residuals, recovering critical error cancellation properties absent from the standard variational formulation. Consequently, we provide the rigorous proof of the conjectured $\mathcal{O}(h^{k+2})$ pointwise superconvergence in the discrete $\ell^{\infty}$ norm across all superconvergent points. Furthermore, we reveal that under a systematically corrected initialization, this framework yields a previously undiscovered, stronger cell-average superconvergence estimate of order $\mathcal{O}(h^{\min\{2k+1,k+3\}})$. The theory is extended to fully discrete explicit Runge--Kutta CDG schemes, where stagewise corrected errors are constructed to preserve spatial superconvergence up to temporal truncation errors, yielding a stable reconstruction-based postprocessing estimate.  Numerical experiments in one and two spatial dimensions confirm the sharpness of the theoretical rates.
\end{abstract}

\begin{keywords}
	Central discontinuous Galerkin methods, hyperbolic equations, superconvergence, projection-correction, high-order cancellation-by-aggregation, Runge--Kutta methods
\end{keywords}

	\begin{AMS}
		65M60, 65M12, 65M15, 35L65
	\end{AMS}

\section{Introduction}
Among the many high-order methods for hyperbolic problems, discontinuous Galerkin (DG) methods \cite{cockburn1989tvb,cockburn1998runge,cockburn2001runge,peng2025oedg} are widely used due to their local conservation, high-order accuracy, and natural compatibility with both explicit time discretizations and complex geometries. Within this family, the central discontinuous Galerkin (CDG) method introduced by Liu \emph{et al.}~\cite{liu2007central} uses two overlapping meshes, referred to as the primal and dual meshes, and defines discontinuous polynomial spaces on each. This overlapping structure eliminates the need for numerical fluxes and Riemann solvers, providing a ``black-box'' spatial discretization for hyperbolic equations. CDG methods have been successfully applied to a variety of physical systems \cite{li2011central,wu2017physical,wu2023provably,ding2024gql}. While the optimal-order error estimates for semidiscrete $\mathbb{Q}^k$ CDG schemes on uniform Cartesian meshes were established in \cite{liu2018optimal,jiao2022optimal}, those analyses rely on specially designed projections adapted to the spatial operator. More precisely, the projection error satisfies a high-order residual estimate in the spatial operator, which is then used to obtain the optimal \(L^2\) convergence rate of order \(k+1\). 
Consequently, this operator-level estimate is distinct from the pointwise and cell-average superconvergence explored in this work.

Beyond the optimal \(L^2\) theory, numerical evidence points to stronger convergence phenomena: point values at specific points and cell averages converge faster than global error. This phenomenon is important both analytically and computationally, since it identifies quantities that can be used for smoothness indication or postprocessing. For standard DG methods, the corresponding superconvergence theory is by now extensive; see, e.g., \cite{adjerid_flaherty_krivodonova_2002,adjerid_massey_2006,biswas_devine_flaherty_1994,cockburn_luskin_shu_suli_2003,xie_zhang_2010,zhang_xie_zhang_2009} for steady and singularly perturbed problems, and \cite{cheng_shu_2008,cheng_shu_2010,meng_shu_zhang_wu_2012,yang_shu_2012,cao_zhang_zou_2014,cao_li_yang_zhang_2017,cao_liu_zhang_2017,cao_shu_yang_zhang_2015,xu_meng_shu_zhang_2020,xu_zhang_2022,lu2021oscillation,chen2020superconvergence,cockburn2017superconvergence} for time-dependent hyperbolic and convection--diffusion equations. For the one-dimensional (1D) linear advection equation with an upwind numerical flux, for example, a degree-$k$ DG solution exhibits \((k+2)\)nd-order superconvergence at Radau points and $(2k+1)$st-order superconvergence in cell averages and at certain downwind points. A key tool in this theory is the correction-function framework of Cao, Zhang, and Zou \cite{cao_zhang_zou_2014}, which refines Gauss--Radau projection arguments and explains several DG superconvergent mechanisms in a unified way.

In 2018, Liu, Shu, and Zhang \cite{liu2018optimal} established the optimal $L^2$ convergence of semidiscrete $\mathbb{Q}^k$ CDG schemes  on uniform Cartesian meshes, 
and they also observed a $\mathcal{O}(h^{k+2})$ pointwise superconvergence for CDG schemes. 
However, the rigorous proof of their conjectured superconvergence phenomenon had remained open due to the loss of standard single-mesh Galerkin orthogonality inherent in the CDG overlapping structure. 
This raises a natural question: 
 {\em which feature of the overlapping-mesh structure accounts for the pointwise accuracy observed in computations but not captured by the global $L^2$ estimate?} However, for CDG methods, even the benchmark case of the linear advection equation on uniform Cartesian overlapping meshes has lacked a comparable mechanism-level explanation.  The obstruction to superconvergent analysis for CDG methods is structural: the staggered primal and dual meshes of CDG methods lead to coupled primal-dual residuals in the error equation rather than a single-mesh DG residual, and the local Galerkin orthogonality used in classical correction-function analysis for standard DG methods is no longer available. {\em Thus the present theory is not obtained by simply combining the Liu--Shu--Zhang (LSZ) projector with the Cao-type DG correction hierarchy.} In fact, we show that the observed pointwise and cell-average superconvergence is governed by the order of the primal-dual coupling term in the residual. By decomposing the Galerkin test space into cell-average (piecewise-constant functions) and zero-cell-average components, we identify the former as the key component in the superconvergence analysis, which we term {\it principal residual cancellation}. In 1D, we prove that the principal residual cancellation satisfies a higher-order estimate than that suggested by the standard $L^2$-error analysis. The remaining zero-cell-average component is then controlled via specially constructed correction functions with zero cell averages. Together, these two estimates provide the error bound necessary to establish the pointwise and cell-average superconvergence. In two-dimensional (2D) and higher-dimensional problems, the same cancellation fails term-by-term because the tensor-product projector has no single variational characterization and the coordinate directions are coupled through the overlapping meshes. The remedy is directional correction together with a high-order cancellation-by-aggregation (HOCA) principle: the directional projection residual alone does not yield the sharp asymptotic order, while the aggregate of the directional projection residual and all same-direction correction residuals does. Thus, the contribution is not a direct application of classical DG superconvergence theory to CDG methods, but a mechanism-level superconvergence analysis.

This paper establishes a rigorous superconvergence theory for semidiscrete and fully
discrete \(\mathbb Q^k\)-based CDG methods for the linear advection equation on uniform Cartesian overlapping meshes.
The first part of the theory develops a corrected-error residual-cancellation analysis. The core mechanism relies on the corrections to eliminate the dominant projector-induced primal-dual error, thereby exposing the key residual cancellation for superconvergence. Conditioned on a corrected initialization, this framework establishes a previously undiscovered, stronger $\mathcal{O}(h^{\min\{2k+1,k+3\}})$ cell-average estimate. The second part proves the observed pointwise phenomenon under the LSZ projection initialization in the \(\ell^\infty\)-norm over all superconvergent points. Its proof relies on the discrete $\ell_h^2$ estimate at superconvergent points and their difference quotients with an anchored discrete Sobolev inequality, and therefore explains why the pointwise theorem avoids the \(h^{-d/2}\) loss that would follow from a direct inverse inequality applied to a global \(L^2\) error bound, where $d$ is the spatial dimension.

The main contributions are as follows.
\begin{itemize}
	\item We discover the hidden residual-cancellation mechanism behind superconvergence: asymptotic weak constant-test cancellation in 1D and directional aggregation through HOCA in multiple dimensions. This mechanism is distinct from both the optimal-error LSZ projector and the strong single-mesh Galerkin orthogonality used in the Cao-type DG correction theory.
	\item For semidiscrete CDG schemes, we identify and rigorously prove a stronger $\mathcal{O}(h^{\min\{2k+1,k+3\}})$ cell-average superconvergent estimate under a corrected initialization.  
	\item We prove the pointwise superconvergence rate under the LSZ projection initialization in the $\ell^{\infty}$-norm over all superconvergent points. The proof combines $\ell_h^2$ pointwise estimates for superconvergent points and their difference quotients, and a discrete Sobolev inequality at chosen points.  
	\item For multidimensional overlapping Cartesian meshes, 
	we construct directional correction functions based on Gauss--Lobatto projections and LSZ projections, and prove the HOCA residual estimate needed to close the tensor-product analysis. 
	We emphasize that the multidimensional weak cancellation is not inherited by each directional residual separately, because the tensor-product projection has no single local variational characterization and the primal--dual overlap couples transverse and mixed terms. The HOCA argument shows that the sharp order is recovered only after the projection and same-direction correction residuals are aggregated. This feature explains why the multidimensional theory is nontrivial and not a routine tensorization of the 1D analysis.
	\item For explicit RKCDG schemes, we construct stagewise corrected errors and prove that the fully discrete method preserves the same spatial superconvergent orders, up to temporal truncation errors, under the stability conditions of \cite{peng2025oscillation}.
	\item We prove that accurate cell averages can be converted, through a stable reconstruction-based postprocessor, into a higher-order polynomial approximation.
	\item The numerical experiments confirm the predicted cell-average and postprocessing approximation rates. 
\end{itemize}
It is important to distinguish the two initializations. The uncorrected LSZ
projection initialization is sufficient for the maximum-norm pointwise estimate
at the superconvergent points. The higher-order cell-average estimate, however,
requires a corrected initialization, because the corrected error must be small
initially at order \(h^{\sigma_k}\).

As in many existing superconvergence results for standard, non-central DG methods
\cite{cheng_shu_2010,yang_shu_2012,cao_zhang_zou_2014,cao_shu_yang_zhang_2015,cao2018superconvergence},
the first superconvergence analysis for CDG methods presented here is restricted
to overlapping Cartesian grids. To the best of our knowledge, an analogous
superconvergence theory on general triangular meshes, or more broadly on
non-Cartesian meshes, is not yet available even for many standard DG schemes for
hyperbolic problems. This is largely due to the absence, or the current lack of
understanding, of the relevant superconvergent point structure on such meshes.
Whether CDG methods exhibit superconvergence on non-Cartesian overlapping meshes
therefore remains an open problem.

The paper is organized as follows. Section~2 reviews the CDG scheme, including stability and optimal-error estimates. Sections~3 and~4 develop the projection--correction framework in 1D and multiple dimensions, respectively; the final subsection of Section~4 proves the LSZ-projection-based \(\ell^\infty\)-norm pointwise superconvergence theorem and discusses the extension to higher Cartesian dimensions. Section~5 presents the fully discrete RKCDG estimates and the reconstruction-based postprocessing result. Sections~6 and~7 report numerical tests and conclude the paper.

\textbf{Notation.}
Throughout the paper, $A\lesssim B$ means $A\le MB$ for a generic constant $M>0$ independent of the mesh size $h$
(and of $\Delta t$ in the fully discrete setting). The constant $M$ may change from line to line. For any cell $K$, we write
\[
\|w\|_K^2 := \int_K |w|^2\,dx,\qquad \|w\|^2 := \sum_K \|w\|_K^2,
\]
and use $(\cdot,\cdot)_K$ for the $L^2$ inner product on $K$. We denote by $\mathbb{Q}^k$ the tensor-product space of polynomials of degree at most $k$ in each coordinate direction.

\section{Semidiscrete CDG method}\label{sec:semiCDG}

We consider the linear advection equation with periodic boundary conditions
\begin{equation}\label{equ:conservation_law}
	\partial_t u + \boldsymbol{\beta}\cdot\nabla u = 0,
\end{equation}
where \( u = u(\bm{x}, t) \) is the exact solution, \( u_0(\bm{x}) \) is the initial condition, and $\boldsymbol{\beta} = (\beta_1, \dots, \beta_d)$ is a constant vector. For simplicity, we assume \(\boldsymbol{\beta}=(1,\dots,1)\), since the extension to a general constant
\(\boldsymbol{\beta}\) is direct.

\subsection{Formulation}

Unlike standard DG methods, the CDG method employs two overlapping meshes: a primal mesh \( \Gamma_h^C \) and a dual mesh \( \Gamma_h^D \). The corresponding finite element spaces are defined as
\begin{align*}
	V_h &:= \{ v \in L^2(\Omega) : v|_K \in  \mathbb{Q}^k(K),\quad \forall K \in \Gamma_h^C\}, \\
	W_h &:= \{ v \in L^2(\Omega) : v|_K \in \mathbb{Q}^k(K),\quad \forall K \in \Gamma_h^D\}.
\end{align*}
For \eqref{equ:conservation_law}, the semidiscrete CDG solution is denoted by \( \boldsymbol{u}_h = (u_h^C, u_h^D) \in V_h \times W_h \). For any test function $\bm{\phi} = (\phi_1, \phi_2) \in V_h \times W_h$, $\boldsymbol{u}_h$ satisfies the system
\begin{equation}\label{equ:semiCDG}
	\begin{aligned}
		(\partial_t u_h^C, \phi_1)_C &= \hat{B}_C(\boldsymbol{u}_h, \phi_1), \quad \forall C \in \Gamma_h^C, \\
		(\partial_t u_h^D, \phi_2)_D &= \tilde{B}_D(\boldsymbol{u}_h, \phi_2), \quad \forall D \in \Gamma_h^D,
	\end{aligned}
\end{equation}
where \( (\cdot, \cdot)_K \) denotes the standard \( L^2 \) inner product over cell \( K \). Unless otherwise stated, \(\bm w=(w_C,w_D)\) denotes an arbitrary element of \(V_h\times W_h\).

\subsection*{1D formulation}
In 1D, the overlapping uniform meshes are defined as
\begin{align*}
\Gamma_h^C = \bigcup_i \{I_i = [x_{i-1/2}, x_{i+1/2}]\}, \quad \Gamma_h^D = \bigcup_i \{I_{i+1/2} = [x_i, x_{i+1}]\},
\end{align*}
with uniform mesh size $h = x_{i+1/2} - x_{i-1/2}$ and center of $I_i$ $x_i = \tfrac{1}{2}(x_{i+1/2} + x_{i-1/2})$. The local spatial operators in \eqref{equ:semiCDG} are defined as
\begin{equation}\label{eq:1D_spatial}
	\begin{aligned}
		\hat{B}_{i}(\bm{w}, \phi_1) &= \frac{1}{\tau_{\max}} \int_{I_i} (w_D - w_C)\phi_1 \,\mathrm{d}x \\
		&\quad + \int_{I_i} w_D \partial_x \phi_1 \,\mathrm{d}x - (w_D)_{i+1/2} (\phi_1)_{i+1/2}^- + (w_D)_{i-1/2} (\phi_1)_{i-1/2}^+, \\
		\tilde{B}_{i+1/2}(\bm{w}, \phi_2) &= \frac{1}{\tau_{\max}} \int_{I_{i+1/2}} (w_C - w_D)\phi_2 \,\mathrm{d}x \\
		&\quad + \int_{I_{i+1/2}} w_C \partial_x \phi_2 \,\mathrm{d}x - (w_C)_{i+1} (\phi_2)_{i+1}^- + (w_C)_{i} (\phi_2)_{i}^+,
	\end{aligned}
\end{equation}
where $\tau_{\max} = C_{\text{CFL}} h$ with $C_{\rm CFL}$ denoting the  CFL number. We adopt standard trace notation: $(w)_{i+1/2}^{\pm} = \lim_{\epsilon \to 0^+} w(x_{i+1/2} \pm \epsilon)$.  The operators $\hat{B}$ and $\tilde{B}$ represent the CDG  spatial operator on the primal and dual meshes, respectively.  Note that the $\frac{\pm1}{\tau_{\max}}(w_D - w_C)$ term couples the two meshes.

\subsection*{2D formulation}
In 2D, the meshes are given by
\begin{align*}
	\Gamma_h^C &= \bigcup_{i,j} \{C_{i,j} = I_i \times J_j\}, \quad \text{with } I_i := [x_{i-1/2}, x_{i+1/2}], \, J_j := [y_{j-1/2}, y_{j+1/2}], \\
	\Gamma_h^D &= \bigcup_{i,j} \{C_{i+1/2, j+1/2} = I_{i+1/2} \times J_{j+1/2}\}, \quad \text{with } I_{i+1/2} := [x_i, x_{i+1}], \, J_{j+1/2} := [y_j, y_{j+1}],
\end{align*}
with $h_x = \max_i |I_i|$, $h_y = \max_j |J_j|$, and $h = \max\{h_x, h_y\}$. The 2D spatial operators are defined as
\begin{subequations}\label{eq:2D_spatial}
	\begin{align}
		\hat{B}_{i,j}(\bm{w}, \phi_1)
		&= \frac{1}{\tau_{\max}} \int_{C_{i,j}} (w_D - w_C) \phi_1 \,\mathrm{d}x\mathrm{d}y
		+ \int_{C_{i,j}} w_D \boldsymbol{\beta} \cdot \nabla \phi_1 \,\mathrm{d}x\mathrm{d}y \notag \\
		&\quad - \int_{J_j} \left[ \beta_1 w_D(x_{i+1/2}, y) (\phi_1)_{i+1/2}^- - \beta_1 w_D(x_{i-1/2}, y) (\phi_1)_{i-1/2}^+ \right] \,\mathrm{d}y \notag \\
		&\quad - \int_{I_i} \left[ \beta_2 w_D(x, y_{j+1/2}) (\phi_1)_{j+1/2}^- - \beta_2 w_D(x, y_{j-1/2}) (\phi_1)_{j-1/2}^+ \right] \,\mathrm{d}x,
		\\
		\tilde{B}_{i+1/2, j+1/2}(\bm{w},\phi_2)
		&= \frac{1}{\tau_{\max}} \int_{C_{i+1/2, j+1/2}} (w_C - w_D) \phi_2 \,\mathrm{d}x\mathrm{d}y
		+ \int_{C_{i+1/2, j+1/2}} w_C \boldsymbol{\beta} \cdot \nabla \phi_2 \,\mathrm{d}x\mathrm{d}y \notag \\
		&\quad - \int_{J_{j+1/2}} \left[ \beta_1 w_C(x_{i+1}, y) (\phi_2)_{i+1}^- - \beta_1 w_C(x_i, y) (\phi_2)_i^+ \right] \,\mathrm{d}y \notag \\
		&\quad - \int_{I_{i+1/2}} \left[ \beta_2 w_C(x, y_{j+1}) (\phi_2)_{j+1}^- - \beta_2 w_C(x, y_j) (\phi_2)_j^+ \right] \,\mathrm{d}x.
	\end{align}
\end{subequations}
Here, $\tau_{\max} = \frac{C_{\rm CFL}}{1/h_x + 1/h_y}$. The interface traces at cell boundaries are defined as
\[
(w_C)_{{i+1/2}}^\pm := \lim_{\epsilon \to 0^+} w_C(x_{i+1/2} \pm \epsilon, \cdot), \quad
(w_C)_{{j+1/2}}^\pm := \lim_{\epsilon \to 0^+} w_C(\cdot, y_{j+1/2} \pm \epsilon),
\]
and analogously for \( (w_D)_{i}^\pm \) and \( (w_D)_{j}^\pm \).

\subsection{Existing properties of the CDG method}

The CDG method retains important properties from classical DG methods, including $L^2$-stability, weak boundedness, and optimal convergence. We begin by recalling the stability result of Liu \textit{et al.}~\cite{liu2008l2}. Throughout the paper, for \(\bm w=(w_C,w_D)\), we set
\[
\|\bm w\|_K^2:=\int_K\bigl(w_C^2+w_D^2\bigr)\,dx,
\qquad
\|\bm w\|_\Gamma^2:=\sum_{K\in\Gamma_h^C}\|w_C\|_{\partial K}^2+\sum_{K\in\Gamma_h^D}\|w_D\|_{\partial K}^2.
\]

\begin{proposition}[{\cite[Theorem~2.1]{liu2008l2}}]\label[proposition]{prop:L2stable}
	The solution \( \bm{u}_h = (u_h^C, u_h^D) \in V_h \times W_h \) for \eqref{equ:conservation_law} of the semidiscrete CDG method defined by \eqref{equ:semiCDG} satisfies
	\[
	\frac{d}{dt} \|\bm{u}_h(t)\|^2 = -\frac{2}{\tau_{\max}} \|u_h^C - u_h^D\|^2 \leq 0.
	\]
\end{proposition}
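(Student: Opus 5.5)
The plan is an energy argument: take \(\phi_1=u_h^C\) on each primal cell and \(\phi_2=u_h^D\) on each dual cell in \eqref{equ:semiCDG}, and sum over all cells of both meshes. The left-hand sides sum to \(\tfrac12\frac{d}{dt}\bigl(\|u_h^C\|^2+\|u_h^D\|^2\bigr)=\tfrac12\frac{d}{dt}\|\bm u_h\|^2\). It then suffices to show that the sum of the right-hand sides equals \(-\frac{1}{\tau_{\max}}\|u_h^C-u_h^D\|^2\).

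\textbf{Coupling terms.} Summed over the primal cells, the first term of \(\hat B\) gives \(\frac1{\tau_{\max}}\int_\Omega (u_h^D-u_h^C)u_h^C\). Summed over the dual cells, the first term of \(\tilde B\) gives \(\frac1{\tau_{\max}}\int_\Omega (u_h^C-u_h^D)u_h^D\). Both meshes tile \(\Omega\), so these add to \(-\frac1{\tau_{\max}}\int_\Omega (u_h^C-u_h^D)^2\).

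\textbf{Advection terms in 1D.} I will show these cancel exactly. On a primal cell \(I_i\) the function \(u_h^D\) is smooth except at the single interior node \(x_i\). On a dual cell \(I_{i+1/2}\) the function \(u_h^C\) is smooth except at \(x_{i+1/2}\). Split each cell integral at that interior node and integrate by parts on each half, for instance
\[
\int_{I_i} u_h^D\,\partial_x u_h^C\,dx=-\int_{I_i}\partial_x u_h^D\,u_h^C\,dx+\text{boundary terms}+\bigl[(u_h^D)^- - (u_h^D)^+\bigr]_{x_i}\,u_h^C(x_i),
\]
where the last jump term arises because \(u_h^C\) is continuous at \(x_i\).

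Summing over all cells, \(\int_\Omega u_h^D\partial_x u_h^C\) (piecewise) cancels against \(\int_\Omega u_h^C\partial_x u_h^D\) (piecewise) after this integration by parts. What remains are point terms at two kinds of nodes:
\begin{itemize}
\item at the primal interfaces \(x_{i\pm1/2}\), where \(u_h^D\) is continuous;
\item at the dual interfaces \(x_i\), where \(u_h^C\) is continuous.
\end{itemize}

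At a node \(x_{i+1/2}\), the flux terms from \(\hat B\) give \(-(u_h^D)_{i+1/2}\bigl[(u_h^C)^--(u_h^C)^+\bigr]\). The interior-split integration by parts in \(I_{i+1/2}\) gives \(+(u_h^D)_{i+1/2}\bigl[(u_h^C)^--(u_h^C)^+\bigr]\), so the two cancel. The nodes \(x_i\) cancel symmetrically. Hence the total advection contribution is zero.

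\textbf{Main obstacle: 2D and beyond.} The main obstacle is the bookkeeping in 2D and higher, where the edge integrals involve products of traces and each cell of one mesh is cut into \(2^d\) subcells by the other mesh. I would handle it by integrating by parts on the subcells \(C_{i,j}\cap C_{i+1/2,j+1/2}\), on which both \(u_h^C\) and \(u_h^D\) are polynomials. Equivalently, I can use the identity
\[
\sum_{\text{subcells}}\int \boldsymbol\beta\cdot\nabla(u_h^Cu_h^D)=\sum_{\text{subcell faces}}\text{(flux)},
\]
whose right-hand side telescopes under periodicity. Each subcell face lies either on a primal edge (where \(u_h^D\) is continuous) or on a dual edge (where \(u_h^C\) is continuous). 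The face contributions then match the boundary terms in \eqref{eq:2D_spatial} exactly, direction by direction, as in the 1D case.

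Combining the two parts gives \(\frac12\frac{d}{dt}\|\bm u_h\|^2=-\frac1{\tau_{\max}}\|u_h^C-u_h^D\|^2\). Multiplying by 2 yields the stated identity, and nonnegativity of the norm gives \(\le 0\).
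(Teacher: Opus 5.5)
Your proposal is correct and is essentially the standard energy argument behind the cited result; the paper does not reprove the proposition and only quotes Liu et al. Testing with $\bm\phi=\bm u_h$, the coupling terms give $-\tau_{\max}^{-1}\|u_h^C-u_h^D\|^2$, and the advection terms vanish because the broken volume integrals combine into $\int\boldsymbol\beta\cdot\nabla(u_h^Cu_h^D)$ over the subcells, whose face jumps (using continuity of $u_h^D$ across primal faces and of $u_h^C$ across dual faces) exactly cancel the edge terms of $\hat B$ and $\tilde B$; the subcell product-rule form you give for 2D is also the cleanest way to organize your 1D nodal bookkeeping, which as written switches between integrating by parts on the primal and on the dual mesh, so that no nodal term is counted twice.
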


The CDG spatial operator satisfies the weak boundedness property stated in \cref{lem:weaklybdd}.

\begin{lemma}[{\cite[Lemma~3.3]{peng2025oscillation}}]\label[lemma]{lem:weaklybdd}
	For any \(\bm{w}, \bm{\phi}\in V_h\times W_h\),
	\begin{equation}\label{eq:H_bounded}
		\biggl|\sum_{C\in\Gamma_h^C}\hat{B}_C(\bm{w},\phi_1)\biggr| \lesssim\,h^{-1}\,\|\bm{w}\|\,\|\phi_1\|,
		\quad
		\biggl|\sum_{D\in\Gamma_h^D}\tilde{B}_D(\bm{w},\phi_2)\biggr| \lesssim\,h^{-1}\,\|\bm{w}\|\,\|\phi_2\|.
	\end{equation}
\end{lemma}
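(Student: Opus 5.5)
The plan is to bound each of the three groups of terms in $\hat B_C$ (and symmetrically in $\tilde B_D$) separately, using only inverse and trace inequalities for piecewise $\mathbb{Q}^k$ functions on the uniform (quasi-uniform) Cartesian meshes. Afterwards I will sum over cells with the Cauchy--Schwarz inequality. It suffices to treat $\hat B$; the dual operator is identical with the roles of the two meshes exchanged. I work in 1D first and then indicate the modifications in 2D.

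\textbf{Coupling term.} Since $\tau_{\max}=C_{\rm CFL}h$ (or $\tau_{\max}\simeq h$ in 2D), Cauchy--Schwarz gives
\[
\frac{1}{\tau_{\max}}\Bigl|\sum_i\int_{I_i}(w_D-w_C)\phi_1\,dx\Bigr|\le \frac{1}{\tau_{\max}}\|w_D-w_C\|\,\|\phi_1\|\lesssim h^{-1}\|\bm w\|\,\|\phi_1\|.
\]

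\textbf{Volume term.} The function $\phi_1|_{I_i}$ is a polynomial of degree $k$, so the inverse inequality gives $\|\partial_x\phi_1\|_{I_i}\lesssim h^{-1}\|\phi_1\|_{I_i}$. Then
\[
\Bigl|\int_{I_i}w_D\partial_x\phi_1\Bigr|\le \|w_D\|_{I_i}\|\partial_x\phi_1\|_{I_i}\lesssim h^{-1}\|w_D\|_{I_i}\|\phi_1\|_{I_i}.
\]
Here $\|w_D\|_{I_i}$ is the $L^2$ norm over $I_i$ of a function that is piecewise polynomial on the two dual halves $[x_{i-1/2},x_i]$ and $[x_i,x_{i+1/2}]$. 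Summing over $i$ with discrete Cauchy--Schwarz yields at most $h^{-1}\|w_D\|\,\|\phi_1\|$.

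\textbf{Interface terms.} This is the step needing slight care: $w_D$ is evaluated at $x_{i\pm1/2}$, which lies in the interior of the dual cell $I_{i\pm1/2-1/2}$, so there is no jump ambiguity. A pointwise inverse inequality on that dual cell gives $|w_D(x_{i+1/2})|\le\|w_D\|_{L^\infty(I_{i})}\lesssim h^{-1/2}\|w_D\|_{I_{i}}$, where $I_i$ here denotes the relevant dual cell. Similarly $|(\phi_1)^\pm_{i\pm1/2}|\lesssim h^{-1/2}\|\phi_1\|_{I_i}$. Each boundary product is therefore bounded by $h^{-1}\|w_D\|_{D}\|\phi_1\|_{I_i}$, where $D$ is one of the two dual cells touching $I_i$. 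Each dual cell is counted a bounded number of times, so summing with Cauchy--Schwarz again gives $\lesssim h^{-1}\|w_D\|\,\|\phi_1\|$.

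\textbf{Two dimensions.} The edge integrals are handled the same way with the tensor-product trace inequality
\[
\|v\|_{L^2(e)}\lesssim h^{-1/2}\|v\|_{K},
\]
valid for $v\in\mathbb{Q}^k$ and an edge $e$ of a cell $K$. For the dual function, the edge $\{x_{i+1/2}\}\times J_j$ is split into two pieces, each lying inside a dual cell, and the same estimate is applied on each sub-cell after restricting to the appropriate half. The inverse inequality $\|\nabla\phi_1\|_K\lesssim h^{-1}\|\phi_1\|_K$ then uses quasi-uniformity, $h_x\simeq h_y\simeq h$. Collecting the three bounds proves \eqref{eq:H_bounded}.
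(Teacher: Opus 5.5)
Your argument is correct and is the standard one. The paper does not reprove this lemma but cites it from \cite[Lemma~3.3]{peng2025oscillation}, and the bound follows as you derive it: Cauchy--Schwarz for the $\tau_{\max}^{-1}$ coupling term, the inverse inequality for $\partial_x\phi_1$ (or $\nabla\phi_1$, using $h_x\simeq h_y\simeq h$), and $L^\infty$--$L^2$/trace inverse estimates for the interface terms, using that $x_{i\pm1/2}$ lies inside a dual cell so $w_D$ is single-valued there.

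One cosmetic fix: the dual cell containing $x_{i+1/2}$ is $I_{i+1/2}=[x_i,x_{i+1}]$ (and $x_{i-1/2}$ lies in $I_{i-1/2}$), not ``$I_{i\pm1/2-1/2}$''. With that label the interface estimate reads $|(w_D)_{i+1/2}(\phi_1)^-_{i+1/2}|\lesssim h^{-1}\|w_D\|_{I_{i+1/2}}\|\phi_1\|_{I_i}$, and your bounded-overlap summation goes through unchanged.
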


For the linear advection equation \eqref{equ:conservation_law},  the semidiscrete CDG scheme achieves the optimal \(L^{2}\) convergence order under standard smoothness assumptions, provided that the initial data are projected with accuracy \(\mathcal{O}(h^{k+1})\).
This condition is satisfied by the 1D projection defined in \cref{def:projection} and by its tensor-product extension in \cref{def:projection2D}.
While our 2D tensor-product projection is not identical to the choice adopted in \cite{liu2018optimal}, the same optimal error estimate can be derived from Theorem~3.1 in \cite{liu2018optimal} provided the initial error is of order $\mathcal{O}(h^{k+1})$.
\begin{theorem}[Theorem 3.1 in \cite{liu2018optimal}]\label[theorem]{thm:opt_estimate}
	Let \(\bm{u}_h\) be the \(\mathbb{Q}^k\)-based CDG solution of \eqref{equ:conservation_law}. Assume the exact solution $u(\cdot,t)\in H^{k+2}(\Omega)$ and the initial error satisfies
	\[
	\|\bm{u}_h(\cdot,0)-\bm{u}(\cdot,0)\|\lesssim h^{k+1},
	\]
	where \(\bm{u}(\bm{x},t):=(u(\bm{x},t),u(\bm{x},t))\). Then for all \(t\in[0,T]\),
	\[
	\|\bm{u}_h(\cdot,t)-\bm{u}(\cdot,t)\|\lesssim h^{k+1}.
	\]
\end{theorem}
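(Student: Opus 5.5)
The plan is the standard projection-based energy argument: split the error through a projection adapted to the CDG operator, and show that the projection error produces a residual of order $h^{k+1}$ in the discrete operator. Write $\bm e=\bm u_h-\bm u=\bm\xi-\bm\eta$ with
\[
\bm\xi:=\bm u_h-\bm P\bm u\in V_h\times W_h,\qquad \bm\eta:=\bm u-\bm P\bm u ,
\]
where $\bm P=(P_C,P_D)$ is the (tensor-product) LSZ-type projection onto $V_h\times W_h$ from \cref{def:projection} and \cref{def:projection2D}. The approximation properties of $\bm P$ give $\|\bm\eta\|+\|\partial_t\bm\eta\|\lesssim h^{k+1}$. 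The exact pair $\bm u=(u,u)$ satisfies \eqref{equ:semiCDG}: the coupling term vanishes because $u-u=0$, and the remaining terms are the integrated-by-parts weak form. Subtracting this consistency identity from the scheme yields the error equation
\[
(\partial_t\xi_C,\phi_1)_C+(\partial_t\xi_D,\phi_2)_D=\hat B_C(\bm\xi,\phi_1)+\tilde B_D(\bm\xi,\phi_2)+(\partial_t\eta_C,\phi_1)_C+(\partial_t\eta_D,\phi_2)_D-\hat B_C(\bm\eta,\phi_1)-\tilde B_D(\bm\eta,\phi_2).
\]

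Take $\bm\phi=\bm\xi$ and sum over all cells. The operator identity behind \cref{prop:L2stable} gives
\[
\sum_C\hat B_C(\bm\xi,\xi_C)+\sum_D\tilde B_D(\bm\xi,\xi_D)=-\tfrac1{\tau_{\max}}\|\xi_C-\xi_D\|^2\le0 .
\]
The time-derivative terms are bounded by $h^{k+1}\|\bm\xi\|$. It therefore remains to control $\sum_C\hat B_C(\bm\eta,\xi_C)+\sum_D\tilde B_D(\bm\eta,\xi_D)$.

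This is the main obstacle. A crude application of \cref{lem:weaklybdd} gives only $h^{-1}h^{k+1}\|\bm\xi\|=h^k\|\bm\xi\|$, which loses one order. The key step is the defining property of the LSZ projection: $\bm P$ is built so that the projection error is nearly orthogonal to the CDG operator, i.e.
\[
\Bigl|\sum_C\hat B_C(\bm\eta,\phi_1)+\sum_D\tilde B_D(\bm\eta,\phi_2)\Bigr|\lesssim h^{k+1}\|u\|_{H^{k+2}}\|\bm\phi\| .
\]
In 1D I would obtain this by taking $P_C,P_D$ to satisfy the Radau/collocation-type conditions used in \cite{liu2018optimal}. Under these conditions the volume integral $\int\eta_D\partial_x\phi_1$ and the trace terms cancel exactly for $\phi_1\in\mathbb P^{k}$. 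The $\frac1{\tau_{\max}}(\eta_D-\eta_C)$ coupling term, which is only $O(h^{k})$ after division by $\tau_{\max}\sim h$, must be handled by showing that $P_C u-P_Du$ agrees with $u$ up to $O(h^{k+2})$ in the relevant weak sense; one more order is gained by a Bramble--Hilbert argument on the uniform mesh, which is why $u\in H^{k+2}$ is needed. In multiple dimensions I would reproduce this cancellation direction by direction, using the tensor-product structure of $\bm P$. The mixed terms produced by the non-commuting directional projections are then bounded by $h^{k+1}|u|_{H^{k+2}}$ through the standard commutator estimate for tensor projections.

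With the residual bound in hand, we obtain $\frac{d}{dt}\|\bm\xi\|^2\lesssim h^{k+1}\|\bm\xi\|$, hence $\frac{d}{dt}\|\bm\xi\|\lesssim h^{k+1}$. Gronwall's inequality then gives $\|\bm\xi(t)\|\le\|\bm\xi(0)\|+Th^{k+1}$. We also have $\|\bm\xi(0)\|\le\|\bm e(0)\|+\|\bm\eta(0)\|\lesssim h^{k+1}$ by assumption. The triangle inequality $\|\bm e\|\le\|\bm\xi\|+\|\bm\eta\|$ then concludes the proof.
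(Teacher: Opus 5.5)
Your energy skeleton is the right one. The paper takes this theorem from \cite{liu2018optimal} and records its key ingredient as \cref{lem:supercon_project} and \cref{lem:superconpro2D}. The skeleton consists of consistency of $(u,u)$, the identity $H(\bm\xi,\bm\xi)=-\tau_{\max}^{-1}\|\xi_C-\xi_D\|^2$ behind \cref{prop:L2stable}, an $O(h^{k+1})$ bound for the projection residual, and Gr\"onwall. The gap is in your mechanism for that residual bound, which is the entire content of the theorem. The projection of \cref{def:projection} is not of Radau/collocation type. For it, neither the transport part $\int_{I_i}\eta_D\partial_x\phi_1-(\eta_D)_{i+1/2}(\phi_1)^-_{i+1/2}+(\eta_D)_{i-1/2}(\phi_1)^+_{i-1/2}$ nor the coupling part $\tau_{\max}^{-1}\int_{I_i}(\eta_D-\eta_C)\phi_1$ is small on its own. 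Take $k=1$, $u=(x-x_i)^2$ and $\phi_1=2(x-x_i)/h$. Then $P_C^\ast u=\tfrac{h^2}{12}-\tfrac{2C_{\rm CFL}h}{3}(x-x_i)$, and the projection error at the dual-cell centres is $-h^2/12\neq 0$. The transport part equals $h^2/6$ and the coupling part equals $-h^2/6$, for every $C_{\rm CFL}$. For a smooth $u$ tested with $\phi_1|_{I_i}=u''(x_i)\,2(x-x_i)/h$, each part is therefore of exact order $h\|\phi_1\|=h^{k}\|\phi_1\|$ after summation; only their sum is $O(h^{k+1})$. So the coupling functional does not annihilate $\mathbb P^{k+1}$, and Bramble--Hilbert cannot give it the extra order. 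Designing a different $P_D$ that kills the transport part exactly does not help either. Those conditions couple neighbouring dual cells (for $\phi_1=1$ they force $\eta_D(x_{i+1/2})=\eta_D(x_{i-1/2})$ for all $i$). They also use up all $k+1$ degrees of freedom per cell, leaving nothing to push $\tau_{\max}^{-1}\int_{I_i}(P_Cu-P_Du)\phi_1$ below $h^{k}\|\phi_1\|$.

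The missing idea is that the LSZ projection never separates the two parts. Its nontrivial conditions, $\widehat P_i(\eta_C;\phi)=0$, impose the full coupling-plus-transport functional, with $\eta_D$ on the two half-cells of $I_i$ replaced by the half-cell shifts of $\eta_C$. By \cref{lem:shifting} and linearity, $\eta_D(x)=\eta_C(x\pm h/2)+(I-P_D^\ast)(\Delta_x^\pm u)(x)$ on the corresponding half of $I_i$. Hence $\hat B_i(\bm\eta,\phi_1)=\widehat P_i(\eta_C;\phi_1)+\widehat{\mathcal T}_i(\phi_1;\bm\eta)=\widehat{\mathcal T}_i(\phi_1;\bm\eta)$. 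Then \cref{coro:overlap} gives $|\hat B_i(\bm\eta,\phi_1)|\lesssim h^{-1}\|(I-P_D^\ast)\Delta_x^\pm u\|_i\|\phi_1\|_i\lesssim h^{k+1}\|u\|_{H^{k+2}}\|\phi_1\|_i$. The factor $h^{-1}$ from $\tau_{\max}$ and the inverse inequality is paid for by $\|\Delta_x^\pm u\|_{H^{k+1}}\lesssim h\|u\|_{H^{k+2}}$. That, not Bramble--Hilbert on the coupling term, is where $H^{k+2}$ enters. In 2D the paper applies this 1D identity in $y$ to $(\bm I-\bm P^y)u$, which is continuous in $x$, and in $x$ to $(\bm I-\bm P^x)u$. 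This leaves transverse volume terms such as $\int_{C_{i,j}}\partial_x[(I-P_D^y)u]\,\phi_1$, which are bounded by $\|(\bm I-\bm P^y)\partial_xu\|$. The mixed error $(\bm I-\bm P^x)(\bm I-\bm P^y)u$ is $O(h^{k+2})$ in $L^2$ and is absorbed by \cref{lem:weaklybdd}. The directional projections commute, so there is no commutator to estimate; what your 2D sketch misses is the transverse term.
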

The optimal-error theorem sets the baseline $L^2$ estimate, but it does not explain the pointwise or cell-average superconvergence. The next section therefore studies the corrected residual and identifies the cancellation that remains after primal-dual coupling removes the full Galerkin orthogonality available in standard DG analyses.

Finally, to facilitate the construction of correction functions (see \cref{def:correctfunc} and \cref{def:correctfun2D}), we introduce the orthogonal decomposition in each direction. For the $x$-direction, we set
\[
\begin{aligned}
	V_x^0 \times W_x^0
	&:= \Bigl\{\bm{v}=(v_1,v_2)\in V_h\times W_h:
	\ v_1(\cdot,y)\in \hat{\mathbb{P}}_{x}^{k}(I_i),\ 
	v_2(\cdot,y)\in \hat{\mathbb{P}}_{x}^{k}(I_{i+1/2})\Bigr\},\\[2mm]
	\overline{V}_x \times \overline{W}_x
	&:= \Bigl\{\bm{v}=(v_1,v_2)\in V_h\times W_h:
	\ v_1(\cdot,y)\in \mathbb{P}_{x}^{0}(I_i),\ 
	v_2(\cdot,y)\in \mathbb{P}_{x}^{0}(I_{i+1/2})\Bigr\},
\end{aligned}
\]
where
$
\hat{\mathbb{P}}_{x}^{k}(I) := \Bigl\{p\in\mathbb{P}^k(I):\int_I p\,\mathrm{d}x=0\Bigr\}$ and 
$\mathbb{P}_{x}^{0}(I) := \mathbb{P}^0(I).
$
Analogous definitions apply in the \(y\)-direction by replacing \(x\), \(I_i\), and \(I_{i+1/2}\) with \(y\), \(J_j\), and \(J_{j+1/2}\), respectively.

\section{1D superconvergent analysis}\label{sec:1Danalysis}
This section proves the 1D superconvergent estimate obtained from the corrected-error framework. The correction functions are introduced in the error equation to cancel the leading projector-induced primal-dual residuals and to expose the constant-test residual that can be estimated sharply. The corrected initialization then reveals the desired rate via the Gr\"onwall inequality. We first state the theorem and its corollary, and then introduce the projector $\bm{P}^*u$ (\Cref{def:projection}) and the $l$-th correction function $\bm{R}^l u$ (\Cref{def:correctfunc}). Throughout this section, the uncorrected and corrected errors are
\[
\bm{\zeta}:=\bm{u}_h-\bm{P}^*u,
\qquad
\hat{\bm{\zeta}}:=\bm{\zeta}+\sum_{l=1}^{\min\{k,2\}}\bm{R}^l u.
\]
\begin{theorem}[Corrected-initialization superconvergence of the 1D semidiscrete CDG method]
	\label[theorem]{thm:supercon1D}
	Let $k \ge 1$ and $\bm{u}_h$ be the semidiscrete CDG approximation  of \eqref{equ:conservation_law}. Assume $u \in C^1([0,T]; {H^{\sigma_k+1}(\Omega)})$ with $T>0$, where $\sigma_k= \min\{2k+1, k+3\}$. If the initial corrected error satisfies $\|\hat{\bm{\zeta}}(\cdot, 0)\| \lesssim h^{\sigma_k}$, then
	\begin{equation}
		\|\hat{\bm{\zeta}}(\cdot, T)\| \lesssim h^{\sigma_k}.
	\end{equation}
\end{theorem}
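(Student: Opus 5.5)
The strategy is an energy estimate on the corrected error equation, closed with Gr\"onwall's inequality, in the spirit of \cref{prop:L2stable}. Write $\bm{B}(\bm w,\bm\phi):=\sum_i\hat B_i(\bm w,\phi_1)+\sum_i\tilde B_{i+1/2}(\bm w,\phi_2)$. The exact pair $\bm u=(u,u)$ satisfies the scheme identically: the coupling term vanishes because $w_D-w_C=0$, and the advective terms reproduce $-\partial_x u$ after integration by parts. Subtracting the scheme for $\bm u_h$ therefore gives, for all $\bm\phi\in V_h\times W_h$,
\[
(\partial_t\hat{\bm\zeta},\bm\phi)-\bm B(\hat{\bm\zeta},\bm\phi)
=\bigl(\partial_t(\bm u-\bm P^*u),\bm\phi\bigr)-\bm B(\bm u-\bm P^*u,\bm\phi)+\sum_{l=1}^{\min\{k,2\}}\Bigl[(\partial_t\bm R^lu,\bm\phi)-\bm B(\bm R^lu,\bm\phi)\Bigr]=:\mathcal E(\bm\phi).
\]
Take $\bm\phi=\hat{\bm\zeta}$. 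The stability identity underlying \cref{prop:L2stable} gives $\bm B(\hat{\bm\zeta},\hat{\bm\zeta})=-\tau_{\max}^{-1}\|\hat\zeta_C-\hat\zeta_D\|^2\le0$. It then suffices to prove $|\mathcal E(\hat{\bm\zeta})|\lesssim h^{\sigma_k}\|\hat{\bm\zeta}\|$, which yields $\tfrac{d}{dt}\|\hat{\bm\zeta}\|\lesssim h^{\sigma_k}$. Gr\"onwall's inequality, together with the assumed initial bound $\|\hat{\bm\zeta}(\cdot,0)\|\lesssim h^{\sigma_k}$, then gives the theorem. Note that the time-derivative terms commute with $\bm P^*$ and $\bm R^l$, which is why $u\in C^1([0,T];H^{\sigma_k+1})$ is needed.

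To bound $\mathcal E$, split the test function via the orthogonal decomposition $\bm\phi=\bar{\bm\phi}+\bm\phi^0$ into $\overline V_x\times\overline W_x$ and $V_x^0\times W_x^0$. The correction functions of \cref{def:correctfunc} will be built recursively so that they absorb the zero-mean part:
\[
\bm B(\bm R^1u,\bm\phi^0)=(\partial_t(\bm u-\bm P^*u),\bm\phi^0)-\bm B(\bm u-\bm P^*u,\bm\phi^0),\qquad
\bm B(\bm R^{l+1}u,\bm\phi^0)=(\partial_t\bm R^lu,\bm\phi^0).
\]
Each $\bm R^lu$ should have zero cell averages and size $\|\bm R^lu\|\lesssim h^{k+1+l}|u|_{k+1+l}$. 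These relations are solvable cell by cell because the defining property of $\bm P^*$ (\cref{def:projection}) makes the residual $\bm B(\bm u-\bm P^*u,\cdot)$ of high order, and because $\bm B$ restricted to zero-mean test functions is locally invertible: $\int w\,\partial_x\phi^0$ is nondegenerate on zero-mean $\phi^0$, with an $h^{-1}$ scaling that gains one power of $h$ per step. After these cancellations, the zero-mean part of $\mathcal E$ reduces to the last time-derivative term $(\partial_t\bm R^{L}u,\bm\phi^0)$ with $L=\min\{k,2\}$. Its size is $h^{k+1+L}=h^{\min\{2k+1,k+3\}}=h^{\sigma_k}$, which explains $\sigma_k$: the recursion depth is limited both by polynomial degree (there are only $k$ zero-mean modes) and by two corrections sufficing.

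The main obstacle is the cell-average part, which I will call the principal residual cancellation. For $\bar{\bm\phi}$ piecewise constant, $\bm B(\bm w,\bar{\bm\phi})$ reduces to the coupling term $\tau_{\max}^{-1}\int(w_D-w_C)\bar\phi_1$ plus jump terms. These involve only point values of $w_D$ at primal interfaces and of $w_C$ at dual interfaces, the latter being cell centers of the primal mesh. So the required estimate is a bound on $\tfrac1h\int_{I_i}(\cdot)$ and on the point values of $\bm u-\bm P^*u+\sum_l\bm R^lu$ at the mesh nodes, of order $h^{\sigma_k}$ after summation. Moreover, the time derivative of $\bm R^lu$ has zero mean and drops out on $\bar{\bm\phi}$.

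I would handle this in three steps. First, expand $u$ locally in Legendre/Taylor series. Second, use the explicit LSZ construction to show that the point values of $u-P^*u$ at $x_i$ and $x_{i+1/2}$ vanish up to high order. Third, show that the remaining mismatch cancels asymptotically when summed against $\bar\phi_1(x_{i+1/2})-\bar\phi_1(x_{i-1/2})$-type differences. This is a summation-by-parts step that trades one factor $h$ for a derivative of $u$, the weak cancellation mentioned in the introduction. If the pure 1D algebra does not close, I would instead modify $\bm R^l$ by adding a cell-constant component chosen to kill the leading node-value term, accepting $\|\cdot\|$ of the same order.
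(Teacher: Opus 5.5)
Your energy/Gr\"onwall frame and the telescoping of the zero-mean residual are fine in outline. The argument does not close, however, on piecewise-constant test functions, which you rightly call the crux, because the mechanism you propose there is false.

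The nodal values entering $\hat B_i(\bm\eta,1)$ are not of high order. These are $\eta_D$ at the dual-cell centres $x_{i\pm1/2}$, and $\eta_C$ at $x_i$ for the dual operator. Already for $k=1$ the LSZ relations give $(x-x_i)^2-P_C^*(x-x_i)^2=\frac{h^2}{4}\bigl(s^2-\frac13+\frac{4\tau_{\max}}{3h}s\bigr)$ with $s=2(x-x_i)/h$. Hence $\eta_C(x_i)=-\frac{h^2}{24}\partial_x^2u(x_i)+\mathcal O(h^3)$, and the half-cell coupling integrals $\tau_{\max}^{-1}\int\eta_D$ are equally large.

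Since $\sum_i|\bar\phi_i|\lesssim h^{-1}\|\bar\phi\|$, a cellwise argument needs $|\hat B_i(\bm\eta,1)|\lesssim h^{\sigma_k+1}$. One difference in $u$ only brings the raw terms down to $h^{k+2}$. Summation by parts onto $\bar\phi_{i+1}-\bar\phi_i$ gains nothing, because $\bar\phi$ is the cell-average part of $\hat{\bm\zeta}$ and its jumps are not small.

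The fallback fails too. $\bm B$ annihilates $(1,1)$ and, on smooth cell-average profiles, reduces to a consistent approximation of $-\partial_x$ (the transport mode), so inverting it there gains no power of $h$. A cell-constant correction would therefore be as large as the residual it removes. It would also reintroduce $(\partial_t\bm c,\bar{\bm\phi})$ and change $\hat{\bm\zeta}$.

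The missing idea is an exact identity, valid for $\bm w$ with zero cell averages:
\[
\hat B_i(\bm w,1)=\tfrac12\bigl[\widetilde P_{i+1/2}(w_D;\tilde\phi_{i+1/2})-\widetilde P_{i-1/2}(w_D;\tilde\phi_{i-1/2})\bigr],\qquad \tilde\phi_{i\pm1/2}=\tfrac{2(x-x_{i\pm1/2})}{h}.
\]
For $\bm w=\bm\eta$ the LSZ relation kills the right side, so $(\eta_D)_{i+1/2}=\tau_{\max}^{-1}\int_{x_i}^{x_{i+1/2}}\eta_D$ holds exactly. The $\mathcal O(h^{k+1})$ nodal values cancel against the coupling term, and $\hat B_i(\bm\eta,1)=0$ (\Cref{prop:testfuncest}).

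For $\bm R^1u$ the right side is supplied by the defining relation of the correction. The shift structure, together with the quadratic test function $\hat\phi_{i+1/2}$ (available only for $k\ge2$), turns it into second-order consistency errors of difference quotients of $u$, giving $h^{k+3}$ (\Cref{prop:correct_testfuncest}). It is this cancellation order, not the number of zero-mean modes, that caps the rate at $\sigma_k$.

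There is a second gap. You define the corrections through the global form $\bm B$ and claim cell-by-cell solvability, but $\hat B_i(\bm w,\phi_1)$ involves $w_D$ on both dual cells overlapping $I_i$. Its derivative term acts on $w_D$, and $w_C$ enters only through the $\tau_{\max}^{-1}$ mass term. The resulting system couples primal and dual unknowns around the whole periodic mesh. Existence of the corrections and the gain of one power of $h$ per level therefore need a separate global (e.g.\ Fourier) argument.

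The paper avoids this by defining $\bm R^l$ through the local LSZ forms $\widehat P_i$, $\widetilde P_{i+1/2}$, which are uniformly invertible on zero-mean polynomials (\Cref{lem:projectlem}). It then treats $\hat B_i-\widehat P_i=\widehat{\mathcal T}_i$ as higher order through the finite-difference structure (\Cref{coro:overlap}). This also matters because the theorem is stated for $\hat{\bm\zeta}$ built from those specific corrections.
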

The proof of \cref{thm:supercon1D} is organized around a projection--correction construction. Its key ingredient is the asymptotic weak cancellation structure of projection or correction under the overlapping-mesh CDG operator. We divide the argument into four steps; Step 3 is where the present proof departs from both the CDG optimal $L^2$ analysis and the standard DG superconvergent argument.
\begin{description}
	\item[Step 1:] As in the standard analysis, we derive the local error equation from \eqref{equ:semiCDG} and define the error decomposition
	\begin{equation}\label{equ:deferror}
		\bm{e} := \bm{u}_h - \bm{u}, \qquad
		\bm{\eta} := \bm{u} - \bm{P}^\ast u.
	\end{equation}
	
	\item[Step 2:] We construct the correction functions \(\bm{R}^l u = (R_C^l u, R_D^l u) \in V_x^0 \times W_x^0\) (see \cref{def:correctfunc}) for \(1\leq l\leq \min\{k,2\}\), and prove the bound
	\(
	\|\bm{R}^l u\| \lesssim h^{k+1+l}
	\) 
	(\cref{prop:correctestimate}). 
	
	\item[Step 3:] We rewrite the error equation in terms of the corrected errors \(\hat{\bm{\zeta}}\) and
	\(
	\hat{\bm{\eta}} := \sum_{l=0}^{\min\{k,2\}} \bm{R}^l u.
	\)
	These correction functions capture the primal and dual projection residuals of the CDG spatial operator for nonconstant test functions. Because the correction operators \(\hat{P}_i\) in \eqref{equ:defproject} and \(\tilde{P}_{i+\frac12}\) vanish for locally constant test functions (see \Cref{rem:constanttest}), we split the test space as \(V_x^0\oplus\overline V_x\). We then exploit the (asymptotic) weak cancellation for the piecewise-constant component in \(\mathcal C_i^2(\phi_1)\), applying \Cref{prop:testfuncest} to the constant part \(\phi_1\in\overline V_x\), and bound the remaining term through the residual operator \(\widehat{\mathcal T}_i\) in \eqref{equ:defT} via \Cref{prop:correct_testfuncest}. The remaining correction terms are of higher order due to the difference structure established in \Cref{coro:overlap}.
	
	\item[Step 4:] Combining the preceding estimates with the stability property in \cref{prop:L2stable}, we obtain
	\(
	\frac{d}{dt}\|\hat{\bm{\zeta}}\|^2 \lesssim h^{\sigma_k}\|\hat{\bm{\zeta}}\|,
	\)
	and the desired superconvergent estimate follows from Gr\"onwall's inequality.
\end{description}

\begin{corollary}[Cell-average superconvergence in 1D]\label[corollary]{coro:supercon1D}
	Suppose the scheme is initialized with \(\bm u_h(\cdot,0)=\bm P^\ast u(\cdot,0)-\bm R^{1}u(\cdot,0)\), and assume that $u$ satisfies the hypotheses of \cref{thm:supercon1D}. Then
	\[ e_{\mathrm{avg}}:=\left(h\sum_i\left(\frac{1}{h}\int_{I_i}(u-u_h^C)\,dx\right)^2\right)^{1/2}\lesssim h^{\sigma_k}.
	\]
\end{corollary}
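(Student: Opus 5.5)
The plan is to reduce the cell-average error to the corrected error $\hat{\bm\zeta}$ controlled by \cref{thm:supercon1D}. Two facts are needed: the projector $\bm P^\ast u$ preserves cell averages on the primal mesh, and every correction function $\bm R^l u$ has zero cell averages, since $\bm R^l u\in V_x^0\times W_x^0$ by construction (\cref{def:correctfunc}). For the first fact I expect \cref{def:projection} to make $P_C^\ast u$ satisfy $\int_{I_i}(u-P^\ast_C u)\,dx=0$; for instance, it may contain the orthogonality to $\mathbb P^0$ that is standard in Gauss--Radau-type and LSZ-type projectors. If $P^\ast_C$ instead only matches averages up to a higher-order residual, I would bound that residual by $h^{\sigma_k}$ directly.

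With these facts the argument proceeds in three steps.

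\textbf{Step 1: the cell-average error equals the average of $\hat\zeta_C$.} On each $I_i$, split $u-u_h^C=(u-P_C^\ast u)-\zeta_C$, where $\zeta_C=u_h^C-P^\ast_C u$. Next write $\zeta_C=\hat\zeta_C-\sum_{l}R_C^l u$. Since each $R^l_C u$ has zero mean on $I_i$, this gives $\frac1h\int_{I_i}(u-u_h^C)\,dx=-\frac1h\int_{I_i}\hat\zeta_C\,dx$, exactly (or up to the projection residual mentioned above).

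\textbf{Step 2: Cauchy--Schwarz on each cell.} We have $\bigl(\frac1h\int_{I_i}\hat\zeta_C\bigr)^2\le \frac1h\|\hat\zeta_C\|_{I_i}^2$. Multiplying by $h$ and summing over $i$ gives $e_{\mathrm{avg}}\le\|\hat\zeta_C\|\le\|\hat{\bm\zeta}\|$.

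\textbf{Step 3: verify the initialization hypothesis of \cref{thm:supercon1D}.} With $\bm u_h(\cdot,0)=\bm P^\ast u-\bm R^1u$ we have $\bm\zeta(0)=-\bm R^1u(0)$. Hence $\hat{\bm\zeta}(0)=\sum_{l=2}^{\min\{k,2\}}\bm R^lu(0)$, which is zero when $k=1$ and equals $\bm R^2u(0)$ when $k\ge2$. By \cref{prop:correctestimate}, $\|\bm R^2 u\|\lesssim h^{k+3}$. For $k\ge2$ we have $\sigma_k=k+3$, so this is $\lesssim h^{\sigma_k}$. Then \cref{thm:supercon1D} yields $\|\hat{\bm\zeta}(\cdot,T)\|\lesssim h^{\sigma_k}$, and Step 2 gives the claim.

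The main obstacle is confirming that $P^\ast_C$ preserves primal cell averages, because the LSZ-type projector couples the primal and dual meshes. If it does not hold exactly, I would show $|\int_{I_i}(u-P^\ast_C u)|\lesssim h^{1/2}h^{\sigma_k}$ on each cell by a Taylor and symmetry argument, then sum these bounds as in Step 2.
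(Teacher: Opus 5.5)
Your proposal is correct and matches the paper's proof essentially line for line. Like the paper, you use that $P_C^\ast$ preserves primal cell averages and that the corrections have zero cell averages to obtain $e_{\mathrm{avg}}\le\|\hat\zeta_C(\cdot,T)\|$, and then invoke \cref{thm:supercon1D} together with \cref{prop:correctestimate}. Average preservation is exactly the first condition in \cref{def:projection}, so your fallback residual argument is unnecessary. Your Step 3, which shows $\hat{\bm\zeta}(\cdot,0)=\bm R^2u(\cdot,0)$ for $k\ge2$ and $\hat{\bm\zeta}(\cdot,0)=0$ for $k=1$, simply makes explicit the initialization check that the paper leaves implicit.
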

\begin{proof}
	Let \(\zeta_C:=u_h^C-P_C^\ast u\) and
	\(
	\hat{\zeta}_C:=\zeta_C+\sum_{l=1}^{\min\{k,2\}}R_C^l u.
	\)
Since $P_C^\ast$ preserves cell averages and each correction function has zero cell averages,
	\(\int_{I_i}(u-u_h^C)\,dx=-\int_{I_i}\hat{\zeta}_C\,dx\) for all \(I_i.\)
	Therefore,
	\(
	e_{\mathrm{avg}}
	=\Bigg(h\sum_i\Big(\frac1{|I_i|}\int_{I_i}\hat{\zeta}_C\,dx\Big)^2\Bigg)^{1/2}
	\le \|\hat{\zeta}_C(\cdot,T)\|,
	\)
	and the cell-average estimate follows from \cref{thm:supercon1D} together with \cref{prop:correctestimate}.
\end{proof}

\begin{remark}[Order degradation via inverse inequalities]
In 1D, under corrected initialization and for \(k\ge2\), inverse inequalities recover the \(h^{k+2}\) \(\ell^\infty\)-rate at the superconvergent points from the corrected-error estimate in \Cref{thm:supercon1D}. This route, however, is not sufficient when initialized by the LSZ projection, where the corrected error is only of order \(h^{k+2}\) and a direct inverse inequality would lose a factor \(h^{-1/2}\). The LSZ-projection-based pointwise \(\ell^\infty\) superconvergence theorem in \Cref{thm:base_LSZ_maximum} therefore uses a different final step: it controls difference quotients of the sampled pointwise errors and then applies a discrete Sobolev inequality on the set of superconvergent points.
\end{remark}

\begin{remark}[Rate limitation for $k=1$]\label[remark]{rem:pc-bound}
For \(k=1\), the finite element space \(V_h\times W_h\) only supports linear correction functions. Since \Cref{def:correctfunc} involves no quadratic test functions, a second correction level of order \(h^{k+3}\) cannot be constructed for \(\mathbb{P}^1\) elements. As a result, the superconvergence rate for \(k=1\) is limited to that stated in \Cref{coro:supercon1D}.
\end{remark}

\subsection{Projection and its weak cancellation}

In 1D, we adopt the projection operators $P_C^*$ and $P_D^*$ as introduced in~\cite{liu2018optimal}. The projection on the primal mesh $P_C^*$ maps into $V_h$, while the dual projection $P_D^*$ is defined analogously for $W_h$. Although these projections are local and can be defined on the reference cell \([-1,1]\), we formulate them on each physical cell, since this notation makes the overlap shift more transparent and better reveals the difference structure used later.
\begin{definition} \label[definition]{def:projection}
	For \( \omega \in {W^{1,\infty}(\Omega)} \), the projection $P_C^*\omega \in V_h$ satisfies
	\begin{equation}\label{equ:defproject}
		\left\{\begin{aligned}
			\int_{I_i} P_C^* \omega(x)\, \mathrm{d} x &= \int_{I_i} \omega(x)\, \mathrm{d} x,\\
			\widehat{P}_i\left(P_C^* \omega ; \phi\right) &= \widehat{P}_i\left(\omega ; \phi\right), \quad \forall \phi \in \mathbb{P}^{k}(I_i).
		\end{aligned}\right.
	\end{equation}
	Following \cite{liu2018optimal},
	\begin{equation*}
		\resizebox{0.99\hsize}{!}{$
			\begin{aligned}
				\widehat{P}_i(\omega ; \phi) &:= \frac{1}{\tau_{\max}} \left[ \int_{x_{i-1/2}}^{x_i} (\omega(x+h/2)-\omega(x))\phi(x) \,\mathrm{d}x + \int_{x_i}^{x_{i+1/2}} (\omega(x-h/2)-\omega(x))\phi(x) \,\mathrm{d}x \right] \\
				&\quad + \int_{x_{i-1/2}}^{x_i} \omega(x+h/2)\partial_x\phi \,\mathrm{d}x + \int_{x_i}^{x_{i+1/2}} \omega(x-h/2)\partial_x\phi \,\mathrm{d}x \\
				&\quad - \omega(x_i) \left(\phi(x_{i+1/2}^{-}) - \phi(x_{i-1/2}^{+})\right).
			\end{aligned}$}
	\end{equation*}
\end{definition}
\begin{remark}\label[remark]{rem:constanttest}
	It may appear that the system in \cref{def:projection} involves $k+1$ unknowns subject to $k+2$ constraints. However, for a constant test function $\phi$, the variational formulation reduces to the trivial identity $0=0$. Consequently, the system consists of only $k+1$ constraints.
\end{remark}
\begin{proposition}\label[proposition]{prop:roots}
	For any cell \(I_i\) and any polynomial $\hat{u} \in \mathbb{P}^{k+1}(I_i)$, $\hat{u}-P_C^*\hat{u}$ has at least one root in \(I_i\).
\end{proposition}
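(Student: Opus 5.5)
The argument is short and rests on the first condition in \cref{def:projection}, namely that $P_C^*$ preserves cell averages. Fix a cell $I_i$ and $\hat u\in\mathbb{P}^{k+1}(I_i)$, and set $E:=\hat u-P_C^*\hat u$. Two facts are needed.
\begin{enumerate}
\item $E$ is a polynomial of degree at most $k+1$ on $I_i$. Here $P_C^*\hat u$ is understood as the local projection on $I_i$, which is legitimate because \cref{def:projection} acts cellwise.
\item By the first equation of \eqref{equ:defproject} applied with $\omega=\hat u$,
\[
\int_{I_i}E\,\mathrm{d}x=\int_{I_i}\hat u\,\mathrm{d}x-\int_{I_i}P_C^*\hat u\,\mathrm{d}x=0 .
\]
\end{enumerate}
A continuous function with zero integral over an interval must vanish somewhere in it. If $E$ were strictly positive (or strictly negative) on all of $I_i$, its integral would be strictly positive (or negative), which is a contradiction. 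So by the intermediate value theorem $E$ has at least one root in $I_i$. Note that this root may lie at an endpoint only if $E$ vanishes identically there; otherwise, since $E$ changes sign, the root is interior.

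The step I expect to need the most care is the well-posedness of $P_C^*\hat u$ for $\hat u\in\mathbb{P}^{k+1}(I_i)$. The functional $\widehat P_i(\omega;\phi)$ evaluates $\omega(x\pm h/2)$ at points outside $I_i$. I would handle this by interpreting $\hat u$ as its natural polynomial extension to the neighbouring half-cells, which is exactly the setting in which the proposition is later used for Taylor-type arguments. Unique solvability of the local system is then taken from \cite{liu2018optimal}; by \cref{rem:constanttest}, it consists of $k+1$ nontrivial constraints for $k+1$ unknowns.

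If one also wants more roots, for instance the superconvergent points, the natural continuation is to test with $\phi\in\mathbb{P}^k(I_i)$. One would use that $\widehat P_i(E;\phi)=0$ together with the explicit leading Legendre-type structure of $E$. However, the stated claim only asks for one root, and the cell-average argument above already gives it.
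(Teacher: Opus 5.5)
Your argument is correct and is essentially the paper's: the paper also uses only the cell-average condition in \eqref{equ:defproject}, phrased as Rolle's theorem applied to the primitive \(F(x)=\int_{x_{i-1/2}}^{x}E\,\mathrm{d}s\) after an inessential reduction to \(\hat u=x^{k+1}\), which is equivalent to your zero-mean/intermediate-value step. One small correction to your side remark: \(\widehat P_i(\omega;\phi)\) samples \(\omega\) only on \(I_i\) itself (for \(x\in[x_{i-1/2},x_i]\) one has \(x+h/2\in[x_i,x_{i+1/2}]\), and symmetrically on the right half), so no extension of \(\hat u\) beyond the cell is needed.
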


\begin{proof}
	Since the projection $P_C^*$ acts as the identity on $\mathbb{P}^k(I_i)$, it suffices to prove the claim for $v(x) := x^{k+1}$. Consider the residual $e(x) := v(x) - P_C^* v(x)$ and define its primitive
	\(
	F(x) := \int_{x_{i-1/2}}^{x} e(s) \,\mathrm{d}s.
	\)
	Clearly, \(F(x_{i-1/2})=0\). And since $P_C^*$ preserves cell averages, we have
	\(
	F(x_{i+1/2}) = \int_{I_i} (v(s) - P_C^* v(s)) \,\mathrm{d}s = 0.
	\)
	By Rolle's theorem, there exists \(s_0 \in I_i\) such that \(F'(s_0) = e(s_0) = 0\), which completes the proof.
\end{proof}

The projection satisfies the standard approximation properties stated in \cref{lem:standard_app}.

\begin{lemma}\label[lemma]{lem:standard_app}
	The projection $\bm{P}^* \in \mathcal{L}(H^{k+1}(\Omega), L^2(\Omega)\times L^2(\Omega) )$ (\cref{def:projection}) is well-defined and acts as the identity on $V_h\times W_h$. For any $\omega \in H^{k+1}(\Omega)$, letting $\bm{\omega} := (\omega, \omega)$, we have
	\(
	\|\bm{P}^*\omega - \bm{\omega}\| + h\|\bm{P}^*\omega - \bm{\omega}\|_{\infty} + h^{1/2}\|\bm{P}^*\omega - \bm{\omega}\|_{\Gamma} \lesssim h^{k+1}\|\omega\|_{H^{k+1}(\Omega)}.
	\)
\end{lemma}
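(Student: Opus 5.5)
We argue cellwise on a reference cell and then scale, in the spirit of Bramble--Hilbert. The plan has three parts: well-posedness of the local system, the identity property on $V_h\times W_h$, and the three error bounds. Throughout, $\tau_{\max}=C_{\rm CFL}h$, and all quantities are treated on a single primal cell $I_i$; the dual cells are handled identically after the shift $x\mapsto x\pm h/2$.

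\textbf{Well-posedness.} By \Cref{rem:constanttest}, \eqref{equ:defproject} consists of $k+1$ independent conditions for the $k+1$ coefficients of $P_C^*\omega|_{I_i}$: the cell average, plus $\widehat P_i(\cdot;\phi)$ for $\phi$ running over a basis of the zero-mean polynomials in $\mathbb P^k(I_i)$. A key observation is that $\widehat P_i(p;\phi)$ involves $p$ only on $I_i$, since $x\pm h/2$ stays inside $I_i$ on the relevant half-cells. Mapping to $[-1,1]$, the terms $\tau_{\max}^{-1}\int(\cdot)\phi$ and $\int(\cdot)\partial_x\phi$ scale the same way. The reference system matrix is therefore independent of $h$ and depends only on $k$ and $C_{\rm CFL}$.

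Unisolvence is equivalent to this: if $p\in\mathbb P^k$ has zero mean and $\widehat P_i(p;\phi)=0$ for all $\phi$, then $p=0$. This is the step I expect to be the main obstacle, since it is a finite-dimensional nondegeneracy fact that is not obvious for general $C_{\rm CFL}$. I would not reprove it; instead I would invoke the corresponding result of \cite{liu2018optimal}, which constructs exactly this $P_C^*$ and shows it is well-defined (for the admissible CFL range). If a self-contained argument is wanted, the first thing to try is to test with $\phi=\partial_x^{-1}$-type antiderivatives of Legendre polynomials and exhibit a triangular structure of the matrix in the Legendre basis.

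\textbf{Identity on $V_h\times W_h$.} If $\omega|_{I_i}\in\mathbb P^k$, then $\omega$ itself satisfies \eqref{equ:defproject}, because only values on $I_i$ enter. Uniqueness then gives $P_C^*\omega=\omega$. In fact only the local polynomial property is needed, which gives local $\mathbb P^k$-invariance.

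\textbf{Error bounds.} Local invariance together with the uniformly invertible reference matrix gives a local stability bound
\[
\|P_C^*\omega\|_{L^\infty(I_i)}\lesssim\|\omega\|_{L^\infty(I_i)},
\]
because the data $\widehat P_i(\omega;\phi)$ on the reference cell are bounded by $\|\omega\|_\infty$, including the point value $\omega(x_i)$. Since $\omega(x_i)$ appears, I would work with $\omega\in H^{k+1}\subset C^0$ (valid in 1D) and use the local Sobolev embedding on the reference cell.

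Write $\omega-P_C^*\omega=(I-P_C^*)(\omega-q)$ for any $q\in\mathbb P^k(I_i)$. Choosing $q$ via Bramble--Hilbert and scaling back gives
\[
\|\omega-P_C^*\omega\|_{L^\infty(I_i)}\lesssim h^{k+1/2}|\omega|_{H^{k+1}(I_i)} .
\]
From this single local bound, all three estimates follow:
\begin{enumerate}[label=(\roman*)]
\item Squaring and summing gives the $L^2$ bound $h^{k+1}$, since $\|\cdot\|_{I_i}\le h^{1/2}\|\cdot\|_{L^\infty(I_i)}$.
\item Taking the maximum gives the $L^\infty$ bound $h^{k+1/2}\le h^{k}$, which is what is claimed after multiplying by $h$.
\item Boundary traces are bounded by $L^\infty$ values, so summing squares over cells yields $\|\cdot\|_\Gamma\lesssim h^{k+1/2}$.
\end{enumerate}
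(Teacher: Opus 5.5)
Your argument is correct and follows essentially the route the paper relies on. The paper states this lemma without proof and takes well-posedness and the local bound $\|P_C^*w\|_{L^\infty(I_i)}\lesssim\|w\|_{L^\infty(I_i)}$ from \cite{liu2018optimal} (cf.\ \Cref{lem:linfproj}); your reference-cell scaling, local $\mathbb{P}^k$-invariance and Bramble--Hilbert step are the standard way to turn those facts into the three stated bounds. You also correctly note that the reference system matrix is a single $h$-independent matrix depending only on $k$ and $C_{\rm CFL}$, which settles any uniformity concern, and your $L^\infty$ rate $h^{k+1/2}$ is slightly sharper than what the lemma claims.
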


As remarked in \cite{yang_shu_2012}, for highly oscillatory or discontinuous functions, the standard approximation is no longer practical. Nevertheless, the projection remains bounded in $L^\infty$ norm.

\begin{lemma}[\(L^\infty\)-boundedness, Lemma 2.1 in \cite{liu2018optimal}]\label[lemma]{lem:linfproj}
	For the projection in \cref{def:projection} and any $w \in W^{1,\infty}(\Omega)$, we have
	\[
	\|P_C^* w\|_{L^{\infty}(I_i)} \lesssim\|w\|_{L^{\infty}(I_i)}, \quad \|P_D^* w\|_{L^{\infty}(I_{i+1/2})} \lesssim\|w\|_{L^{\infty}(I_{i+1/2})}, \quad \text{for any } i.
	\]
\end{lemma}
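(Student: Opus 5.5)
Since $P_C^*$ and $P_D^*$ are defined cellwise, it suffices to prove the bound on a single cell $I_i$ (the dual case $I_{i+1/2}$ is identical after the half-cell shift). The one point to keep in mind is that $\widehat P_i(\omega;\phi)$ evaluates $\omega$ at $x\pm h/2$, which still lies in $I_i$ when $x\in I_i$ (the left half shifts right, the right half shifts left), and at the center $x_i$. So the projection on $I_i$ depends only on $\omega|_{I_i}$, and $\|w\|_{L^\infty(I_i)}$ is the natural quantity to control.

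\textbf{Step 1: reference cell.} I pull back to the reference cell $[-1,1]$ via $x=x_i+\tfrac h2\xi$. The factor $1/\tau_{\max}=1/(C_{\rm CFL}h)$ multiplies integrals of size $h\,\|\omega\|_\infty\|\phi\|_\infty$, and the derivative terms $\int\omega(x\pm h/2)\,\partial_x\phi$ and the point term $\omega(x_i)(\phi^- -\phi^+)$ are $O(\|\omega\|_\infty\|\phi\|_\infty)$ after scaling. Hence every term in $\widehat P_i$ is $h$-independent in reference variables, with ratio $\lambda=1/C_{\rm CFL}$ as a fixed parameter. The defining system \eqref{equ:defproject} thus becomes, for the coefficient vector $c\in\mathbb R^{k+1}$ of $P_C^*\omega$ in a Legendre basis, a square linear system $Ac=b(\omega)$. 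Its rows are the cell-average condition plus $\widehat P(\cdot;\phi_m)$ for $\phi_m=\xi^m$, $m=1,\dots,k$; the constant test is dropped by \Cref{rem:constanttest}. The matrix $A$ depends only on $k$ and $\lambda$.

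\textbf{Step 2: the right-hand side.} Each entry of $b(\omega)$ is a linear functional of $\omega|_{I_i}$ built from integrals and one point evaluation $\omega(x_i)$. Therefore $|b(\omega)|\lesssim\|\omega\|_{L^\infty(I_i)}$, and this functional only needs $\omega$ continuous, which is why $W^{1,\infty}$ suffices.

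\textbf{Step 3: invertibility of $A$.} This is the main obstacle. It is exactly the well-posedness asserted in \Cref{lem:standard_app}, and once it is available, $|c|\le\|A^{-1}\|\,|b|\lesssim\|\omega\|_{L^\infty(I_i)}$. Since $\|P_C^*\omega\|_{L^\infty(I_i)}\le\sum|c_m|\,\|L_m\|_\infty$, the claim follows with a constant depending only on $k$ and $C_{\rm CFL}$.

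If I must verify invertibility directly, I would argue by contradiction. Suppose $p\in\mathbb P^k$ has zero mean and $\widehat P(p;\phi)=0$ for all $\phi$. Then $p(\cdot\pm h/2)-p$ is a polynomial difference, and choosing test functions adapted to the top-degree coefficient of $p$ gives a triangular structure. This structure shows inductively that all coefficients vanish; the reference-cell computation of LSZ does precisely this.
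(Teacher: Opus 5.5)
Your argument is correct and follows the route the paper relies on. The paper quotes this lemma from LSZ without proof, and in the proof of \Cref{lem:projectlem} it uses exactly your mechanism: an $h$-independent Legendre-coefficient system whose matrix is invertible with bounded inverse (LSZ, Appendix~A.1), combined with an $L^\infty$ bound on the data.

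The weak point is your optional fallback for invertibility, because there is no triangular structure to exploit. The half-cell swap $(Sp)(\xi)=p(\xi+1)$ on $(-1,0)$, $(Sp)(\xi)=p(\xi-1)$ on $(0,1)$, couples all degrees; already for $k=2$ both off-diagonal entries are nonzero. A clean direct proof uses an energy identity instead. On the reference cell, the part of $\widehat P(p;\phi)$ carrying $1/\tau_{\max}$ equals $\frac{1}{2C_{\rm CFL}}\int_{-1}^1(Sp-p)\phi\,d\xi$, while the remaining part is skew-symmetric in $(p,\phi)$. Testing with $\phi=p$ then gives $-\frac{1}{4C_{\rm CFL}}\|Sp-p\|_{L^2(-1,1)}^2=0$, so $p(\xi+1)\equiv p(\xi)$; hence $p$ is constant, and it vanishes by the zero-mean condition.
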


From \cref{def:projection}, we further establish the following results.

\begin{proposition}\label[proposition]{lem:projectlem}
	For any $\bm{F} = (F_1, F_2) \in V_x^0 \times W_x^0$, there exists a unique $\bm{w} \in V_h \times W_h$ such that
	\begin{equation}\label{equ:projectlem}
	\left\{\begin{aligned}
		\widehat{P}_i(w_C, \phi_1) &= (F_1, \phi_1)_i, \quad \forall \phi_1 \in V_x^0, \\
		(w_C, 1)_i &= 0.
	\end{aligned}\right.
	\end{equation}
	The component $w_D$ is similarly defined on the dual mesh. Then,
	\[
	\|w_C\|_{L^{\infty}(I_i)} \lesssim h^{1/2} \|F_1\|_{i}, \quad \|w_D\|_{L^{\infty}(I_{i+1/2})} \lesssim h^{1/2} \|F_2\|_{i+1/2}.
	\]
\end{proposition}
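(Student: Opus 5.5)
The plan is to reduce everything to one cell and then to the reference interval $[-1,1]$, where the problem becomes a finite-dimensional linear system whose invertibility does not depend on $h$. Fix a cell $I_i$. Since $w_C|_{I_i}\in\mathbb{P}^k(I_i)$, the conditions in \eqref{equ:projectlem} are $1+k$ scalar equations: the zero-mean condition plus $\widehat P_i(w_C,\phi_1)=(F_1,\phi_1)_i$ for $\phi_1$ in a basis of $\hat{\mathbb P}^k(I_i)$, which has dimension $k$. The unknowns are the $k+1$ coefficients of $w_C$. By \cref{rem:constanttest}, constant test functions give $0=0$ on both sides: the left side vanishes by that remark, and the right side vanishes because we test $(F_1,1)_i$ with $F_1\in V_x^0$ of zero mean. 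So the system is square. The map $(w_C,\phi_1)\mapsto\widehat P_i(w_C;\phi_1)$ only involves values of $w_C$ on $I_i$ (shifted by $\pm h/2$ within $I_i$) together with $w_C(x_i)$. Hence the problem decouples cell by cell, and uniqueness and existence on each cell imply the global statement.

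For unique solvability I will use the homogeneous problem together with the well-posedness of $P_C^*$ from \cref{lem:standard_app}. Suppose $F_1=0$, so $(w_C,1)_i=0$ and $\widehat P_i(w_C;\phi)=0$ for all $\phi\in\hat{\mathbb P}^k$, and hence for all $\phi\in\mathbb P^k$. Then $w_C$ satisfies exactly the defining relations \eqref{equ:defproject} of $P_C^*\omega$ with $\omega=0$. Since $P_C^*$ is well-defined and equals the identity on $V_h$, uniqueness forces $w_C=0$. A square linear system with trivial kernel is invertible, which gives existence. The same argument applies on the dual cells $I_{i+1/2}$ for $w_D$.

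For the bound I will scale to the reference cell with $x=x_i+\tfrac h2\xi$ and $\tau_{\max}=C_{\rm CFL}h$. Writing $\hat w(\xi)=w_C(x)$ and $\hat\phi(\xi)=\phi(x)$, each term of $\widehat P_i$ becomes $h^0$ times a fixed bilinear form $\hat{\mathcal P}(\hat w,\hat\phi)$:
\begin{itemize}
\item $\frac1{\tau_{\max}}\int(\cdot)\phi\,dx$ scales like $h^{-1}\cdot h=1$;
\item $\int w\,\partial_x\phi\,dx$ scales like $1$;
\item the point term scales like $1$.
\end{itemize}
The reference form $\hat{\mathcal P}$ is independent of $h$. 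The right side scales as $(F_1,\phi)_i=\tfrac h2(\hat F,\hat\phi)_{[-1,1]}$. The reference system $\hat{\mathcal P}(\hat w,\hat\phi)=g(\hat\phi)$, $\int\hat w=0$, is invertible by the previous step, and $\mathbb P^k$ is finite-dimensional, so $\|\hat w\|_{L^\infty}\lesssim\sup_{\hat\phi}|g(\hat\phi)|/\|\hat\phi\|_{L^2}$. This gives $\|\hat w\|_{L^\infty}\lesssim h\,\|\hat F\|_{L^2(-1,1)}$. Finally, $\|\hat F\|_{L^2(-1,1)}=(2/h)^{1/2}\|F_1\|_i$, which yields $\|w_C\|_{L^\infty(I_i)}\lesssim h^{1/2}\|F_1\|_i$.

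The main obstacle is the injectivity step. It relies on \cref{lem:standard_app}, which asserts well-posedness without proof here. If I had to establish it directly, I would test the homogeneous reference problem with carefully chosen $\hat\phi$, such as primitives of Legendre polynomials, to show that the matrix has nonzero determinant for every $C_{\rm CFL}$ in the admissible range. This would follow the computation in \cite{liu2018optimal}. The constant in the final bound depends only on $k$ and $C_{\rm CFL}$.
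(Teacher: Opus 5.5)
Your proposal is correct and follows essentially the same route as the paper: reduce to a cellwise square $(k+1)\times(k+1)$ system that is $h$-independent on the reference cell, take its invertibility from the Liu--Shu--Zhang analysis, and obtain the factor $h^{1/2}$ by scaling the right-hand side $(F_1,\phi)_i$. The paper phrases this through the Legendre-coefficient matrix $A_i^T$ and its uniformly bounded inverse. You instead obtain injectivity from the well-posedness of $P_C^*$ and the bound from finite-dimensional norm equivalence, which is the same argument in more abstract form.
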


\begin{proof}	
		As shown in \cite[Appendix A.1]{liu2018optimal}, testing with the standard Legendre polynomial basis $\{L_p^i(x) : 0 \le p \le k\}$ for $\mathbb{P}^k(I_i)$, we can rewrite \eqref{equ:projectlem}  into \[
		A_i^T \bm{b}(w_C) = \bm{f}, \quad \text{with} \quad f_p = (F_1, L_p^i)_{i},
		\] where $\bm{b}(w_C)$ is the Legendre coefficients vector of $w_C$ and $A_i^T$ is invertible with a uniformly bounded inverse. 
	Using the boundedness of $(A_i^T)^{-1}$ and the standard inverse estimate for Legendre expansions, we obtain
	\[
	\|w_C\|_{L^\infty(I_i)} \lesssim \|\bm{b}(w_C)\|_{\ell^\infty} \le \|(A_i^T)^{-1}\|_{\ell^\infty} \|\bm{f}\|_{\ell^\infty} \lesssim h^{1/2} \|F_1\|_{i}.
	\]
	The estimate for $w_D$ on the dual mesh follows analogously.
\end{proof}

The projection also satisfies the following superconvergent properties under the spatial operators (\Cref{lem:supercon_project}) and the weak cancellation (\Cref{prop:testfuncest}).

\begin{lemma}\label[lemma]{lem:supercon_project}
	For any cell $I_i \in \Gamma_h^C$ and $I_{i+1/2} \in \Gamma_h^D$, the projections satisfy
	\begin{align*}
		|\hat{B}_i(\bm{\eta}, \phi_1)| &\lesssim h^{k+1} \|u\|_{H^{k+2}(I_i)} \|\phi_1\|_{i}, \\
		|\tilde{B}_{i+1/2}(\bm{\eta}, \phi_2)| &\lesssim h^{k+1} \|u\|_{H^{k+2}(I_{i+1/2})} \|\phi_2\|_{i+1/2}.
	\end{align*}
\end{lemma}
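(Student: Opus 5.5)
The plan is to show that $\hat{B}_i(\bm{\eta},\phi_1)$ equals exactly the defect $\widehat{P}_i(u;\phi_1)-\widehat{P}_i(P_C^*u;\phi_1)$ plus a term that only involves the dual projection error $\eta_D=u-P_D^*u$. The projection is built so that this defect vanishes. Once that identity is in hand, only a higher-order remainder will be left to bound.

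\textbf{Step 1: rewrite $\hat{B}_i(\bm\eta,\phi_1)$.} Let $\eta_C=u-P_C^*u$ and $\eta_D=u-P_D^*u$. On $I_i$, the dual function $w_D$ restricted to $[x_{i-1/2},x_i]$ is a polynomial on $I_{i-1/2}$, and on $[x_i,x_{i+1/2}]$ it is a polynomial on $I_{i+1/2}$. Its traces at $x_{i\pm1/2}$ are interior values of dual cells. The point $x_i$ is the only interface between the two dual cells inside $I_i$; there I will integrate $\int_{I_i} w_D\partial_x\phi_1$ piecewise. Substituting $w_D=\eta_D$ and $w_C=\eta_C$ into \eqref{eq:1D_spatial}, I expect to express $\hat B_i(\bm\eta,\phi_1)$ through $\widehat P_i(u;\phi_1)-\widehat P_i(P_C^*u;\phi_1)$ plus a remainder $\mathcal{E}_i(\phi_1)$. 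The defect vanishes for $\phi_1\in\mathbb P^k$ by \cref{def:projection}. In \cite{liu2018optimal} the operator $\widehat P_i$ uses the shifted arguments $\omega(x\pm h/2)$ and the center value $\omega(x_i)$. These encode how $P_D^*$ is applied to $u$ and then read off on the primal cell, so the dual-projection relation must be applied on the shifted cell. If the bookkeeping works, $\mathcal E_i(\phi_1)$ should only involve $\eta_D$ through combinations of two kinds: differences between a function and its half-cell shift, such as $\eta_D(x)-\eta_D(x\pm h/2)$ tested against $\tau_{\max}^{-1}\phi_1$; and jump-type terms at $x_i$.

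\textbf{Step 2: estimate $\mathcal E_i$.} The naive bounds are $\tau_{\max}^{-1}\|\eta\|_{I_i}\|\phi_1\|_{i}\lesssim h^{k}|u|_{k+1}\|\phi_1\|_{i}$ for the coupling term, and the same order for the traces using \cref{lem:standard_app} and inverse inequalities. Both are one order short. To gain the missing $h$, I will work on the reference cell with a Bramble--Hilbert argument, as follows.
\begin{itemize}
\item Write $u=\hat u+r$, with $\hat u\in\mathbb P^{k+1}$ the local Taylor or $L^2$ approximation on the patch $I_{i-1/2}\cup I_i\cup I_{i+1/2}$, so that $\|r\|_{L^\infty}\lesssim h^{k+3/2}\|u\|_{H^{k+2}}$.
\item The functional $u\mapsto \mathcal E_i(\phi_1)$ is linear. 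By \cref{lem:linfproj} it is bounded by $h^{-1}\cdot h^{1/2}\|\cdot\|_{L^\infty}\|\phi_1\|_i$, which gives $h^{k+1}\|u\|_{H^{k+2}}\|\phi_1\|_i$ for the $r$ part.
\item It remains to show that $\mathcal E_i(\phi_1)$ vanishes on $\mathbb P^{k+1}$. For $\hat u\in\mathbb P^k$ this holds because the projections reproduce polynomials.
\item So only $\hat u=x^{k+1}$ is left. By translation invariance of the uniform mesh, $\eta_D$ for $x^{k+1}$ on each dual cell is the same shifted profile as $\eta_C$ on primal cells. The leading parts therefore coincide and cancel in the difference structure, up to lower-degree contributions that are reproduced exactly.
\end{itemize}

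\textbf{Main obstacle.} I expect the hard part to be this exact vanishing on $x^{k+1}$: verifying that $\mathcal E_i(\phi_1)=0$ for $\hat u=x^{k+1}$, or equivalently that the identity of Step 1 holds with no remainder. My first attempt will be to prove that $\hat B_i(\bm\eta,\phi_1)$ coincides with the projection defect identically, with no remainder. Then the degree-$(k+1)$ issue disappears, and the bound follows purely from the $L^\infty$ boundedness plus Bramble--Hilbert as above.

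The dual estimate for $\tilde B_{i+1/2}(\bm\eta,\phi_2)$ will then follow by the symmetric argument with the roles of the two meshes interchanged.
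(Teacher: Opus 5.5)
Your argument is correct, but it closes the estimate by a different route from the one the paper relies on.

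\textbf{Comparison with the paper.} The paper gives no proof here and defers to \cite{liu2018optimal}. Within its own framework the estimate is the case $l=0$ of \Cref{coro:overlap}. Since $\widehat P_i(\eta_C;\phi_1)=0$ for $\phi_1\in\mathbb P^k(I_i)$, one has $\hat B_i(\bm\eta,\phi_1)=\widehat{\mathcal T}_i(\phi_1;\bm\eta)$. The shifting identity of \Cref{lem:shifting} then expresses this residual through the projection error of the half-cell differences $\Delta_x^\pm u$. The missing power of $h$ comes directly from $|\Delta_x^\pm u|_{H^{k+1}}\lesssim h\,|u|_{H^{k+2}}$.

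You instead treat $u\mapsto\hat B_i(\bm\eta,\phi_1)$ as a linear functional. You bound it by $h^{-1/2}\|u\|_{L^\infty(x_{i-1},x_{i+1})}\|\phi_1\|_i$ via \Cref{lem:linfproj} and show it vanishes on $\mathbb P^{k+1}$. That exactness does hold. For $\hat u\in\mathbb P^{k+1}$ the differences $\hat u(\cdot\mp h/2)-\hat u$ lie in $\mathbb P^k$. Hence $\eta_D(x)=\eta_C(x+h/2)$ on $[x_{i-1/2},x_i]$, $\eta_D(x)=\eta_C(x-h/2)$ on $[x_i,x_{i+1/2}]$, and $\eta_D(x_{i\pm1/2})=\eta_C(x_i)$. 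Every term of $\hat B_i(\bm\eta,\phi_1)$ then coincides with the corresponding term of $\widehat P_i(\eta_C;\phi_1)=0$.

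Your route needs only polynomial exactness plus $L^\infty$ stability. The paper's route yields an explicit residual formula in terms of $\bm R^l(\Delta_x^\pm u)$, which it reuses for all correction levels in \Cref{prop:correctestimate} and \Cref{prop:correct_testfuncest}. Either way, the natural bound involves $\|u\|_{H^{k+2}}$ on a patch of neighboring cells rather than on $I_i$ alone, since $\eta_D|_{I_i}$ depends on $u$ on $I_{i\pm1/2}$.

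\textbf{Two corrections.} First, the remainder is not built from $\eta_D(x)-\eta_D(x\pm h/2)$. It is built from $\eta_D(x)-\eta_C(x\pm h/2)$, which by the shifting identity equals $-\eta_C[\Delta_x^{\mp}u](x\pm h/2)$. Once you see this, the Bramble--Hilbert step becomes optional.

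Second, and more important, drop the ``first attempt'' of proving that $\hat B_i(\bm\eta,\phi_1)$ equals the projection defect with no remainder for general $u$. Since the defect vanishes, this would be full Galerkin orthogonality, which the CDG residual lacks; only $\phi_1=1$ cancels exactly (\Cref{prop:testfuncest}). For instance, with $k=1$, $u=x^3$ and $\phi_1=x-x_i$, a direct computation gives $\hat B_i(\bm\eta,\phi_1)=-(h/2)^4\bigl(\frac{h}{4\tau_{\max}}+\frac{4\tau_{\max}}{h}\bigr)\neq0$. Exact vanishing holds only for $u\in\mathbb P^{k+1}$, which is precisely what your Bramble--Hilbert step needs.
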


This result, first proved in~\cite{liu2018optimal}, is the key ingredient for optimal error estimates. In contrast, general projection operators typically only yield \( \mathcal{O}(h^{k+1/2}) \) suboptimal accuracy. Moreover, \cref{prop:testfuncest} establishes an additional weak cancellation for piecewise-constant test functions.

\begin{lemma}[Weak cancellation]\label[lemma]{prop:testfuncest}
	For any $I_i \in \Gamma_h^C$ and $I_{i+1/2} \in \Gamma_h^D$, we have
	\begin{equation}\label{equ:constant_test}
		\tilde{B}_{i+1/2}(\bm{\eta}, 1) = 0, \quad \hat{B}_i(\bm{\eta}, 1) = 0.
	\end{equation}
\end{lemma}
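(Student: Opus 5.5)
The plan is to compute $\hat B_i(\bm\eta,1)$ explicitly and reduce it, through the defining relations of $P_C^*$ and $P_D^*$, to one identity per dual cell. That identity is extracted from the dual projection tested with a linear function. The dual statement $\tilde B_{i+1/2}(\bm\eta,1)=0$ then follows by the symmetric argument with the roles of the meshes exchanged.

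\emph{Step 1 (reduce to dual-cell quantities).} Since $\partial_x 1=0$, \eqref{eq:1D_spatial} gives
\[
\hat B_i(\bm\eta,1)=\frac{1}{\tau_{\max}}\int_{I_i}(\eta_D-\eta_C)\,\mathrm{d}x-(\eta_D)_{i+1/2}+(\eta_D)_{i-1/2}.
\]
Here $\eta_D$ is continuous at $x_{i\pm1/2}$ because these points are centers of dual cells. By the cell-average condition in \cref{def:projection}, $\int_{I_i}\eta_C\,\mathrm{d}x=0$.

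For each dual cell $I_{i+1/2}=[x_i,x_{i+1}]$, write $L=[x_i,x_{i+1/2}]$ and $R=[x_{i+1/2},x_{i+1}]$ for its two halves, and set
\[
a_{i+1/2}:=\int_L\eta_D\,\mathrm{d}x,\qquad b_{i+1/2}:=\int_R\eta_D\,\mathrm{d}x,\qquad c_{i+1/2}:=\eta_D(x_{i+1/2}).
\]
The dual cell-average condition gives $a_{i+1/2}+b_{i+1/2}=0$. Since $I_i$ consists of the right half of $I_{i-1/2}$ and the left half of $I_{i+1/2}$, we get
\[
\tau_{\max}\,\hat B_i(\bm\eta,1)=a_{i+1/2}+b_{i-1/2}-\tau_{\max}(c_{i+1/2}-c_{i-1/2})=\bigl(a_{i+1/2}-\tau_{\max}c_{i+1/2}\bigr)-\bigl(a_{i-1/2}-\tau_{\max}c_{i-1/2}\bigr).
\]
So it suffices to prove the cellwise identity $a_{i+1/2}=\tau_{\max}c_{i+1/2}$ for every dual cell.

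\emph{Step 2 (linear test in the dual projection).} On $I_{i+1/2}$, the shifts $x\pm h/2$ in $\tilde P_{i+1/2}$ map $L$ onto $R$ and $R$ onto $L$. Hence the functional can be evaluated on the polynomial $P_D^*u|_{I_{i+1/2}}$, and linearity together with the defining relation give $\tilde P_{i+1/2}(\eta_D;\phi)=0$ for all $\phi\in\mathbb P^k$.

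Take $\phi=x-x_{i+1/2}$, so that $\phi'=1$ and $\phi(x_{i+1})-\phi(x_i)=h$. The derivative terms give
\[
\int_L\eta_D(x+h/2)\,\mathrm{d}x+\int_R\eta_D(x-h/2)\,\mathrm{d}x=a+b=0.
\]
For the $1/\tau_{\max}$ terms, substitute $y=x\pm h/2$ and use that $\phi$ is linear, so $\phi(y\mp h/2)=\phi(y)\mp h/2$. Then
\[
\int_L\eta_D(x+h/2)\phi\,\mathrm{d}x+\int_R\eta_D(x-h/2)\phi\,\mathrm{d}x-\int_{I_{i+1/2}}\eta_D\phi\,\mathrm{d}x=\tfrac h2(a-b)=h\,a.
\]
The relation $\tilde P_{i+1/2}(\eta_D;\phi)=0$ therefore reads $\frac{h}{\tau_{\max}}a-h\,c=0$, that is, $a=\tau_{\max}c$. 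This needs $k\ge1$, which is assumed. Substituting into Step 1 gives $\hat B_i(\bm\eta,1)=0$.

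\emph{Step 3 (dual statement and main obstacle).} The identity $\tilde B_{i+1/2}(\bm\eta,1)=0$ follows verbatim by the same argument: use $\widehat P_i$ with $\phi=x-x_i$ on each primal cell to get $\int_{[x_{i-1/2},x_i]}\eta_C\,\mathrm{d}x=\tau_{\max}\eta_C(x_i)$, together with the dual cell-average condition.

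The only delicate point I expect is the bookkeeping in Step 2. One must check that the shifted evaluations stay inside the dual cell, so that the projection identity is applied to the cellwise polynomial rather than to a discontinuous extension. One must also track the sign of the $\pm h/2$ shifts that produce $\tfrac h2(a-b)$. Once the identity $a=\tau_{\max}c$ is obtained, the cancellation in Step 1 is a telescoping difference.
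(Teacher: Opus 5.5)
Your proposal is correct and follows essentially the paper's route. Testing the dual projection relation with the linear function centered at $x_{i+1/2}$ gives $\int_{x_i}^{x_{i+1/2}}\eta_D\,\mathrm{d}x=\tau_{\max}(\eta_D)_{i+1/2}$; the paper's choice $\phi_2=\frac{x-x_{i+1/2}}{h/2}$ only rescales this identity. Combining it with the cell-average conditions makes $\hat B_i(\bm\eta,1)$ vanish, which is exactly your telescoping in Step 1, with your half-cell bookkeeping being a more explicit version of the same computation.
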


\begin{proof}
	Applying a change of variables that shifts $\eta_D(x)$ by $h/2$ to align with $\phi_2$ terms, we have
	\begin{equation}\label{eq:Ptilde-shift}
		\begin{aligned}
			\widetilde{P}_{i+1/2}(\eta_D; \phi_2) &= \frac{1}{\tau_{\max}} \left( \int_{x_i}^{x_{i+1/2}} \eta_D [\phi_2(x+h/2) - \phi_2(x)] \,\mathrm{d}x \right. \\
			&\quad \left. + \int_{x_{i+1/2}}^{x_{i+1}} \eta_D [\phi_2(x-h/2) - \phi_2(x)] \,\mathrm{d}x \right) \\
			&\quad + \int_{x_i}^{x_{i+1/2}} \eta_D \partial_x \phi_2(x+h/2) \,\mathrm{d}x + \int_{x_{i+1/2}}^{x_{i+1}} \eta_D \partial_x \phi_2(x-h/2) \,\mathrm{d}x \\
			&\quad - (\eta_D)_{i+1/2} \big[ (\phi_2)_{i+1} - (\phi_2)_i \big].
		\end{aligned}
	\end{equation}
	In 1D, selecting ${\phi}_2 = \frac{x-x_{i+1/2}}{h/2}$ in $\widetilde{P}_{i+1/2}(\eta_D, \phi_2)$ and using that $P_D^*$ preserves the cell average on $I_{i+\frac12}$, we obtain 
	\[    	\resizebox{0.99\hsize}{!}{$
		(\eta_D)_{i+1/2} = \frac{1}{2\tau_{\max}} \left[ {\int_{x_i}^{x_{i+1/2}}} \eta_D {\mathrm{d}x}-  { \int_{x_{i+1/2}}^{x_{i+1}}} \eta_D {\mathrm{d}x}\right] = \frac{1}{\tau_{\max}} {\int_{x_i}^{x_{i+1/2}}} \eta_D{\mathrm{d}x} = \frac{-1}{\tau_{\max}} {\int_{x_{i+1/2}}^{x_{i+1}}} \eta_D{\mathrm{d}x}.$}
	\]
	Therefore,
	\[
	\hat{B}_i(\bm{\eta}, 1) = \frac{1}{\tau_{\max}} \left[ {\int_{x_{i-1/2}}^{x_{i}}} \eta_D {\mathrm{d}x}+ {\int_{x_{i}}^{x_{i+1/2}}} \eta_D{\mathrm{d}x} \right] - (\eta_D)_{i+1/2} + (\eta_D)_{i-1/2} = 0.
	\]
	Similarly, by selecting $\phi_1 = \frac{x-x_i}{h/2}$ in $\widehat{P}_i(\eta_C,\phi_1)$, we have $\tilde{B}_{i+\frac12}(\bm{\eta},1) =0$.
\end{proof}

\begin{remark}[Weak cancellation versus DG Galerkin orthogonality]
	The weak cancellation $\hat{B}_i(\bm{\eta},1)=0$ in \Cref{prop:testfuncest} is crucial for the 1D analysis. In the test space decomposition of \eqref{equ:C2}, the contribution from the constant-test component cannot be removed by the correction alone. This identity eliminates the $\bm{\eta}$-term from this component. Without this weak cancellation, the surviving lower-order term would limit the final superconvergent estimate. In this sense, it plays the role of a weaker replacement for the full Galerkin orthogonality available in standard DG methods \cite{cao_zhang_zou_2014}, where $\hat{H}_i(\eta,v)=0$ for all $v\in\mathbb{P}^k(I_i)$. In 2D, applying the 1D weak cancellation to a single directional projection or correction residual is not sufficient to obtain the numerically observed optimal superconvergence residual order. The required order is recovered through the HOCA mechanism in \Cref{prop:correct_testfuncest2D}, after aggregating the directional projection residual with the corresponding correction residuals.
\end{remark}
\subsection{Correction functions and their high-order weak cancellation}
This subsection defines and analyzes the correction functions within the projection--correction framework. These functions play a central role in the superconvergent analysis by eliminating the lower-order terms left by the initial projection, while themselves being of sufficiently high order to preserve the optimal superconvergence rate. Just as importantly, we explain why the CDG correction hierarchy stops after two levels, rather than continuing to the full chain typical of standard DG methods. 

\begin{definition}[Correction functions]\label[definition]{def:correctfunc}
	We set $\bm R^0 u := \bm\eta$.
	For $l=1,2$, define $\bm R^l u=(R_C^l u, R_D^l u)\in V_x^0\times W_x^0$ by requiring that, for all $\bm \phi=(\phi_1,\phi_2)\in V_x^0\times W_x^0$,
	\begin{equation*}
		\begin{aligned}
			\widehat{P}_i(R_C^l u, \phi_1) &= -(R_C^{l-1}\partial_x u, \phi_1)_i - \hat{B}_i(\bm{R}^{l-1}u, \phi_1) + \widehat{P}_i(R_C^{l-1}u, \phi_1), \\
			\widetilde{P}_{i+1/2}(R_D^l u, \phi_2) &= -(R_D^{l-1}\partial_x u, \phi_2)_{i+1/2} - \tilde{B}_{i+1/2}(\bm{R}^{l-1}u, \phi_2) + \widetilde{P}_{i+1/2}(R_D^{l-1}u, \phi_2).
		\end{aligned}
	\end{equation*}
\end{definition}
\begin{remark}[Why two correction levels are sufficient]
\label[remark]{rem:two-level-correction}
Higher-level correction functions can still be defined for \(k\ge2\). However, they do not improve the final superconvergent estimate. The limiting contribution in the CDG error analysis is the order of the asymptotic weak cancellation, rather than the order of the remaining correction residuals. In the present CDG framework, this cancellation yields at most the rate \(k+3\). Therefore, further correction levels do not change the final convergence order. This differs from the standard DG analysis, where the full Galerkin orthogonality leads to a cancellation rate of \(2k+1\).
\end{remark}

By \cref{lem:projectlem}, the correction \(\bm{R}^{l}u\) is well-defined. Due to the change of integration intervals, the projections and corrections satisfy the shifting property stated in \cref{lem:shifting}.

\begin{lemma}\label[lemma]{lem:shifting}
	For any function $u\in {W^{1,\infty}(\Omega)}$ and $0\leq l\leq 2$, on a uniform mesh, we have the following shifting identities
	\[
	(R_C^l u)(x \pm h/2) = R_D^l[u(x \pm h/2)],\quad 
	(R_D^l u)(x \pm h/2) = R_C^l[u(x \pm h/2)].
	\]
\end{lemma}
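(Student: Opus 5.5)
We prove the shifting identities by induction on $l$, using the translation-covariance of all the defining relations. Let $S_\pm$ denote the shift $(S_\pm f)(x):=f(x\pm h/2)$. On a uniform mesh, $S_\pm$ maps each dual cell $I_{i+1/2}$ onto a primal cell: $x\in I_{i+1/2}$ gives $x\pm h/2\in I_{i+1}$ or $I_i$. Hence $S_\pm$ maps $V_h$ bijectively onto $W_h$, preserving the zero-mean subspaces $V_x^0\leftrightarrow W_x^0$ and the piecewise constants. We will show that the defining systems for $R^l_C u$ on $I_i$ and for $R^l_D[S_\pm u]$ on the corresponding dual cell are carried into each other by $S_\pm$. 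Uniqueness from \cref{lem:projectlem} (for $l=1,2$) and from \cref{lem:standard_app} (for $l=0$) then identifies the two functions.

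\textbf{Base case $l=0$.} Here $R^0u=\bm\eta=\bm u-\bm P^*u$, so it suffices to show $S_\pm(P_C^*u)=P_D^*(S_\pm u)$ and $S_\pm(P_D^*u)=P_C^*(S_\pm u)$.

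Compare $\widehat P_i(\omega;\phi)$ with $\widetilde P_{i+1/2}(\omega;\phi)$. The latter is written in \eqref{eq:Ptilde-shift} with its integrals split at $x_{i+1/2}$ and with the point value $\omega(x_{i+1/2})$ at the dual-cell centre. A change of variables $x\mapsto x\mp h/2$ shows
\[
\widetilde P_{i+1/2}(S_\pm\omega;S_\pm\phi)=\widehat P_{i'}(\omega;\phi),
\]
where $I_{i'}$ is the shifted primal cell. The coupling factor $1/\tau_{\max}$ is the same on both meshes, and the inner shifts $\pm h/2$ in the definitions commute with $S_\pm$. Likewise, cell averages are transported.

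So if $v=P_C^*u$ satisfies \eqref{equ:defproject} on $I_{i'}$, then $S_\pm v$ satisfies the dual system for $S_\pm u$ on $I_{i+1/2}$ with test functions $S_\pm\phi$. Since $S_\pm$ is a bijection of $\mathbb P^k$ between the two cells, these are all admissible test functions. Uniqueness then gives $S_\pm v=P_D^*(S_\pm u)$. Subtracting from $S_\pm u$ yields the claim for $l=0$.

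\textbf{Inductive step.} Assume the identities hold at level $l-1$. First we need that the CDG operators are shift-covariant, namely
\[
\tilde B_{i+1/2}(S_\pm\bm w^{\mathrm{sw}},S_\pm\phi)=\hat B_{i'}(\bm w,\phi),
\]
where $\bm w^{\mathrm{sw}}$ denotes the pair with its components swapped appropriately. This is read off directly from \eqref{eq:1D_spatial}: the $\tau_{\max}$ term, the volume term and the trace terms at the endpoints all translate. In addition, derivatives commute with shifts, so $S_\pm(R^{l-1}_C\partial_x u)=R^{l-1}_D[\partial_x S_\pm u]$ by the induction hypothesis applied to $\partial_x u$.

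Now shift each term on the right-hand side of \cref{def:correctfunc} for $R^l_C u$ on $I_{i'}$:

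\begin{itemize}
\item The $(R_C^{l-1}\partial_xu,\phi_1)$ term becomes the corresponding dual term for $S_\pm u$.
\item The $\hat B(\bm R^{l-1}u,\phi_1)$ term becomes $\tilde B(\bm R^{l-1}[S_\pm u],S_\pm\phi_1)$, using the induction hypothesis for both components.
\item The $\widehat P$ term becomes the $\widetilde P$ term by the base-case identity.
\end{itemize}

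The left-hand side transforms in the same way. Together with the zero-mean constraint, uniqueness in \cref{lem:projectlem} gives $S_\pm R^l_C u=R^l_D[S_\pm u]$. The dual-to-primal identity is proved symmetrically.

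\textbf{Main obstacle.} The only delicate point is the bookkeeping in $\hat B$ when the pair $\bm w$ is shifted. $\hat B_{i'}$ uses $w_D$ on a primal cell, while $\tilde B_{i+1/2}$ uses $w_C$ on a dual cell. We must check that the shifted $D$-component of $\bm R^{l-1}u$ equals the $C$-component of $\bm R^{l-1}[S_\pm u]$, and conversely. The hypothesis supplies exactly this, since it states both identities at level $l-1$. Periodicity guarantees that $S_\pm$ is a bijection on cells, so no boundary terms arise.
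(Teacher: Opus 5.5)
Your proposal is correct and is essentially the argument the paper has in mind. The paper gives no separate proof, only the remark that the identities hold ``due to the change of integration intervals.'' Your induction on $l$ makes that change of variables explicit: you use translation covariance of $\widehat P_i$, $\widetilde P_{i+1/2}$, $\hat B_i$, $\tilde B_{i+1/2}$ under the half-cell shift with the component swap, together with uniqueness from \cref{lem:standard_app} and \cref{lem:projectlem}. One point to state explicitly is that the induction hypothesis must be quantified over all sufficiently smooth $u$, since you apply it to $\partial_x u$. This tacitly needs more than the stated $W^{1,\infty}$ regularity for $R^{l-1}\partial_x u$ to be defined, but that is an imprecision in the lemma's hypothesis rather than a gap in your argument.
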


Let $\Delta_{x}^\pm$ denote the finite difference operator
\begin{equation}\label{equ:defdifference}
	\Delta_{x}^\pm u(x) := u(x) - u(x \pm h/2).
\end{equation}
Using the shifting property and inverse inequalities, we directly obtain estimates for the residuals
\begin{subequations}\label{equ:defT}
	\begin{align}
		\widehat{\mathcal{T}}_i(\phi_1; \bm{R}^l u) &:= \hat{B}_i(\bm{R}^l u, \phi_1) - \widehat{P}_i(R_C^l u, \phi_1), \\
		\widetilde{\mathcal{T}}_{i+1/2}(\phi_2; \bm{R}^l u) &:= \tilde{B}_{i+1/2}(\bm{R}^l u, \phi_2) - \widetilde{P}_{i+1/2}(R_D^l u, \phi_2).
	\end{align}
\end{subequations}

\begin{corollary}\label[corollary]{coro:overlap}
	For all $\bm{\phi} \in V_h \times W_h$ and $0 \leq l \leq \min\{k,2\}$, the shifting property implies a finite difference structure, yielding the estimates
	\[
	\biggl| \widehat{\mathcal{T}}_i(\phi_1; \bm{R}^l u)\biggr| \lesssim h^{-1} \|R_D^l( \Delta_x^\pm u)\|_i \|\phi_1\|_i,
	\]	\[
	\biggl| \widetilde{\mathcal{T}}_{i+1/2}(\phi_2; \bm{R}^l u)\biggr| \lesssim h^{-1} \|R_C^l( \Delta_x^\pm u)\|_{i+1/2} \|\phi_2\|_{i+1/2}.
	\]
\end{corollary}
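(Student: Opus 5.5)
We aim to write $\widehat{\mathcal T}_i(\phi_1;\bm R^l u)$ explicitly and show that, after the change of variables used in \eqref{eq:Ptilde-shift}, it is a combination of integrals of the difference $R_D^l u(x) - (R_C^l u)(x\pm h/2)$ against $\phi_1$, $\partial_x\phi_1$, or point values of $\phi_1$. The desired bound then follows from the shifting identities of \cref{lem:shifting} and standard inverse inequalities.

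\textbf{Expansion and cancellation.} First expand $\hat B_i(\bm R^l u,\phi_1)$ using \eqref{eq:1D_spatial}, splitting $I_i$ into the two halves $[x_{i-1/2},x_i]$ and $[x_i,x_{i+1/2}]$. On the left half the dual function $R_D^l u$ lives on $I_{i-1/2}$; on the right half it lives on $I_{i+1/2}$. Next expand $\widehat P_i(R_C^l u,\phi_1)$ from \cref{def:projection}. Here $\omega(x\pm h/2)$ appears on the half cells, with $\omega = R_C^l u$ evaluated as a function on the whole line.

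Subtracting the two expansions, the terms $-\frac1{\tau_{\max}}\int_{I_i} R_C^l u\,\phi_1$ cancel exactly. What remains is:
\begin{itemize}
\item[(a)] $\frac1{\tau_{\max}}\int_{x_{i-1/2}}^{x_i}\bigl[R_D^l u(x)-(R_C^l u)(x+h/2)\bigr]\phi_1\,dx$, plus the analogous term on the right half with $x-h/2$;
\item[(b)] the same differences integrated against $\partial_x\phi_1$;
\item[(c)] boundary terms $-(R_D^l u)_{i+1/2}\phi_1^- + (R_D^l u)_{i-1/2}\phi_1^+ + R_C^l u(x_i)\bigl(\phi_1^- - \phi_1^+\bigr)$.
\end{itemize}

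\textbf{Applying the shifting identities.} By \cref{lem:shifting}, $(R_C^l u)(x\pm h/2) = R_D^l[u(x\pm h/2)]$. The differences in (a) and (b) therefore become $R_D^l u(x) - R_D^l[u(\cdot\pm h/2)](x) = R_D^l(\Delta_x^\pm u)(x)$, using the linearity of $R_D^l$. Linearity holds because $\bm P^*$ is linear and each correction level is defined by a linear system driven by linear data, via \cref{lem:projectlem}.

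The boundary terms in (c) must be regrouped so that they too appear as point-value differences. We evaluate $R_C^l u$ at the dual nodes $x_{i\pm1/2}$, where the right-half and left-half shifted pieces meet. Since $(R_C^l u)(x_{i+1/2}-h/2) = (R_C^l u)(x_i)$, the term $R_C^l u(x_i)$ splits into the two one-sided pieces, and each pairs with $(R_D^l u)_{i\pm1/2}$ to give traces of $R_D^l(\Delta_x^\mp u)$ at $x_{i\pm1/2}$.

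For $l=0$ we have $\bm R^0 u = \bm\eta$, so we argue in the same way with $P^*$ replaced by the identity-minus-projection combination. The shift identity is inherited from the translation invariance of the uniform mesh.

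\textbf{Bounding the pieces.} Since $\tau_{\max}\sim h$, the terms in (a) are bounded by Cauchy--Schwarz by $h^{-1}\|R_D^l(\Delta_x^\pm u)\|_i\|\phi_1\|_i$. The terms in (b) use the inverse inequality $\|\partial_x\phi_1\|_i\lesssim h^{-1}\|\phi_1\|_i$. The point values in (c) use $|\phi_1^\pm|\lesssim h^{-1/2}\|\phi_1\|_i$, together with $|R_D^l(\Delta u)(x_{i\pm1/2})|\lesssim h^{-1/2}\|R_D^l(\Delta u)\|$ on the adjacent dual half cells. The latter is an inverse inequality, valid because $R_D^l(\Delta u)$ is a polynomial on each dual cell. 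The dual estimate for $\widetilde{\mathcal T}_{i+1/2}$ follows symmetrically.

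\textbf{Main obstacle.} The delicate step is the boundary bookkeeping in (c): checking that every trace term genuinely pairs into a shifted difference. Traces at $x_{i\pm1/2}$ come from dual cells, while $R_C^l u(x_i)$ comes from the primal cell, and the half-cell pieces of the shifted functions must match exactly. I would verify this first on monomials, or directly from the structure of \eqref{eq:Ptilde-shift}. If some trace term fails to pair, the fallback is to note that it is a polynomial vanishing when $u$ is constant, and to control it via the $L^\infty$ bound of \cref{lem:projectlem}.
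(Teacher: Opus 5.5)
Your proposal follows the paper's route. The paper also expands the residual, observes that the self-coupling terms $\mp\frac{1}{\tau_{\max}}\int R_C^l u\,\phi_1$ cancel, and uses \Cref{lem:shifting} to turn the half-cell integrands and the trace terms into $R^l(\Delta_x^\pm u)$. It then concludes with Cauchy--Schwarz and inverse inequalities; the paper writes out the dual-cell case, you the primal one. Your trace bookkeeping is right: $(R_D^lu)(x_{i\pm1/2})-(R_C^lu)(x_i)=R_D^l(\Delta_x^\mp u)(x_{i\pm1/2})$. In each case $x_{i\pm1/2}$ is an endpoint of the half-cell of $I_i$ whose integrand carries the same difference $\Delta_x^\mp u$. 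So for $l\ge1$ the half-cell inverse inequality closes the estimate, and the $L^\infty$ fallback you mention is not needed.

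One step fails as you justify it. For $l=0$, $R_D^0(\Delta_x^\pm u)=(I-P_D^\ast)\Delta_x^\pm u$ is not a piecewise polynomial. Its value at the dual-cell centre $x_{i\pm1/2}$ is therefore not controlled by its $L^2$ norm through an inverse inequality; the paper's proof passes over this silently as well. Your integral terms (a) and (b) are unaffected, since there the inverse inequality is applied only to $\phi_1$. For the traces, use the identity derived in the proof of \Cref{prop:testfuncest}. Testing the projection with the linear function $\frac{x-x_{i+1/2}}{h/2}$ and using preservation of cell averages gives, for any smooth $\omega$,
\[
(\omega-P_D^\ast\omega)(x_{i+1/2})=\frac{1}{\tau_{\max}}\int_{x_i}^{x_{i+1/2}}(\omega-P_D^\ast\omega)\,\mathrm{d}x,
\qquad
(\omega-P_D^\ast\omega)(x_{i-1/2})=-\frac{1}{\tau_{\max}}\int_{x_{i-1/2}}^{x_{i}}(\omega-P_D^\ast\omega)\,\mathrm{d}x .
\]
Take $\omega=\Delta_x^-u$ in the first identity and $\omega=\Delta_x^+u$ in the second. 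Both half-cells lie inside $I_i$, so Cauchy--Schwarz gives $|R_D^0(\Delta_x^\mp u)(x_{i\pm1/2})|\lesssim h^{-1/2}\|R_D^0(\Delta_x^\mp u)\|_i$, which is exactly the bound you need. A scaled trace inequality would also work, but it introduces an extra $h\|\partial_x R_D^0(\Delta_x^\pm u)\|_i$ term that the stated estimate does not contain.
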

\begin{proof}
	It suffices to prove the estimate for \(\widetilde{\mathcal T}_{i+1/2}(\phi_2;\bm R^l u)\), since the estimate for \(\widehat{\mathcal T}_i(\phi_1;\bm R^l u)\) is obtained in the same way on the primal mesh after interchanging the roles of \(C\) and \(D\). By the definition of \(\widetilde{\mathcal T}_{i+1/2}\), substituting \eqref{eq:1D_spatial} and the definition of \(\widetilde P_{i+\frac12}\), all terms containing \((R_D^l u)(x)\) cancel. Using \Cref{lem:shifting}, we obtain
	\[
	\begin{aligned}
		\widetilde{\mathcal T}_{i+1/2}(\phi_2;\bm R^l u)
		&=
		\int_{x_i}^{x_{i+1/2}} R_C^l(\Delta_x^+u)\Bigl(\frac{1}{\tau_{\max}}\phi_2+\partial_x\phi_2\Bigr)\,\mathrm{d}x
		-[R_C^l(\Delta_x^+u)]_{i+1}(\phi_2)_{i+1}
		\\
		&\quad+
		\int_{x_{i+1/2}}^{x_{i+1}} R_C^l(\Delta_x^-u)\Bigl(\frac{1}{\tau_{\max}}\phi_2+\partial_x\phi_2\Bigr)\,\mathrm{d}x
		+[R_C^l(\Delta_x^-u)]_i(\phi_2)_i .
	\end{aligned}
	\]
	Since \(\tau_{\max}=\mathcal O(h)\), the Cauchy--Schwarz inequality and the inverse inequality yield
	\[
	\bigl|\widetilde{\mathcal T}_{i+1/2}(\phi_2;\bm R^l u)\bigr|
	\lesssim
	h^{-1}\|R_C^l(\Delta_x^\pm u)\|_{i+1/2}\,\|\phi_2\|_{i+1/2}.
	\]
	This proves the second estimate. The first estimate follows analogously on the primal mesh.
\end{proof}
As a consequence, we obtain the following regularity estimates for the corrections.

\begin{proposition}\label[proposition]{prop:correctestimate}
	Let \(0\le l\le \min\{k,2\}\), and let \(v\in H^{k+1+l}(\Omega)\). Then
	\[
	\|R_C^{l}v\| \lesssim h^{k+1+l}\,\|v\|_{H^{k+1+l}(\Omega)},
	\qquad
	\|R_D^{l}v\| \lesssim h^{k+1+l}\,\|v\|_{H^{k+1+l}(\Omega)}.
	\]
\end{proposition}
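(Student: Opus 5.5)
We argue by induction on \(l\). The case \(l=0\) is \(\bm R^0u=\bm\eta\), and \Cref{lem:standard_app} gives \(\|\bm\eta\|\lesssim h^{k+1}\|v\|_{H^{k+1}(\Omega)}\).

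For \(l\ge1\), assume the bound holds at level \(l-1\) for all sufficiently smooth arguments. By \Cref{def:correctfunc} and the definition \eqref{equ:defT} of \(\widehat{\mathcal T}_i\), the right-hand side of the defining relation for \(R_C^l v\) is
\[
F_1:=-(R_C^{l-1}\partial_x v,\cdot)_i-\widehat{\mathcal T}_i(\cdot;\bm R^{l-1}v).
\]
This is a linear functional on \(\phi_1\in V_x^0\). By Riesz representation on \(\hat{\mathbb P}^k(I_i)\) it is given by some \(F_1\in V_x^0\) with
\[
\|F_1\|_i\le \|R_C^{l-1}\partial_x v\|_i+\sup_{\phi_1}\frac{|\widehat{\mathcal T}_i(\phi_1;\bm R^{l-1}v)|}{\|\phi_1\|_i}.
\]
\Cref{lem:projectlem} then gives \(\|R_C^l v\|_{L^\infty(I_i)}\lesssim h^{1/2}\|F_1\|_i\). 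Integrating over \(I_i\) gives \(\|R_C^lv\|_i\lesssim h\|F_1\|_i\), and summing over \(i\) gives \(\|R_C^lv\|\lesssim h\|F_1\|\). Everything therefore reduces to showing \(\|F_1\|\lesssim h^{k+l}\|v\|_{H^{k+1+l}}\).

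We bound the two pieces of \(F_1\) separately.

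\textbf{First piece.} The inductive hypothesis applied to \(\partial_x v\in H^{k+l}\) gives
\[
\|R_C^{l-1}\partial_xv\|\lesssim h^{k+l}\|\partial_x v\|_{H^{k+l}}\le h^{k+l}\|v\|_{H^{k+1+l}}.
\]

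\textbf{Second piece.} \Cref{coro:overlap} bounds it by \(h^{-1}\|R_D^{l-1}(\Delta_x^\pm v)\|\). Applying the inductive hypothesis to the argument \(\Delta_x^\pm v\) gives
\[
h^{-1}\cdot h^{k+l}\|\Delta_x^\pm v\|_{H^{k+l}}.
\]
Since \(\Delta_x^\pm v(x)=-\int_0^{\pm h/2}\partial_xv(x+s)\,ds\), we have \(\|\Delta_x^\pm v\|_{H^{k+l}}\lesssim h\|v\|_{H^{k+l+1}}\). The factor \(h\) cancels the \(h^{-1}\), and this piece is also \(\lesssim h^{k+l}\|v\|_{H^{k+1+l}}\).

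This gives \(\|R_C^l v\|\lesssim h^{k+1+l}\|v\|_{H^{k+1+l}}\). The dual component \(R_D^l v\) is handled identically using \(\widetilde{\mathcal T}_{i+1/2}\). It also follows directly from \Cref{lem:shifting}, since translation is an isometry in the periodic setting.

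\textbf{Main obstacle.} The main difficulty is making the induction consistent in regularity and in its quantifiers. The inductive statement must hold for every argument function with the stated Sobolev norm, not just for \(v\), so that it can be applied to \(\partial_xv\) and to \(\Delta_x^\pm v\). The difference-quotient step must also gain a full power of \(h\) while consuming exactly one derivative, so that the level-\(l\) estimate uses only \(H^{k+1+l}\).

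One further point needs care. \Cref{coro:overlap} is stated per cell, but the \(R_D^{l-1}(\Delta_x^\pm v)\) appearing there lives on the overlapping dual cells meeting \(I_i\). Each primal cell meets only two dual cells, so summing the squared local bounds over \(i\) gives the global bound with only a constant factor of overlap.
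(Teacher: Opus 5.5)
Your proposal is correct and is essentially the paper's proof: induction on \(l\), with \Cref{lem:projectlem} and \Cref{coro:overlap} giving \(\|R_C^l v\|\lesssim h\|R_C^{l-1}\partial_x v\|+\|R_D^{l-1}(\Delta_x^\pm v)\|\), and then the induction hypothesis applied to \(\partial_x v\) and \(\Delta_x^\pm v\) together with \(\|\Delta_x^\pm v\|_{H^m}\lesssim h\|v\|_{H^{m+1}}\). Your Riesz-representation step (needed because \Cref{lem:projectlem} is stated for \(F\in V_x^0\)) and your care with the induction quantifiers make explicit what the paper leaves implicit.
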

\begin{proof}
	We proceed by induction on \(l\). For \(l=0\), since \(\bm R^0v=v-\bm P^\ast v\), the conclusion follows immediately from the standard approximation property of \(\bm P^\ast\).
	
	Now let \(1\le l\le \min\{k,2\}\), and assume that the estimate holds for \(l-1\). By \cref{def:correctfunc} and the definition of \(\widehat{\mathcal T}_i\) in \eqref{equ:defT}, we have
	\[
	\widehat P_i(R_C^l v,\phi_1)
	=
	-(R_C^{l-1}\partial_x v,\phi_1)_i
	-
	\widehat{\mathcal T}_i(\phi_1;\bm R^{l-1}v),
	\qquad \forall \phi_1\in V_x^0.
	\]
	Applying \cref{lem:projectlem}, and then summing over all primal cells with the difference structure from \cref{coro:overlap}, we obtain
	\begin{equation}\label{eq:RC-global-recursion-new}
		\|R_C^l v\|
		\lesssim
		h\|R_C^{l-1}\partial_x v\|
		+
		\|R_D^{l-1}(\Delta_x^\pm v)\|.
	\end{equation}
	An analogous argument on the dual mesh yields
	\begin{equation}\label{eq:RD-global-recursion-new}
		\|R_D^l v\|
		\lesssim
		h\|R_D^{l-1}\partial_x v\|
		+
		\|R_C^{l-1}(\Delta_x^\pm v)\|.
	\end{equation}
	
	By the induction hypothesis,
	\(
	h\|R_C^{l-1}\partial_x v\|
	+
	h\|R_D^{l-1}\partial_x v\|
	\lesssim
	h^{k+1+l}\|v\|_{H^{k+1+l}(\Omega)}.
	\)
	Moreover, using
	\(
	\|\Delta_x^\pm v\|_{H^m(\Omega)}\lesssim h\|v\|_{H^{m+1}(\Omega)}\) for \(m\ge0,
	\)
	and applying the induction hypothesis again, we obtain
	\[
	\|R_C^{l-1}(\Delta_x^\pm v)\|
	+
	\|R_D^{l-1}(\Delta_x^\pm v)\|
	\lesssim
	h^{k+1+l}\|v\|_{H^{k+1+l}(\Omega)}.
	\]
	Substituting these bounds into \eqref{eq:RC-global-recursion-new} and \eqref{eq:RD-global-recursion-new} gives
	\[
	\|R_C^l v\|
	\lesssim
	h^{k+1+l}\|v\|_{H^{k+1+l}(\Omega)},
	\qquad
	\|R_D^l v\|
	\lesssim
	h^{k+1+l}\|v\|_{H^{k+1+l}(\Omega)}.
	\]
	This completes the induction.
\end{proof}
For locally constant test functions, the correction terms no longer satisfy an exact weak cancellation, because the full Galerkin orthogonality available in standard DG methods is lost. Instead, only an asymptotic weak cancellation remains. The following theorem shows that the lowest cancellation rate is \(k+3\), which limits the effective number of correction levels, as discussed in \Cref{rem:two-level-correction}.
\begin{theorem}[Asymptotic weak cancellation property]\label[theorem]{prop:correct_testfuncest}
	For any $\bm{\phi} \in \overline{V}_x \times \overline{W}_x$ and  \(l\in\{1,\dots,\min\{k,2\}\}\), the $l$-th correction satisfies
	\begin{subequations}\label{equ:correct_constant_test}
		\begin{align}
			\biggl|\sum_i \tilde{B}_{i+1/2}(\bm{R}^{l}u, \phi_2)\biggr| &\lesssim h^{\sigma_k} \|u\|_{H^{\sigma_k+1}}\,\|\phi_2\|, \\
			\biggl|\sum_i \hat{B}_{i}(\bm{R}^{l}u, \phi_1)\biggr| &\lesssim h^{\sigma_k} \|u\|_{H^{\sigma_k+1}}\,\|\phi_1\|.
		\end{align}
	\end{subequations}
\end{theorem}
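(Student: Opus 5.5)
We estimate $\hat B_i(\bm R^l u,1)$ cell by cell and then sum; the dual statement follows by interchanging $C$ and $D$. First observe that $\widehat P_i(\cdot;1)=0$ (\Cref{rem:constanttest}). Hence for $\phi_1\equiv1$ on $I_i$ we have $\hat B_i(\bm R^lu,1)=\widehat{\mathcal T}_i(1;\bm R^lu)$. By the explicit form obtained in the proof of \Cref{coro:overlap} (with $\partial_x\phi=0$), this equals
\[
\frac{1}{\tau_{\max}}\Bigl(\int_{x_{i-1/2}}^{x_i}R_D^l(\Delta_x^-u)\,dx+\int_{x_i}^{x_{i+1/2}}R_D^l(\Delta_x^+u)\,dx\Bigr)-[R_D^l(\Delta_x^+u)]_{i+1/2}+[R_D^l(\Delta_x^-u)]_{i-1/2}.
\]
The naive bound from \Cref{coro:overlap} and \Cref{prop:correctestimate} gives only $h^{-1}\cdot h\cdot h^{k+1+l}=h^{k+1+l}$. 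This is $h^{k+2}$ for $l=1$, so one more order must be gained. For $l=2$ the bound is already $h^{k+3}\ge h^{\sigma_k}$ when $k\ge2$, and $l=2$ does not occur when $k=1$. The real work is therefore $l=1$ with target $h^{\min\{2k+1,k+3\}}$.

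For $l=1$ we exploit the recursion in \Cref{def:correctfunc} by testing with the odd linear function $\psi=\frac{x-x_{i+1/2}}{h/2}$ on $I_{i+1/2}$. This mimics the proof of \Cref{prop:testfuncest}, where such a test expressed the point value $(\eta_D)_{i+1/2}$ through the half-cell integrals of $\eta_D$.

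First, $\psi$ has zero average, so it is admissible in $W_x^0$. The defining relation for $R_D^1$ tested against $\psi$ reads
\[
\widetilde P_{i+1/2}(R_D^1u;\psi)=-(\eta_D\partial_xu,\psi)_{i+1/2}-\widetilde{\mathcal T}_{i+1/2}(\psi;\bm\eta).
\]
The left side, evaluated with \eqref{eq:Ptilde-shift}, expresses $(R_D^1u)_{i+1/2}$ minus $\tau_{\max}^{-1}$ times the half-cell integral of $R_D^1u$. This is exactly the combination appearing in $\hat B_i(\bm R^1u,1)$, up to the shift of \Cref{lem:shifting} and the argument $\Delta_x^\pm u$ in place of $u$.

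Substituting, $\hat B_i(\bm R^1u,1)$ becomes a sum of two groups. The first is the terms $\tau_{\max}\cdot h^{-1}\cdot(\eta_D\partial_x w,\psi)$ with $w=\Delta_x^\pm u$. The second is $\widetilde{\mathcal T}(\psi;\bm\eta)$ applied to $\Delta_x^\pm u$.

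We handle the two groups as follows. The first group is $O(h^{k+1})\cdot O(h)\cdot O(h)$ per unit of $\|\phi\|$. Summing over $i$, the contributions from the left and right neighbours form a difference in $i$; this follows from $\Delta_x^+$ and $\Delta_x^-$ appearing with opposite signs. Summation by parts (periodicity) moves this difference onto $\phi_1$ or onto $u$, giving one more factor $h$. The $\widetilde{\mathcal T}(\psi;\bm\eta)$ group carries a double difference $\Delta_x^\pm\Delta_x^\pm u$, which gives $h^2\cdot h^{k+1}=h^{k+3}$ once combined with \Cref{coro:overlap}.

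Where the $k+3$ rate might fall short, the limit $2k+1$ enters only for $k=1$. There we use that $\eta$ is orthogonal to enough polynomials through the Legendre structure of \Cref{lem:projectlem}. Concretely, we expand $u$ in a Taylor polynomial on each cell; \Cref{prop:roots} and the exactness of $P^*$ on $\mathbb P^k$ make the leading term $O(h^{k+1})$ with an odd or even symmetry that cancels under $\psi$, and this yields $h^{2k+1}$ when $k=1$.

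The main obstacle is the bookkeeping in this step: showing that after summation the leading $O(h^{k+2})$ contributions telescope exactly. This requires the uniform mesh, so that the cell-local Taylor coefficients are identical up to shifts. We expect to verify this by computing the leading-order local residual on the reference cell as $c_k h^{k+2}u^{(k+2)}(x_i)$ and checking that the global sum is $\sum_i c_k h^{k+2}(u^{(k+2)}(x_i)-u^{(k+2)}(x_{i+1/2}))\phi_1$-type. The bound then follows from Cauchy--Schwarz with $\|u\|_{H^{\sigma_k+1}}$.
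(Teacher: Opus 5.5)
Your reduction matches the paper's. Since $\widehat P_i(\cdot\,;1)=0$, the constant-test residual is $\widehat{\mathcal T}_i(1;\bm R^1u)$. Its half-cell-integral/point-value pairs are exactly $\tfrac12\widetilde P_{i\pm1/2}(R_D^1\,\cdot\,;\psi)$ on the two neighbouring dual cells (this is \eqref{eq:B-interface}), and the recursion of \Cref{def:correctfunc} then applies.

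The gap is in where the extra power of $h$ comes from.
\begin{itemize}
\item If you track the shifts, the two neighbouring contributions combine into the single term $\tfrac12\widetilde P_{i+1/2}\bigl(R_D^1(u-u(\cdot-h));\psi\bigr)$. The ``difference in $i$'' and the $\Delta_x^\pm u$ already inside $\widehat{\mathcal T}_i$ are one and the same first difference. You cannot spend it once inside $w$ and again through summation by parts.
\item Moving the difference onto $\phi_1$ gains nothing. $\phi_1\in\overline V_x$ is an arbitrary piecewise constant, and its jumps are not $O(h)\|\phi_1\|$.
\item \Cref{coro:overlap} carries a factor $h^{-1}$. So the double difference in $\widetilde{\mathcal T}_{i+1/2}(\psi;\bm\eta w)$ yields only $h^{-1}\cdot h^{k+3}=h^{k+2}$, not $h^{k+3}$.
\end{itemize}
Hence the estimates you invoke bound each of your two groups only by $h^{k+2}\|\phi_1\|$, which is no better than the naive bound. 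The order $k+3$ comes only from a leading-order cancellation \emph{between} the two groups, and your proposal never exhibits it.

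Put differently, $u\mapsto\hat B_i(\bm R^1u,1)$ annihilates $\mathbb P^{k+2}$ for a simple reason: $\bm R^1$ vanishes on $\mathbb P^{k+1}$. What must be shown is that it also annihilates $\mathbb P^{k+3}$. That means the coefficient of $u^{(k+3)}$, not of $u^{(k+2)}$, must vanish outright; it cannot merely telescope against an arbitrary $\phi_1$. Your closing plan defers exactly this step.

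The missing ingredient is a quadratic test function.
\begin{itemize}
\item For $k\ge2$, $\hat\phi_{i+1/2}=\bigl(2(x-x_{i+1/2})/h\bigr)^2\in\mathbb P^k$ is admissible in the defining relation of $P_D^*$. Hence $\widetilde P_{i+1/2}\bigl((I-P_D^*)\partial_xu;\hat\phi_{i+1/2}\bigr)=0$.
\item Identity \eqref{eq:P2Ptilde} then rewrites $\bigl((I-P_D^*)\partial_xu,\tilde\phi_{i+1/2}\bigr)_{i+1/2}$ as shifted half-cell integrals of $(I-P_C^*)[\partial_xu(\cdot\pm h/2)]$ against $\frac{h}{2\tau_{\max}}\tilde\phi_{i+1/2}+1$.
\item Add this to $\widetilde{\mathcal T}_{i+1/2}(\tilde\phi_{i+1/2};\bm\eta)$ and apply the neighbour shift with $v=\tfrac12(u(\cdot-h)-u)$. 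The result is $(I-P_C^*)\bigl(\partial_xv(x\pm h/2)\pm\frac{\Delta_x^\pm v}{h/2}\bigr)$, plus point terms that are $(I-P_C^*)$ of second differences of $v$.
\item This is a derivative minus a difference quotient of a first difference, and it is $O(h^{k+3})$.
\end{itemize}

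This also reverses your case analysis. For $k=1$ nothing beyond the naive bound is needed: $\sigma_1=3=k+2$, and \Cref{coro:overlap} with \Cref{prop:correctestimate} already give $h^{3}$. Your Legendre/parity argument there is superfluous, and as sketched it is unsupported. For $k\ge2$, $l=1$, the quadratic test is precisely the new input that makes $k+3$ attainable.
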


\begin{proof}
		When $k=1$, \Cref{coro:overlap} and \Cref{prop:correctestimate} imply
	\[
\begin{aligned}
		\biggl|\sum_i \hat{B}_i(\bm{R}^1 u, \phi_1)\biggr| =	\biggl|\sum_i \phi_1[(R_D^1 u)_{i-\frac12}-(R_D^1 u)_{i+\frac12}]\biggr|\lesssim h^{3} \|\phi_1\| \|u\|_{H^{3}}.
\end{aligned}
	\]
	For $k \ge 2$, on the dual cell $I_{i\pm 1/2}$, consider the linear test function $	\tilde{\phi}_{i\pm\frac12}(x)$ and the quadratic test function $\hat{\phi}_{i\pm\frac12}(x)$
	\[
	\tilde{\phi}_{i\pm\frac12}(x) := \frac{2(x - x_{i\pm 1/2})}{h} \in W_x^0,\quad  \hat{\phi}_{i\pm\frac12}(x) := \left( \frac{2(x - x_{i\pm1/2})}{h} \right)^2 \in \mathbb{P}^2(I_{i\pm1/2}) \subset W_h.
	\]
	
	Consequently, the boundary term in $\widetilde{P}_{i+1/2}(\eta_D, \cdot)$ vanishes since	this function satisfies \[\hat{\phi}_{i+\frac12}(x_{i+1}^{-}) - \hat{\phi}_{i+\frac12}(x_{i}^{+}) = 1 - 1 = 0,\qquad \hat{\phi}_{i+\frac12}(x \pm h/2) - \hat{\phi}_{i+\frac12}(x) = \pm 2 \tilde{\phi}_{i+\frac12} + 1,\]
	and $\hat{\phi}_{i+\frac12}'(x \pm h/2) = \frac{4}{h}  \tilde{\phi}_{i+\frac12}(x \pm h/2).$
	For any $w_D \in L^2(\Omega)$, substituting these into \eqref{eq:Ptilde-shift} and regrouping yields the identity
	\begin{equation}\label{eq:P2Ptilde}
		\resizebox{0.99\hsize}{!}{$
			\begin{aligned}
				\widetilde{P}_{i+1/2}(w_D; \hat{\phi}_{i+\frac12}) &= \frac{1}{\tau_{\max}} \int_{x_i}^{x_{i+1}} w_D \,\mathrm{d}x  + \frac{2}{\tau_{\max}} \left( \int_{x_i}^{x_{i+1/2}}  \tilde{\phi}_{i+\frac12} w_D \,\mathrm{d}x - \int_{x_{i+1/2}}^{x_{i+1}} \tilde{\phi}_{i+\frac12} w_D \,\mathrm{d}x \right) \\
				&\quad + \frac{4}{h} \int_{x_i}^{x_{i+1}}  \tilde{\phi}_{i+\frac12} w_D \,\mathrm{d}x + \frac{4}{h} \left( \int_{x_i}^{x_{i+1/2}} w_D \,\mathrm{d}x - \int_{x_{i+1/2}}^{x_{i+1}} w_D \,\mathrm{d}x \right).
			\end{aligned}$}
	\end{equation}
	(The same formula holds on $I_{i-1/2}$ after an index shift.)
	
	Now set $w_D = (I - P_D^*) \partial_x u$. After shifting the half-cell integrals in \eqref{eq:P2Ptilde}, we obtain
	\[
	\begin{aligned}
		\bigl((I - P_D^*) \partial_xu,  \tilde{\phi}_{i+\frac12}\bigr)_{i+ 1/2} &= \int_{x_i}^{x_{i+1/2}} (I - P_C^*) [\partial_x u(x+\frac{h}{2})] \Bigl( \frac{h}{2\tau_{\max}}  \tilde{\phi}_{i+\frac12} +{ 1} \Bigr) \,\mathrm{d}x \\
		&\quad - \int_{x_{i+1/2}}^{x_{i+1}} (I - P_C^*)[ \partial_x u(x-\frac{h}{2})] \Bigl( \frac{h}{2\tau_{\max}}  \tilde{\phi}_{i+\frac12} + { 1} \Bigr) \,\mathrm{d}x.
	\end{aligned}
	\]
	 A direct computation gives
	\begin{equation}\label{eq:B-interface}
		\hat{B}_i(\bm{R}^1 u, 1) = \frac{1}{2} \Bigl[ \widetilde{P}_{i+1/2}(R_D^1 u, \tilde{\phi}_{i+\frac12}) - \widetilde{P}_{i-1/2}(R_D^1 u,\tilde{\phi}_{i-\frac12}) \Bigr],
	\end{equation}
	with
	\(
	\widetilde{P}_{i\pm 1/2}(R_D^1 u,\tilde{\phi}_{i\pm\frac12}) = -\bigl((I - P_D^*) \partial_x u, \tilde{\phi}_{i\pm\frac12}\bigr)_{i\pm 1/2} - \widetilde{\mathcal{T}}_{i\pm 1/2}{(\tilde{\phi}_{i\pm\frac12};\bm{\eta})}.
	\)
	
	Shifting $I_{i-1/2}$ to $I_{i+1/2}$ in \eqref{eq:B-interface}, and letting $v(x) = \frac{1}{2}(u(x-h) - u(x))$, we further obtain
	\[
	\begin{aligned}
		\hat{B}_i(\bm{R}^1 u, 1) &= \int_{x_i}^{x_{i+1/2}} (I - P_C^*) \Bigl( \partial_x v(x + h/2) + \frac{\Delta_x^+ v}{h/2} \Bigr) \Bigl[ \frac{h}{2\tau_{\max}}\tilde{\phi}_{i+\frac12} + 1 \Bigr] \,\mathrm{d}x \\
		&\quad - \int_{x_{i+1/2}}^{x_{i+1}} (I - P_C^*) \Bigl( \partial_x v(x - h/2) - \frac{\Delta_x^- v}{h/2} \Bigr) \Bigl[ \frac{h}{2\tau_{\max}}\tilde{\phi}_{i+\frac12} + 1 \Bigr] \,\mathrm{d}x \\
		&\quad - \Bigl[ (I - P_C^*) \Delta_x^+ \Delta_x^- v \Bigr]_{i+1}-\Bigl[ (I - P_C^*) \Delta_x^- (v(x)-v(x-h)) \Bigr]_{i+1},
	\end{aligned}
	\]
	where the difference $\Delta_x^\pm v$ is defined in \eqref{equ:defdifference}. 
	By a Taylor expansion, \cref{lem:standard_app}, and inverse inequalities,
	\[
	\left\| (I - P_C^*) \Bigl( \partial_x v(x \pm h/2) \pm \frac{\Delta_x^\pm v}{h/2} \Bigr) \right\| \lesssim h^{k+3} \|u\|_{H^{\sigma_k+1}},
	\]
	and
	\[
	\sum_i\biggl|  \Bigl[ (I - P_C^*) [\Delta_x^+ \Delta_x^- v] \Bigr]_{i+1} \biggr|+ \sum_i\biggl| \Bigl[ (I - P_C^*) \Delta_x^- (v(x)-v(x-h)) \Bigr]_{i+1} \biggr|\lesssim h^{k+3} \|u\|_{{ H^{\sigma_k+1}}} \|1\|.
	\]
	Combining the above  bounds for $\hat{B}_i(\bm{R}^1 u, \phi_1)$ gives 
	$
	\bigl| \sum_i \hat{B}_i(\bm{R}^1 u, \phi_1) \bigr| \lesssim h^{k+3} \|u\|_{{ H^{\sigma_k+1}}} \|\phi_1\|.
	$ 
	The estimate for $\bigl|\sum_i \tilde{B}_{i+1/2}(\bm{R}^1 u, \phi_2)\bigr|$ follows analogously on the dual mesh. The result for $l=2$ follows from \cref{coro:overlap} and \cref{prop:correctestimate}.
\end{proof}
\subsection{Proof of \cref{thm:supercon1D}}

Since both the exact solution $\bm{u}$ and the CDG solution $\bm{u}_h$ satisfy \eqref{equ:semiCDG}, subtracting the two and using the decomposition \eqref{equ:deferror} yields the error equation
\(
(\partial_t \zeta_C, \phi_1)_i - \hat{B}_i(\bm{\zeta}, \phi_1) = (\partial_t \eta_C, \phi_1)_i - \hat{B}_i(\bm{\eta}, \phi_1).
\)
After $m$ correction steps, the corrected error equation is formulated as
\[
(\partial_t \hat{\zeta}_C, \phi_1)_i - \hat{B}_i(\hat{\bm{\zeta}}, \phi_1) = \mathcal{C}_i^1(\phi_1) + \mathcal{C}_i^2(\phi_1) + \mathcal{C}_i^3(\phi_1),
\]
where the residual terms are defined as:
\begin{subequations}\label{eq:Ci-terms}
	\begin{align}
		\mathcal{C}_i^1(\phi_1) &:= (\partial_t R_C^{\min\{k,2\}}u, \phi_1)_i, \label{equ:C1} \\
		\mathcal{C}_i^2(\phi_1) &:= -\sum_{l=1}^{\min\{k,2\}} \Bigl[ \widehat{P}_i(R_C^{l}u, \phi_1) - (\partial_t R_C^{l-1}u, \phi_1)_i + \widehat{\mathcal{T}}_i(\phi_1; \bm{R}^{l-1}u) \Bigr], \label{equ:C2} \\
		\mathcal{C}_i^3(\phi_1) &:= -\widehat{\mathcal{T}}_i(\phi_1; \bm{R}^{\min\{k,2\}}u). \label{equ:C3}
	\end{align}
\end{subequations}

To conclude the proof, it suffices to show that
\(
\sum_i \bigl(|\mathcal{C}_i^1(\phi_1)| + |\mathcal{C}_i^2(\phi_1)| + |\mathcal{C}_i^3(\phi_1)|\bigr) \lesssim h^{\sigma_k} \|\phi_1\|.
\)
Once this estimate is established, the same argument applies on the dual mesh. Taking
\(\bm\phi=\hat{\bm\zeta}\) and using the stability property in \Cref{prop:L2stable}, we obtain
\(
\frac{1}{2}\frac{d}{dt}\|\hat{\bm\zeta}\|^2
\lesssim
h^{\sigma_k}\|\hat{\bm\zeta}\|.
\)
The estimate in \Cref{thm:supercon1D} then follows from Gr\"onwall's inequality, together with the superconvergent initialization bound provided by \Cref{prop:correctestimate}.

By \cref{prop:correctestimate} and \cref{coro:overlap}, we have the bound
\[
\sum_i \bigl(|\mathcal{C}_i^1(\phi_1)| + |\mathcal{C}_i^3(\phi_1)|\bigr) \lesssim h^{\sigma_k} \|\phi_1\|.
\]

\subsubsection*{Estimate of $\sum_i \mathcal{C}_i^2(\phi_1)$:}
We decompose the test function space and use $\partial_t \bm{P}^*u = -\bm{P}^*\partial_xu$ and $\partial_t \bm{R}^l u = -\bm{R}^l\partial_xu$, which follow from \eqref{equ:conservation_law}. For $\phi_1 \in V_x^0$, \cref{def:correctfunc} implies that
\(
\mathcal{C}_i^2(\phi_1) = 0.
\)

For the remaining component $\phi_1 \in \overline{V}_x$, \cref{prop:testfuncest} handles the $\bm{R}^0u=\bm{\eta}$ term and \cref{prop:correct_testfuncest} handles the correction terms. Hence
\[
\sum_i |\mathcal{C}_i^2(\phi_1)|
\lesssim
\sum_i \biggl| \sum_{q=0}^{\min\{k,2\}-1} \widehat{\mathcal{T}}_i(\phi_1; \bm{R}^{q}u) \biggr|
\lesssim h^{\sigma_k} \|\phi_1\|.
\]
Combining these estimates yields the desired result.

\section{Multidimensional superconvergent analysis}\label{sec:2DCDG}

 The extension from 1D to multiple dimensions is nontrivial. In 1D, superconvergence relies on the weak cancellation property from \Cref{prop:testfuncest}. For the tensor product projection
	$
	\bm Q^*=\prod_{i=1}^d \bm P^{x_i},
	$ on Cartesian meshes, the 1D weak cancellation applied to a single directional component does not by itself yield the observed $h^{k+2}$ superconvergence residual order. A different mechanism is therefore required.
	
	Our approach is to separate the dominant directional errors and recover cancellation only after summation over the directional projection and correction residuals. Formally, we decompose the  projection error into leading directional projection errors and mixed-direction terms of order \(2k+2\), i.e.
	\begin{equation}
		I-\prod_{i=1}^d \bm P^{x_i}
		=
		\sum_{\emptyset\neq S\subset\{1,\dots,d\}}(-1)^{|S|+1}\prod_{i\in S}(I-\bm P^{x_i}).
	\end{equation}
	
	For clarity, we present the analysis in 2D. With projection \(\bm Q^*=\bm P^x\bm P^y\), we construct directional correction functions \(\bm X^l u\) and \(\bm\Gamma^l u\) in \Cref{def:correctfun2D} to remove the $x-$ and $y-$directional errors. The key new insight is the HOCA mechanism (\Cref{prop:correct_testfuncest2D}): an individual directional projection or correction residual need not have the desired superconvergent order, but the aggregate of the directional projection residual and all same-direction correction residuals does. This is the point at which the present CDG analysis departs from both the 1D CDG optimal-error theory and the tensor-product DG superconvergence theory in \cite{cao_shu_yang_zhang_2015,cao2018superconvergence,cao_zhang_zou_2014}.

\begin{theorem}[Corrected-initialization superconvergence of the 2D semidiscrete CDG method]\label[theorem]{thm:supercon2D}
	Consider the semidiscrete $\mathbb{Q}^k$ CDG scheme for \eqref{equ:conservation_law} on a 2D uniform Cartesian overlapping mesh. Let $\bm{Q}^\ast$ denote the projection from \Cref{def:projection2D}, and let $\{\bm{X}^l,\bm{\Gamma}^l\}$ denote the directional corrections from \Cref{def:correctfun2D}. Define
		\[
		\bm{\zeta}=\bm{u}_h - \bm{Q}^\ast u,
		\qquad
		\hat{\bm{\zeta}} := \bm{\zeta}+ \sum_{l=1}^{\min\{k,2\}} (\bm{X}^lu + \bm{\Gamma}^lu).
		\]
		If $u \in C^1([0,T]; H^{\sigma_k+1}(\Omega))$ and $\|\hat{\bm{\zeta}}(\cdot,0)\| \lesssim h^{\sigma_k}$, then
		\[
		\|\hat{\bm{\zeta}}(\cdot,T)\| \lesssim h^{\sigma_k}, \qquad \sigma_k := \min\{2k+1, k+3\}.
		\]
\end{theorem}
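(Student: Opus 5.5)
The proof will follow the four-step structure used for \Cref{thm:supercon1D}. The main changes are in how the correction functions are built and in the residual estimate for locally constant test functions. The objects involved are the tensor-product projection $\bm Q^\ast=\bm P^x\bm P^y$ of \Cref{def:projection2D} and the directional corrections $\bm X^l u$, $\bm\Gamma^l u$ of \Cref{def:correctfun2D}.

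\textbf{Step 1: decompose the projection error.} We split $\bm\eta:=\bm u-\bm Q^\ast u$ using the inclusion--exclusion identity
\[
I-\bm P^x\bm P^y=(I-\bm P^x)+(I-\bm P^y)-(I-\bm P^x)(I-\bm P^y).
\]
This gives an $x$-directional error $\bm\eta^x$, a $y$-directional error $\bm\eta^y$, and a mixed term $\bm\eta^{xy}$. The mixed term should satisfy $\|\bm\eta^{xy}\|\lesssim h^{2k+2}$ by tensorizing \Cref{lem:standard_app}. Combined with \Cref{lem:weaklybdd}, this makes its contribution $\mathcal O(h^{2k+1})\|\bm\phi\|$, which is admissible.

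The error equation for $\bm\zeta$ is subtracted as in 1D. We then add $\sum_l(\bm X^lu+\bm\Gamma^lu)$ and use $\partial_t u=-(\partial_x+\partial_y)u$, so that $\partial_t$ commutes with $\bm Q^\ast$, $\bm X^l$ and $\bm\Gamma^l$. The result is a corrected error equation for $\hat{\bm\zeta}$ whose right-hand side splits into residuals $\mathcal C^1,\mathcal C^2,\mathcal C^3$ for each direction, as in \eqref{eq:Ci-terms}. The $\mathcal C^1$ term is the time derivative of the highest correction. The $\mathcal C^3$ term is the leftover residual $\widehat{\mathcal T}$ of the highest correction.

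\textbf{Step 2: bound the corrections and the easy residuals.} We first prove $\|\bm X^lu\|+\|\bm\Gamma^lu\|\lesssim h^{k+1+l}\|u\|_{H^{k+1+l}}$ by the induction of \Cref{prop:correctestimate}. The tool is a directional version of \Cref{lem:projectlem}: in the $x$-direction, the test space $V_x^0\times W_x^0$ and the 1D solve are applied line by line in $y$. We also need the shifting and difference structure of \Cref{lem:shifting} and \Cref{coro:overlap}, now in both $x$ and $y$; each half-cell shift $\Delta^\pm$ gains one power of $h$. With these, $\mathcal C^1$ and $\mathcal C^3$ are bounded by $h^{\sigma_k}\|\bm\phi\|$ exactly as in 1D.

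\textbf{Step 3: the HOCA estimate for $\mathcal C^2$ (main obstacle).} The term $\mathcal C^2$ is where the multidimensional argument departs from 1D. The defining relations of $\bm X^l$ kill $\mathcal C^2$ for test functions in $V_x^0\times W_x^0$, and those of $\bm\Gamma^l$ kill it for test functions in $V_y^0\times W_y^0$. What remains, for the $x$-direction, are test functions in $\overline V_x\times\overline W_x$. Such functions are constant in $x$ on each cell but are still general polynomials in $y$. Two difficulties follow:
\begin{itemize}
\item The exact cancellation of \Cref{prop:testfuncest} is lost for $\bm\eta^x$ alone. The $y$-transport part of the operator and the $1/\tau_{\max}$ coupling act on $\bm\eta^x$, which is only the $x$-projection of the tensor product and has no single variational characterization.
\item The individual residuals $\sum\hat B(\bm\eta^x,\phi)$ and $\sum\hat B(\bm X^lu,\phi)$ are therefore only of order $h^{k+1}$ or $h^{k+2}$.
\end{itemize}

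I would write $\hat B=\hat B^x+\hat B^y+(\text{coupling})$ and group $\bm\eta^x+\sum_l\bm X^lu$ together. The $x$-part $\hat B^x$ with an $x$-constant test function is handled by applying the 1D argument of \Cref{prop:testfuncest} and \Cref{prop:correct_testfuncest} on each horizontal line. This relies on the linear and quadratic test functions $\tilde\phi$, $\hat\phi$ being admissible in $x$ for every fixed $y$-polynomial factor.

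The $y$-part and the coupling terms would be handled next. Integrating by parts in $y$ turns them into $\partial_y$ or $\Delta_y^\pm$ applied to the aggregate. Because the $y$-projection commutes with the $x$-directional constructions, this aggregate is $(I-\bm P^x)$ plus corrections applied to a $y$-difference of $u$. That gains one power of $h$ for each level, and telescoping across levels should yield $h^{\sigma_k}$ only for the sum, not for each term. The step I expect to be hardest is verifying this telescoping, in particular that the transverse $y$-flux terms of consecutive correction levels cancel. If the direct grouping fails, my fallback is to enlarge the definition of $\bm X^l$ so that its right-hand side also absorbs the $y$-transport of $\bm X^{l-1}$.

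\textbf{Step 4: energy estimate.} Summing the three residual bounds gives $\sum_{\text{cells}}|\mathcal C^1|+|\mathcal C^2|+|\mathcal C^3|\lesssim h^{\sigma_k}\|\bm\phi\|$ on both meshes. Taking $\bm\phi=\hat{\bm\zeta}$ and using \Cref{prop:L2stable} yields
\[
\tfrac12\tfrac{d}{dt}\|\hat{\bm\zeta}\|^2\lesssim h^{\sigma_k}\|\hat{\bm\zeta}\|.
\]
Gr\"onwall's inequality together with the assumption $\|\hat{\bm\zeta}(\cdot,0)\|\lesssim h^{\sigma_k}$ then gives the claim of \Cref{thm:supercon2D}.
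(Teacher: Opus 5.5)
\textbf{Where the proposal has a gap: Step 3.} Your Steps 1, 2 and 4 follow the paper's route. That is, you split the error directionally and absorb $\tilde{\bm\eta}$ by \Cref{lem:weaklybdd}, bound the corrections by directional induction, and close with \Cref{prop:L2stable} and Gr\"onwall. The gap is in Step 3. The HOCA estimate for test functions constant in $x$ is the only genuinely two-dimensional content of the theorem, and your proposal leaves it as an unverified hope resting on a mechanism that is not the right one.

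First, you cannot quote \Cref{prop:testfuncest} and \Cref{prop:correct_testfuncest} line by line for the $x$-part of $\widehat B_{i,j}(\bm X^lu,\psi)$. The pre-corrections $\bm\chi^l$ of \Cref{def:correctfun2D} are not the 1D corrections $\bm R^l$ on horizontal lines. Their recursion contains the transverse sources $-\widehat{\mathcal D}_i^x(\bm\chi^{l-2}\partial_yu,\cdot)$ and $\Delta_x^\pm(\chi_C^{l-1}\partial_yu)$. The $\Delta_x^\pm$ source alone contributes to the $x$-part precisely the term $\int_{I_i}\chi_D^{l-1}\partial_yu\,\mathrm dx$ in \Cref{lem:directional-local-identity}, which is not of higher order. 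Only $\bm\chi^0=(\bm I-\bm P^x)u$ is a genuinely 1D object, and for it $\widehat B_i^x(\bm\chi^0u,1)=0$ holds line by line. What breaks exact cancellation is therefore the transverse term, not the $1/\tau_{\max}$ coupling.

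Second, integrating by parts in $y$ (legitimate for $\bm\chi^lu$, which is smooth in $y$) gives $-\int_{C_{i,j}}\chi_D^l(\partial_yu)\,\psi\,\mathrm dx\,\mathrm dy$, i.e.\ $\bm\chi^l$ applied to $\partial_yu$. No $y$-difference appears, and no power of $h$ is gained from it. The only gain available for this term alone comes from the $x$-shift structure, because the primal cell $I_i$ straddles two dual cells. That gain leaves the $l=0$ term of order $h^{k+2}$ in general, short of $h^{\sigma_k}=h^{k+3}$ for $k\ge2$.

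\textbf{How the paper closes this step.} The cancellation that removes this term is a cross-cancellation between your two pieces, not a telescoping among $y$-fluxes alone. In \Cref{lem:directional-local-identity} the recursion for $\bm\chi^l$ is evaluated at the linear dual test function $\phi^x_{i\pm1/2}$. Using $\phi^x(x\pm h_x/2)=\phi^x(x)\pm1$ and the zero dual-cell averages, the $x$-part of the level-$l$ source reproduces $\int_{I_i}\chi_D^{l-1}\partial_yu\,\mathrm dx$, which cancels the $y$-flux of level $l-1$. Summing over $l$ leaves only $-\int\chi_D^m\partial_yu=\mathcal O(h^{k+1+m})$.

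Your sketch also does not address the constant-test remnants $\mathcal I^l_{i+1/2}$. These satisfy $\mathcal I^l_{i+1/2}+\mathcal I^{l-1}_{i+1/2}=\mathcal A^{l-1,x}_{i+1/2}$ with $\mathcal I^0=\mathcal I^1=0$, so their sum is either $0$ or the first-level 1D residual $\widehat B_i^x(R^1u,1)$. That residual and the $l=1$ part of $R^l_{i+1/2}$ are the only places where \Cref{prop:correct_testfuncest} is invoked; the $l=2$ part is bounded directly (\Cref{lem:constant-test-aggregation,lem:grouped-RI-estimate}).

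Your fallback of enlarging $\bm X^l$ to absorb the $y$-transport is what \Cref{def:correctfun2D} already does. Since the theorem is stated for these corrections, verifying this aggregated identity is exactly the missing step. Note also that the identity holds for $\bm\chi^lu$, not for $\bm X^lu=\bm G^y\bm\chi^lu$. You must therefore first split off $(\bm G^y-\bm I)\bm\chi^lu$ and bound it by weak boundedness, as in \eqref{equ:transverse--GLest}.
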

As in 1D, the proof starts from the error equation. The 2D setting requires additional work because the tensor-product projection lacks a simple one-cell variational
characterization analogous to the 1D LSZ projection. Although the projection error can be split into directional components, these components remain coupled through the transverse variable in the CDG residual and thus cannot be treated as independent 1D errors. The weak cancellation used in the 1D argument is therefore unavailable term by term. HOCA restores the required asymptotic weak cancellation only after aggregation. Apart from this new ingredient, the proof follows the same broad strategy, so we emphasize the parts specific to the 2D case.
\begin{description}
	\item[Step 1:] In the error equation, we decompose the projection error \(\bm{\eta}\) as
	\begin{equation}\label{equ:splitting}
		\bm{\eta} = \bm{\chi}u + \bm{\gamma}u - \tilde{\bm{\eta}},
	\end{equation}
	where
	\(
	\bm{\chi}u := (\bm I-\bm P^x)u,\quad
	\bm{\gamma}u := (\bm I-\bm P^y)u,\quad
	\tilde{\bm{\eta}} := (\bm I-\bm P^x)\bigl[(\bm I-\bm P^y)u\bigr].
	\)
	
	\item[Step 2:] Construct the directional correction functions \(\bm X^l u\) and \(\bm\Gamma^l u\) (see \cref{def:correctfun2D}) and define
	$\hat{\bm{\eta}}_x = \sum_{l=0}^{\min\{k,2\}}\bm X^l u$, $\hat{\bm{\eta}}_y = \sum_{l=0}^{\min\{k,2\}}\bm \Gamma^l u.$ By utilizing the Gauss--Lobatto projection, the initial terms $\bm{X}^0u$ and $\bm{\Gamma}^0u$ are designed to be continuous in the $y$- and $x$-directions, respectively.  
	
	\item[Step 3:] Establish error estimates for each correction term. For constant test functions in one coordinate direction, the leading-order residual terms are eliminated by the correction construction, as in the 1D analysis. The genuinely new point is the contribution from constant test functions, for which the weak cancellation is no longer available for each directional projection error separately and is recovered only after aggregation through the HOCA mechanism in \cref{prop:correct_testfuncest2D}.
\end{description}

The desired superconvergent estimate then follows from the above bounds together with stability and Gr\"onwall's inequality.

\begin{corollary}[Cell-average superconvergence in 2D]\label[corollary]{coro:supercon2D}
	Suppose the scheme is initialized with
	\(
	\bm{u}_h(\cdot,0)=\bm{Q}^\ast u(\cdot,0)
	-\bm{X}^1(u(\cdot,0))-\bm{\Gamma}^1(u(\cdot,0)).
	\)
	Under the assumptions of \Cref{thm:supercon2D}, the primal-mesh cell-average error satisfies
	\[
	e_{\mathrm{avg}}
	:= \left( h_xh_y\sum_{i,j}
	\left(\frac{1}{|C_{i,j}|}\int_{C_{i,j}} (u-u_h^C)\,dx\,dy\right)^2 \right)^{1/2} \lesssim h^{\sigma_k}.
	\]
\end{corollary}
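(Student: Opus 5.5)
The plan is to mirror the proof of \Cref{coro:supercon1D}: reduce the cell-average error to the cell averages of the corrected primal error, bound those by the $L^2$ norm, and invoke \Cref{thm:supercon2D}. Write $\zeta_C:=u_h^C-Q_C^\ast u$ and
\[
\hat\zeta_C:=\zeta_C+\sum_{l=1}^{\min\{k,2\}}\bigl(X_C^l u+\Gamma_C^l u\bigr).
\]
Then on each primal cell
\[
u-u_h^C=(u-Q_C^\ast u)-\hat\zeta_C+\sum_{l}\bigl(X_C^l u+\Gamma_C^l u\bigr).
\]

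The first step is to verify two cell-average properties, each needing a short argument from the 2D definitions.
\begin{itemize}
\item \textbf{$Q_C^\ast=P^xP^y$ preserves cell averages on $C_{i,j}$.} Each 1D factor preserves averages over $I_i$ (resp.\ $J_j$) for every fixed transverse variable, so integrating in $x$ and then in $y$ gives $\int_{C_{i,j}}Q_C^\ast u=\int_{C_{i,j}}u$.
\item \textbf{Each correction $X_C^l u$, $\Gamma_C^l u$ ($l\ge1$) has zero cell average.} I expect \Cref{def:correctfun2D} to place $\bm X^l u$ in $V_x^0\times W_x^0$ and $\bm\Gamma^l u$ in $V_y^0\times W_y^0$. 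Then $\int_{I_i}X_C^l u\,dx=0$ for every $y$, and integrating over $J_j$ gives zero. The $\Gamma$ case is the same with the roles of $x$ and $y$ swapped.
\end{itemize}
Together these give $\int_{C_{i,j}}(u-u_h^C)=-\int_{C_{i,j}}\hat\zeta_C$.

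Next, Cauchy--Schwarz on each cell gives $|C_{i,j}|^{-1}\bigl|\int_{C_{i,j}}\hat\zeta_C\bigr|\le |C_{i,j}|^{-1/2}\|\hat\zeta_C\|_{C_{i,j}}$. Summing with weight $h_xh_y=|C_{i,j}|$ on the uniform mesh yields $e_{\mathrm{avg}}\le\|\hat\zeta_C(\cdot,T)\|\le\|\hat{\bm\zeta}(\cdot,T)\|$.

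It remains to check the hypothesis $\|\hat{\bm\zeta}(\cdot,0)\|\lesssim h^{\sigma_k}$ of \Cref{thm:supercon2D} for the prescribed initialization. With $\bm u_h(0)=\bm Q^\ast u_0-\bm X^1u_0-\bm\Gamma^1u_0$ we get $\bm\zeta(0)=-\bm X^1u_0-\bm\Gamma^1u_0$. Hence $\hat{\bm\zeta}(0)=0$ when $k=1$, and $\hat{\bm\zeta}(0)=\bm X^2u_0+\bm\Gamma^2u_0$ when $k\ge2$. For the latter I will invoke the 2D analogue of \Cref{prop:correctestimate}, namely $\|\bm X^l u\|+\|\bm\Gamma^l u\|\lesssim h^{k+1+l}\|u\|_{H^{k+1+l}}$. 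This gives the bound $h^{k+3}$, and $h^{k+3}\le h^{\sigma_k}$ for $k\ge2$. Then \Cref{thm:supercon2D} gives $\|\hat{\bm\zeta}(\cdot,T)\|\lesssim h^{\sigma_k}$, which concludes the argument.

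The main obstacle is the zero-cell-average property of the 2D corrections. If \Cref{def:correctfun2D} defines $\bm X^l$ only through variational conditions against $V_x^0$ test functions, I would first confirm that it also imposes the normalization $(X_C^l u,1)_{I_i}=0$ fiberwise in $y$, as \Cref{lem:projectlem} does in 1D. I would also confirm that the Gauss--Lobatto-based $\bm X^0,\bm\Gamma^0$ pieces do not enter $\hat{\bm\zeta}$, which they should not, since only $l\ge1$ appears there.
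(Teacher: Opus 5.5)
Your proposal is correct and follows the paper's argument: $Q_C^\ast$ preserves primal-cell averages, $X_C^l u$ and $\Gamma_C^l u$ have vanishing cell averages, then Cauchy--Schwarz cell by cell and \Cref{thm:supercon2D}. Your explicit check that the prescribed initialization gives $\hat{\bm\zeta}(0)=\bm X^2u_0+\bm\Gamma^2u_0=\mathcal O(h^{k+3})$ for $k\ge2$ (and $\hat{\bm\zeta}(0)=0$ for $k=1$), via \Cref{prop:correctestimate2D}, is a step the paper leaves implicit. The zero-average property you flagged does hold: \Cref{def:correctfun2D} builds the normalization into $\bm\chi^l u\in\hat{\mathbb P}_x^k(I_i)\times\hat{\mathbb P}_x^k(I_{i+1/2})$, and $\bm G^y$ acts only in $y$, so it commutes with $\int_{I_i}\cdot\,dx$ and $\bm X^l u=\bm G^y\bm\chi^l u$ keeps zero $x$-means (likewise for $\bm\Gamma^l u$).
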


\begin{proof}
The proof is analogous to \Cref{coro:supercon1D}. The tensor-product projection \(Q_C^\ast\) preserves primal-cell averages, and the directional corrections \(X_C^l u\) and \(\Gamma_C^l u\) have zero average in their construction direction; hence their cell averages over \(C_{i,j}\) vanish. Thus the cell-average error equals the cell average of the corrected error up to sign. Applying Cauchy's inequality cell by cell and then \Cref{thm:supercon2D} completes the proof.
\end{proof}

\subsection{Tensor-product projections and directional splitting}
\begin{definition}\label[definition]{def:projection2D}
	In 2D, we define the projection operators \( Q_C^* \) and \( Q_D^* \) as tensor products of 1D projections:
	\begin{align*}
		Q_C^* := P_{C}^x P_{C}^y, \qquad
		Q_D^* := P_{D}^x P_{D}^y.
	\end{align*}
	Here \( P_{C}^x, P_{D}^x \) act in the \( x \)-direction and \( P_C^y, P_D^y \) act in the \( y \)-direction on the primal/dual meshes (see Definition~\ref{def:projection}). 
	
	For a 2D function \(w_C\), and for each fixed \(y\), define the
	\(x\)-slice \(w_C^y(x):=w_C(x,y)\). We write
	\(
	\widehat P_i^x(w_C,\psi_i^x)
	:=
	\widehat P_i\bigl(w_C^y,\psi_i^x(\cdot,y)\bigr),
	\)
	where the operator on the right is the 1D form in
	\Cref{def:projection}. Similarly, for each fixed \(x\),
	\(
	\widehat P_j^y(w_C,\psi^y)
	:=
	\widehat P_j\bigl(w_C^x,\psi^y(x,\cdot)\bigr).
	\)
\end{definition}

This projection preserves \( \mathbb{Q}^k \) polynomials and satisfies the standard approximation property.

\begin{lemma}[Standard approximation]\label[lemma]{lem:standard_app2D}
	For the projection \( \bm{Q}^* \in \mathcal{L}(H^{k+1}(\Omega), L^2(\Omega)\times L^2(\Omega)) \), we have
	\begin{align*}
		\| (\bm{Q}^* - \bm{I})\omega \| + h \| (\bm{Q}^* - \bm{I})\omega \|_{\infty} + h^{1/2} \| (\bm{Q}^* - \bm{I})\omega \|_{\Gamma} \lesssim h^{k+1} \| \omega \|_{H^{k+1}}.
	\end{align*}
\end{lemma}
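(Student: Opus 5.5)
The estimate will follow from the directional splitting in \eqref{equ:splitting} combined with the 1D approximation property of \Cref{lem:standard_app}, applied slice by slice. I would first record the identity
\[
\bm I-\bm Q^\ast=(\bm I-\bm P^x)+(\bm I-\bm P^y)-(\bm I-\bm P^x)(\bm I-\bm P^y),
\]
so that $(\bm Q^\ast-\bm I)\omega=-\bm\chi\omega-\bm\gamma\omega+\tilde{\bm\eta}$. Each piece is then bounded separately in the three norms $\|\cdot\|$, $\|\cdot\|_\infty$ and $\|\cdot\|_\Gamma$. Well-definedness and the reproduction of $\mathbb{Q}^k$ are immediate, because each 1D factor is well-defined and is the identity on $\mathbb{P}^k$ in its own variable (\Cref{lem:standard_app}). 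The two factors also commute, since they act on different variables.

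\textbf{Directional terms.} For $\bm\chi\omega=(\bm I-\bm P^x)\omega$, fix $y$ and apply the 1D $L^2$ bound of \Cref{lem:standard_app} to the slice $\omega(\cdot,y)$ on each primal and dual cell. Squaring, summing over cells and integrating in $y$ gives
\[
\|\bm\chi\omega\|\lesssim h^{k+1}\|\partial_x^{k+1}\omega\|\le h^{k+1}\|\omega\|_{H^{k+1}}.
\]
The local form of the 1D estimate on one cell follows from the Bramble--Hilbert lemma on the reference cell, using the $L^\infty$-boundedness of \Cref{lem:linfproj}. The term $\bm\gamma\omega$ is handled symmetrically.

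\textbf{Mixed term.} For $\tilde{\bm\eta}$, apply the $x$-estimate to the function $(\bm I-\bm P^y)\omega$. Since $\bm P^y$ commutes with $\partial_x$, this gives
\[
\|\tilde{\bm\eta}\|\lesssim h^{k+1}\|(\bm I-\bm P^y)\partial_x^{k+1}\omega\|.
\]
It then suffices to use only the $L^2$-stability of $\bm I-\bm P^y$, which follows from the zero-order version of the Bramble--Hilbert argument. This yields $h^{k+1}\|\omega\|_{H^{k+1}}$ without requiring mixed regularity beyond $H^{k+1}$, because only $\partial_x^{k+1}\omega$ appears.

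\textbf{Remaining norms.} The $\|\cdot\|_\infty$ and $\|\cdot\|_\Gamma$ bounds I would derive locally rather than globally. On each cell $K$, write $(\bm Q^\ast-\bm I)\omega=(\bm Q^\ast-\bm I)(\omega-p)$, where $p\in\mathbb{Q}^k$ is the averaged Taylor polynomial of $\omega$. Then combine three ingredients:
\begin{itemize}
\item the $L^\infty$-stability of $\bm Q^\ast$ on $\mathbb{Q}^k$ (a tensor product of \Cref{lem:linfproj});
\item inverse and trace inequalities on $K$;
\item the local $L^2$ bound from the directional argument above.
\end{itemize}
A standard trace inequality, $\|v\|_{\partial K}^2\lesssim h^{-1}\|v\|_K^2+h\|\nabla v\|_K^2$, gives the boundary term, and summation over cells yields the $h^{1/2}\|\cdot\|_\Gamma$ contribution. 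For the $L^\infty$ part, the paper's scaling $h\|\cdot\|_\infty\lesssim h^{k+1}\|\omega\|_{H^{k+1}}$ is weaker than the sharp local bound. It follows from the local $L^2$ bound, the inverse estimate $\|\cdot\|_{L^\infty(K)}\lesssim h^{-1}\|\cdot\|_K$ on the polynomial part, and the Sobolev embedding in 2D for $\omega-p$.

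\textbf{Main obstacle.} The delicate point is the mixed term in the trace and $L^\infty$ norms, since $\omega\in H^{k+1}$ need not be continuous for $k=0$. Here $k\ge1$, so $H^{k+1}\subset C^0$ in 2D and the point evaluations used in $\widehat P_i$ are meaningful. I would first try to handle all three norms uniformly through the local Bramble--Hilbert route on $\mathbb{Q}^k$-invariance. I would fall back on the slice-wise argument only for the $L^2$ part.
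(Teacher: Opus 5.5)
\textbf{The gap: the mixed term.} You bound $\|\tilde{\bm\eta}\|\lesssim h^{k+1}\|(\bm I-\bm P^y)\partial_x^{k+1}\omega\|$ and then invoke $L^2$-stability of $\bm I-\bm P^y$. The LSZ projection is not $L^2$-bounded. The form $\widehat P_i$ contains the cell-center value through $-\omega(x_i)\bigl(\phi(x_{i+1/2}^-)-\phi(x_{i-1/2}^+)\bigr)$, which is nonzero for linear $\phi$. Hence $\bm P^y$ depends on $\omega(x,y_j)$: a narrow bump at $y_j$ with tiny $L^2$ norm has an image of fixed size. This is why \Cref{lem:linfproj} is only an $L^\infty$ statement.

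Consequently, for $\omega\in H^{k+1}$ only, $\partial_x^{k+1}(\bm P^y\omega)$ contains $\partial_x^{k+1}[\omega(\cdot,y_j)]$. The trace $\omega(\cdot,y_j)$ lies only in $H^{k+1/2}$, so this is an $H^{-1/2}$ distribution rather than an $L^2$ function. The slicewise $x$-estimate therefore cannot be applied to $(\bm I-\bm P^y)\omega$.

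The zero-order Bramble--Hilbert argument you have in mind actually gives
\[
\|(I-P^y)g\|_{L^2(J_j)}\lesssim h^{1/2}\inf_c\|g-c\|_{L^\infty(J_j)}\lesssim h\|\partial_y g\|_{L^2(J_j)} .
\]
For $g=\partial_x^{k+1}\omega$ this requires the mixed derivative $\partial_x^{k+1}\partial_y\omega$, of order $k+2$. So the directional splitting controls $\tilde{\bm\eta}$ only with extra regularity, which is exactly how the paper uses it in \Cref{lem:superconpro2D}, under $u\in H^{k+2}$. Your ``Remaining norms'' paragraph inherits the gap, because it feeds in ``the local $L^2$ bound from the directional argument.'' The continuity of $\omega$ noted under ``Main obstacle'' does not help: it makes $\omega(\cdot,y_j)$ meaningful, not $\partial_x^{k+1}\omega(\cdot,y_j)$.

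\textbf{The repair.} The paper states this lemma without proof, as the standard consequence of $\mathbb Q^k$-reproduction. The repair is the local Bramble--Hilbert route you list last, carried out without any splitting. Three facts are needed:
\begin{itemize}
\item $\bm Q^\ast$ is cell-local: the shifts $x\pm h/2$ in $\widehat P_i$ stay inside $I_i$, and likewise on dual cells.
\item $\bm Q^\ast$ reproduces $\mathbb Q^k\supset\mathbb P^k$.
\item $\bm Q^\ast$ is $L^\infty$-stable on all continuous functions on a cell. Apply \Cref{lem:linfproj} in $y$ for each fixed $x$, then in $x$ for each fixed $y$; the bound extends from $W^{1,\infty}$ to $C^0$ because $\widehat P_i$ involves only integrals and one point value.
\end{itemize}
Stability ``on $\mathbb Q^k$,'' as you phrase it, is vacuous, since $\bm Q^\ast$ is the identity there. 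Let $p\in\mathbb P^k$ be the averaged Taylor polynomial of $\omega$ on $K$. Since $k+1>d/2=1$,
\[
\|(\bm Q^\ast-\bm I)\omega\|_{L^\infty(K)}=\|(\bm Q^\ast-\bm I)(\omega-p)\|_{L^\infty(K)}\lesssim\|\omega-p\|_{L^\infty(K)}\lesssim h^{k}|\omega|_{H^{k+1}(K)}.
\]
Multiply by $|K|^{1/2}\approx h$ for the $L^2$ term and by $|\partial K|^{1/2}\approx h^{1/2}$ for the trace term, then sum squares over primal and dual cells. This yields all three terms of the lemma at once. No trace inequality is needed; yours would also require a bound on $\|\nabla(\bm Q^\ast-\bm I)\omega\|_K$ that you never supply. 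No inverse estimate or directional $L^2$ bound is needed either.

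A minor point: in 2D the claimed scaling $\|\cdot\|_\infty\lesssim h^{k}$ is exactly the sharp local order $h^{k+1-d/2}$, not a weaker one.
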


In addition to standard approximation properties, the projection \( \bm{Q}^* \) also exhibits a superconvergent estimate under the spatial operator analogous to the 1D case (see \cref{lem:supercon_project}).

\begin{lemma}[Superconvergence under spatial operator]\label[lemma]{lem:superconpro2D}
	Assume \(u\in H^{k+2}(\Omega)\). Then
	\begin{align*}
		\biggl| \sum_{i,j} \tilde{B}_{i+1/2, j+1/2}(\bm{\eta}, \phi_2) \biggr| \lesssim h^{k+1} \| u \|_{H^{k+2}} \| \phi_2 \|, \quad
		\biggl| \sum_{i,j} \hat{B}_{i,j}(\bm{\eta}, \phi_1) \biggr| \lesssim h^{k+1}  \| u \|_{H^{k+2}}\| \phi_1 \|.
	\end{align*}
\end{lemma}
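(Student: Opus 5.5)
The plan is to reduce \Cref{lem:superconpro2D} to the 1D estimate \Cref{lem:supercon_project} through the directional splitting \eqref{equ:splitting}, $\bm\eta=\bm\chi u+\bm\gamma u-\tilde{\bm\eta}$. Since $\hat B_{i,j}$ and $\tilde B_{i+1/2,j+1/2}$ are bilinear, I would estimate the three residuals $\sum_{i,j}\hat B_{i,j}(\bm\chi u,\phi_1)$, $\sum_{i,j}\hat B_{i,j}(\bm\gamma u,\phi_1)$ and $\sum_{i,j}\hat B_{i,j}(\tilde{\bm\eta},\phi_1)$ separately. The dual-mesh estimate follows by interchanging the roles of $C$ and $D$, so I only describe the primal one.

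\textbf{The mixed term.} From \eqref{eq:2D_spatial}, the operator $\hat B_{i,j}$ consists of
\begin{itemize}
\item a coupling integral scaled by $\tau_{\max}^{-1}\sim h^{-1}$,
\item a volume integral against $\nabla\phi_1$, which costs $h^{-1}$ by the inverse inequality,
\item edge terms, which cost $h^{-1/2}\cdot h^{-1/2}$ after trace and inverse inequalities.
\end{itemize}
Hence, as in \Cref{lem:weaklybdd} but for non-polynomial arguments, $|\sum\hat B(\bm w,\phi_1)|\lesssim h^{-1}(\|\bm w\|+h^{1/2}\|\bm w\|_\Gamma)\|\phi_1\|$. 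For $\tilde{\bm\eta}=(\bm I-\bm P^x)(\bm I-\bm P^y)u$, the tensor-product approximation properties give $\|\tilde{\bm\eta}\|+h^{1/2}\|\tilde{\bm\eta}\|_\Gamma\lesssim h^{k+2}\|u\|_{H^{k+2}}$. This uses only first-order accuracy in one direction and full accuracy in the other, so $H^{k+2}$ regularity suffices. The mixed term therefore contributes $O(h^{k+1})$.

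\textbf{The directional term $\bm\chi u$.} Here I would split $\hat B_{i,j}$ into its $x$-part (the $\beta_1$ edge terms together with $\partial_x\phi_1$) and its $y$-part. The coupling integral is divided by the same weights, e.g. via $\tau_{\max}^{-1}=\tfrac{1}{C_{\rm CFL}}(h_x^{-1}+h_y^{-1})$. For fixed $y$, the $x$-part acting on $\chi u$ is exactly the 1D operator applied to the slice $u(\cdot,y)-P^x u(\cdot,y)$. Two differences from the 1D setting must be handled.
\begin{itemize}
\item The dual component $\chi_D$ on $C_{i,j}$ lives on dual cells shifted in both $x$ and $y$.
\item The test function depends on $y$.
\end{itemize}
Because $P^x$ commutes with functions of $y$, and the $x$-shift structure of \Cref{def:projection} is unaffected by $y$, the 1D identity $\widehat P_i(\chi;\phi)=0$ holds slicewise for $\phi(\cdot,y)\in\mathbb P^k$. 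The $x$-residual then reduces to the 1D one, and I would apply \Cref{lem:supercon_project} slicewise and integrate in $y$ with Cauchy--Schwarz to get $h^{k+1}\|u\|_{H^{k+2}}\|\phi_1\|$.

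The remaining $y$-part applied to $\chi u$ involves $y$-edge traces and $\partial_y\phi_1$. This part carries no cancellation, so it is bounded crudely. The $y$-direction acts as a DG-type flux on $\chi u$, and since $\chi u$ is smooth in $y$, $y$-traces of $\chi u$ equal interior values. After integrating by parts in $y$, this part becomes $-\int_{C_{i,j}}\partial_y(\chi_D)\phi_1$ plus jumps of $\chi_D$ across $y=y_j$. These are small differences of $\chi_D$ at the shifted $y$-locations. Using $\partial_y\chi u=\chi(\partial_y u)=O(h^{k+1})$, this part is bounded by $h^{k+1}\|u\|_{H^{k+2}}\|\phi_1\|$.

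The term $\bm\gamma u$ is symmetric, with the roles of $x$ and $y$ exchanged.

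\textbf{Main obstacle.} The delicate step is the coupling term $\tau_{\max}^{-1}\int(\chi_D-\chi_C)\phi_1$. Individually it is only $O(h^{k})$, so it must be absorbed into the 1D-type cancellation. I would handle it by writing $\chi_D(x,y)=\chi_D(x,y)-\chi_D(x,y\pm h/2)+\chi_D(x,y\pm h/2)$.
\begin{itemize}
\item The $y$-difference is $O(h\cdot h^{k+1})$, which cancels the $h^{-1}$ prefactor.
\item The $y$-aligned remainder is precisely the 1D coupling in $x$, which enters the slicewise application of \Cref{lem:supercon_project}.
\end{itemize}
If the weight $\tau_{\max}$ versus $h_x$ mismatch causes trouble, I would instead perform the slicewise 1D cancellation with the full $\tau_{\max}$ and recheck \Cref{lem:supercon_project}. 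That lemma's proof depends only on $\tau_{\max}=O(h)$ and on the projection being defined with the same $\tau_{\max}$.
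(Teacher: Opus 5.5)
Your overall route is the paper's. You split $\bm\eta=\bm\chi u+\bm\gamma u-\tilde{\bm\eta}$ and bound the mixed term crudely by $h^{-1}\|\tilde{\bm\eta}\|\,\|\phi_1\|\lesssim h^{k+1}\|u\|_{H^{k+2}}\|\phi_1\|$. Your added trace term is a fair tightening, since \Cref{lem:weaklybdd} is stated only for discrete arguments. For $\bm\chi u$ the paper uses
\[
\hat B_{i,j}(\bm\chi u,\phi_1)=\int_{J_j}\hat B_i^x(\bm\chi u,\phi_1)\,dy-\int_{C_{i,j}}\partial_y\chi_D\,\phi_1\,dx\,dy .
\]
It then applies \Cref{lem:supercon_project} on each $x$-slice, and uses $\partial_y\bm\chi u=\bm\chi(\partial_y u)$ for the last term.

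However, the ``main obstacle'' you describe rests on a misconception, and the primary version of your $x$/$y$ split would not work.

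\textbf{Misconception about $\chi_D$.} $\chi_D=(I-P_D^x)u$ uses only the $x$-partition of the dual mesh. It is therefore as smooth in $y$ as $u$, so the ``jumps across $y=y_j$'' vanish. On each slice, $\chi_D(\cdot,y)$ is exactly the 1D dual projection error of $u(\cdot,y)$. Hence the coupling $\tau_{\max}^{-1}\int_{C_{i,j}}(\chi_D-\chi_C)\phi_1$ is already, slice by slice, the 1D coupling, and no $y$-shift is needed. Your shifted decomposition actually works against you. The remainder $\tau_{\max}^{-1}\int(\chi_D(x,y\pm h/2)-\chi_C(x,y))\phi_1(x,y)$ pairs different slices, so it is not the 1D coupling of any slice.

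\textbf{The weight split fails.} Distributing $\tau_{\max}^{-1}=C_{\rm CFL}^{-1}(h_x^{-1}+h_y^{-1})$ between an $x$-part and a $y$-part does not close. On $\bm\chi u$ the $y$-part has nothing to cancel against: its flux and volume terms collapse to $-\int\partial_y\chi_D\,\phi_1$. So any coupling share assigned to it is only $O(h^k)\|\phi_1\|$, which you yourself note for the coupling term. Moreover, the slicewise cancellation $\widehat P_i(\chi_C;\phi)=0$ only transfers to $\hat B_i^x$ if the coupling weight there equals the one used to define $P^x$.

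\textbf{Your fallback is the correct proof.} Put all of the coupling, with the full $\tau_{\max}$, inside the $x$-slice operator, and define $P^x$ with that same $\tau_{\max}$. This is exactly what the paper does; it leaves the matching of weights implicit. With this choice the argument closes at order $h^{k+1}$, and $\bm\gamma u$ follows symmetrically.
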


\begin{proof}
	In 2D, we define the projection operators \(\bm{Q}^*\) as the tensor product of 1D projections $\bm{P}^x\bm{P}^y$. We decompose the spatial operator term as
	\begin{align}\label{equ:errorsplit}
		\hat{B}_{i,j}(\bm{\eta}, \phi_1) = \hat{B}_{i,j}(\bm{\chi}u, \phi_1) + \hat{B}_{i,j}(\bm{\gamma}u, \phi_1) - \hat{B}_{i,j}(\tilde{\bm{\eta}}, \phi_1).
	\end{align}
	We estimate the three terms in \eqref{equ:errorsplit} separately. Note that \( \bm{\gamma}u \) and \( \bm{\chi}u \) are continuous in the \( x \)- and \( y \)-directions, respectively.
	For any \( \bm{w}^y \) continuous in \( x \) and \( \bm{w}^x \) continuous in \( y \), integration by parts gives
	\begin{align}
		\hat{B}_{i,j}(\bm{w}^y, \phi_1) &= \int_{I_i} \hat{B}_j^y(\bm{w}^y, \phi_1) \,\mathrm{d}x - \int_{C_{i,j}} \partial_x w^y_{D} \phi_1 \,\mathrm{d}x\mathrm{d}y, \label{equ:Bwy} \\
		\hat{B}_{i,j}(\bm{w}^x, \phi_1) &= \int_{J_j} \hat{B}_i^x(\bm{w}^x, \phi_1) \,\mathrm{d}y - \int_{C_{i,j}} \partial_y w^x_{D} \phi_1 \,\mathrm{d}x\mathrm{d}y, \label{equ:Bwx}
	\end{align}
	where
	\begin{align*}
		\hat{B}_j^y(\bm{w}^y, \phi_1) &= \frac{1}{\tau_{\max}} \int_{J_j} (w_{D}^y - w_{C}^y) \phi_1 \,\mathrm{d}y \\
		&\quad + \int_{J_j} w_{D}^y \partial_y \phi_1 \,\mathrm{d}y - (w_{D}^y)_{j+1/2} (\phi_1)_{j+1/2}^- + (w_{D}^y)_{j-1/2} (\phi_1)_{j-1/2}^+, \\[1ex]
		\hat{B}_i^x(\bm{w}^x, \phi_1) &= \frac{1}{\tau_{\max}} \int_{I_i} (w_{D}^x - w_{C}^x) \phi_1 \,\mathrm{d}x \\
		&\quad + \int_{I_i} w_{D}^x \partial_x \phi_1 \,\mathrm{d}x - (w_{D}^x)_{i+1/2} (\phi_1)_{i+1/2}^- + (w_{D}^x)_{i-1/2} (\phi_1)_{i-1/2}^+.
	\end{align*}
	Applying the 1D superconvergent estimate under the spatial operator from \cref{lem:supercon_project} and the Cauchy--Schwarz inequality, we obtain
	\(
	\left| \sum_{i,j} \int_{I_i} \hat{B}_j^y(\bm{\gamma}u, \phi_1) \,\mathrm{d}x \right| \lesssim h^{k+1} \| u \|_{H^{k+2}} \| \phi_1 \|.
	\)
	Since \( \partial_x \bm{\gamma}u = \bm{\gamma}(\partial_x u) \) and \( \partial_y \bm{\chi}u = \bm{\chi}(\partial_y u) \), the remaining integrals can be estimated using standard projection approximation properties. Hence,
	\begin{align*}
		\biggl| \sum_{i,j} \hat{B}_{i,j}(\bm{\gamma}u, \phi_1) \biggr| &\lesssim h^{k+1} \| u \|_{H^{k+2}} \| \phi_1 \|,\quad
		\biggl| \sum_{i,j} \hat{B}_{i,j}(\bm{\chi}u, \phi_1) \biggr| \lesssim h^{k+1} \| u \|_{H^{k+2}} \| \phi_1 \|.
	\end{align*}
	For the last term in \eqref{equ:errorsplit}, we invoke the weak boundedness property from \cref{lem:weaklybdd}:
	\begin{equation}\label{equ:tensorerror}
		\biggl| \sum_{i,j} \hat{B}_{i,j}(\tilde{\bm{\eta}}, \phi_1) \biggr| \lesssim h^{-1} \| (\bm{I} -  \bm{P}^x)[(\bm{I} - \bm{P}^y) (u)] \| \| \phi_1 \| \lesssim h^{k+1} \| u \|_{H^{k+2}} \| \phi_1 \|.
	\end{equation}
	If $u$ is sufficiently smooth, we can improve the estimate in \eqref{equ:tensorerror} to $\mathcal{O}(h^{2k+1})$.
\end{proof}

\begin{remark}
	After splitting the projection error into directional components (in \( x \) and \( y \)), the subsequent analysis is not a simple lift of the 1D case. When inserted into the 2D spatial operator, additional terms on overlapping cells appear that distinguish the 2D analysis from 1D; for example,
	$
	\iint_{C_{i,j}} \partial_x w_{D,y} \phi_1 \,\mathrm{d}x\mathrm{d}y  
$ in \eqref{equ:Bwy}. 
	These terms must be accounted for when constructing the directional corrections in the next subsection.
\end{remark}

\subsection{Directional corrections and the HOCA mechanism}
As in 1D, correction functions are central to the superconvergent analysis. In 2D, the task is more complicated. First, the tensor-product projection lacks a variational formulation, making low-order error cancellation nontrivial. Moreover, basis-level corrections are more involved since \( \dim \mathbb{Q}^{k+1} - \dim \mathbb{Q}^{k} = 2k+3 \), i.e., \( \mathcal{O}(k) \) additional modes must be controlled, unlike the 1D case.

The error splitting in \eqref{equ:splitting} ({cf.~\cite{cao_shu_yang_zhang_2015}}) motivates constructing corrections by tensorizing the pre-correction operators  \( \bm{\chi}^l \) and \( \bm{\gamma}^l \) with the 1D Gauss--Lobatto projection in the \( x \)-direction (\( \bm{G}^x \)) or \( y \)-direction (\( \bm{G}^y \)).

\begin{definition}\label[definition]{def:correctfun2D}
	Using the 1D Gauss--Lobatto projection operator \( \bm{G} \) and the pre-corrections \( \bm{\chi}^lu \) and \( \bm{\gamma}^lu \), the corrections in $V_h\times W_h$ are defined as
	\[
	\bm{X}^{l}u := \bm{G}^{y}( \bm{\chi}^{l}(u)), \quad
	\bm{\Gamma}^{l}u := \bm{G}^{x} (\bm{\gamma}^{l}(u)),
	\]
	with \( \bm{\chi}^0u := \bm{\chi}u \) and \( \bm{\gamma}^0u := \bm{\gamma}u \). We set \(\bm{\chi}^{\,l}u=\bm{\gamma}^{\,l}u=0\) for \(l<0\).
	
	For $l>0$, the \( x \)-directional pre-correction \( \bm{\chi}^{l}u \in \hat{\mathbb{P}}_x^k(I_i) \times \hat{\mathbb{P}}_x^k(I_{i+1/2}) \) is defined recursively for any \( \bm{\psi} = (\psi_1, \psi_2) \in \hat{\mathbb{P}}_x^k(I_i) \times \hat{\mathbb{P}}_x^k(I_{i+1/2}) \) by:
	\begin{equation*}
		\resizebox{0.99\hsize}{!}{$
			\begin{aligned}
				\widehat{P}_i^{\,x}\bigl(-{\chi}_C^{l}u, \psi_1\bigr) &= \int_{I_i} (\bm{\chi}^{l-1}\partial_x u)\psi_1 \,\mathrm{d}x + \widehat{\mathcal{T}}_i^{\,x}\bigl(\psi_1; \bm{\chi}^{l-1}u\bigr) - \widehat{\mathcal{D}}_i^x(\bm{\chi}^{l-2}\partial_y u, \psi_1) \\
				&\quad + \int_{x_{i-1/2}}^{x_i} [\Delta_x^+( \chi_C^{l-1}\partial_y u)]\psi_1 \,\mathrm{d}x + \int_{x_i}^{x_{i+1/2}}[ \Delta_x^- (\chi_C^{l-1}\partial_y u)]\psi_1 \,\mathrm{d}x, \\
				\widetilde{P}_{i+1/2}^{\,x}\bigl(-{\chi}_D^{l}u, \psi_2\bigr) &= \int_{I_{i+1/2}}( \bm{\chi}^{l-1}\partial_x u)\psi_2 \,\mathrm{d}x + \widetilde{\mathcal{T}}_{i+1/2}^{\,x}\bigl(\psi_2; \bm{\chi}^{l-1}u\bigr) - \widetilde{\mathcal{D}}_{i+1/2}^x(\bm{\chi}^{l-2}\partial_y u, \psi_2) \\
				&\quad +\int_{x_i}^{x_{i+1/2}} [\Delta_x^+ (\chi_D^{l-1}\partial_y u)]\psi_2 \,\mathrm{d}x + \int_{x_{i+1/2}}^{x_{i+1}} [\Delta_x^- (\chi_D^{l-1}\partial_y u)]\psi_2 \,\mathrm{d}x.
			\end{aligned}$}
	\end{equation*}
	Here, \( \widehat{\mathcal{T}}_i^x(\cdot, \cdot) = \widehat{\mathcal{T}}_i(\cdot, \cdot) \) defined in \eqref{equ:defT}, and
	\begin{equation*}	\resizebox{0.99\hsize}{!}{$
			\begin{aligned}
				\widehat{\mathcal{D}}_i^x(\bm{w}, \psi_1) &:= \int_{I_i} w_D \psi_1 \,\mathrm{d}x - \int_{x_{i-1/2}}^{x_i} w_C(x+h_x/2)\psi_1 \,\mathrm{d}x - \int_{x_i}^{x_{i+1/2}} w_C(x-h_x/2)\psi_1 \,\mathrm{d}x, \\
				\widetilde{\mathcal{D}}_{i+1/2}^x(\bm{w}, \psi_2) &:= \int_{I_{i+\frac12}} w_C \psi_2 \,\mathrm{d}x - \int_{x_i}^{x_{i+1/2}} w_D(x+h_x/2)\psi_2 \,\mathrm{d}x - \int_{x_{i+1/2}}^{x_{i+1}} w_D(x-h_x/2)\psi_2 \,\mathrm{d}x.
			\end{aligned}$}\end{equation*}
	For the \( y \)-direction, \( \bm{\gamma}^lu \) is defined analogously by switching \( x \) to \( y \), \( \bm{\chi} \) to \( \bm{\gamma} \), \( I_i \) to \( J_j \), and \( I_{i+1/2} \) to \( J_{j+1/2} \).
\end{definition}

Notice that \( \bm{\chi}^{l}u \) and \( \bm{\gamma}^{l}u \) are defined via the 1D correction operator; thus, we can obtain estimates for them---and consequently for \( \bm{X}^{l}u \) and \( \bm{\Gamma}^{l}u \)---via the boundedness of the Gauss--Lobatto projection.

\begin{lemma}\label[lemma]{prop:correctestimate2D}
	The pre-correction functions satisfy the following bounds:
	\begin{equation}\label{equ:est_r}
		\| \bm{\chi}^{l}u \| \lesssim h^{k+1+l} \| u \|_{H^{k+1+l}}, \quad
		\| \bm{\gamma}^{l}u \| \lesssim h^{k+1+l} \| u \|_{H^{k+1+l}}.
	\end{equation}
	Furthermore, by tensoring with the 1D Gauss--Lobatto projection, the correction functions satisfy:
	\begin{equation}\label{equ:est_R}
		\| \bm{X}^{l}u \| \lesssim h^{k+1+l} \| u \|_{H^{k+1+l}}, \quad
		\| \bm{\Gamma}^{l}u \| \lesssim h^{k+1+l} \| u \|_{H^{k+1+l}}.
	\end{equation}
\end{lemma}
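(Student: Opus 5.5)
The plan is to prove \eqref{equ:est_r} by induction on $l$, treating the transverse variable $y$ as a parameter so that each step reduces to the 1D machinery of \cref{lem:projectlem}, \cref{coro:overlap} and \cref{prop:correctestimate}. By the symmetry of \cref{def:correctfun2D}, it suffices to treat $\bm\chi^l u$; the argument for $\bm\gamma^l u$ is identical after interchanging $x$ and $y$.

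\textbf{Base case.} For $l=0$, $\bm\chi^0 u=(\bm I-\bm P^x)u$ is a 1D LSZ projection error in $x$ for each fixed $y$. Applying \cref{lem:standard_app} slicewise and integrating in $y$ gives $\|\bm\chi^0u\|\lesssim h^{k+1}\|u\|_{H^{k+1}}$.

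\textbf{Inductive step.} For $l\ge1$ and fixed $y$, the defining relation has the form $\widehat P_i^x(-\chi_C^l u,\psi_1)=(F_1,\psi_1)_i$ with $\psi_1\in\hat{\mathbb P}^k_x(I_i)$. Here $F_1$ is the $L^2(I_i)$-Riesz representative, within $\hat{\mathbb P}^k_x$, of the right-hand side. \cref{lem:projectlem} then gives $\|\chi_C^l u(\cdot,y)\|_{I_i}\lesssim h\,\|F_1\|_{I_i}$, combining $\|w\|_{L^\infty}\lesssim h^{1/2}\|F\|$ with $\|w\|_{I_i}\le h^{1/2}\|w\|_{L^\infty}$. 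So it suffices to bound each functional on the right by $h^{k+l}\|\psi_1\|_{I_i}$ up to the Sobolev norm of $u$, and then square-sum over $i$ and integrate over $y$. The five contributions are handled as follows.
\begin{itemize}
\item[(i)] $(\bm\chi^{l-1}\partial_xu,\psi_1)$ is bounded by $h^{k+l}\|u\|_{H^{k+l+1}}\|\psi_1\|$ using the induction hypothesis with $\partial_x u$.
\item[(ii)] $\widehat{\mathcal T}_i^x(\psi_1;\bm\chi^{l-1}u)$ is bounded by $h^{-1}\|\chi_D^{l-1}(\Delta_x^\pm u)\|\,\|\psi_1\|$ by \cref{coro:overlap}. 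The shifting property \cref{lem:shifting} holds slicewise for the 1D-type recursion, and \cref{coro:overlap} requires only that. Since $\|\Delta_x^\pm u\|_{H^m}\lesssim h\|u\|_{H^{m+1}}$, the induction hypothesis yields $h^{k+l}$.
\item[(iii)] $\widehat{\mathcal D}_i^x(\bm\chi^{l-2}\partial_yu,\psi_1)$ is trivially bounded by $\|\bm\chi^{l-2}\partial_yu\|\,\|\psi_1\|\lesssim h^{k+l-1}$. This is one order short, so a finer argument is needed (see below).
\item[(iv)] The shifted difference terms $\Delta_x^\pm(\chi_C^{l-1}\partial_yu)$ are also only $O(h^{k+l})$ in $L^2$ by the induction hypothesis, which is exactly the required order since the test space gives the extra factor $h$ via \cref{lem:projectlem}.
\end{itemize}

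\textbf{Main obstacle: term (iii).} I expect the $\widehat{\mathcal D}$-term to be the hard step. I would first rewrite it, using \cref{lem:shifting} slicewise, as $\int_{I_i}[\chi_D^{l-2}w-\chi_D^{l-2}w]\psi_1$ plus shifted differences. Concretely, $w_C(x\pm h/2)=\chi_D^{l-2}[w(\cdot\pm h/2)]$, so $\widehat{\mathcal D}_i^x$ collapses to $\int\chi_D^{l-2}(\Delta_x^\pm w)\psi_1$. This is $O(h)\cdot h^{k+l-1}=h^{k+l}$, which closes the induction. Since $\chi^{l-2}=0$ for $l=1$, this term appears only at $l=2$, where it is $\bm\chi^0\partial_y u$ and the shift identity is just the definition of the LSZ projection on the dual mesh.

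\textbf{Correction functions.} For \eqref{equ:est_R}, note that the Gauss--Lobatto projection $\bm G^y$ is $L^2$-stable on each slice up to an inverse estimate. More precisely, it is bounded in $L^\infty$ in $y$, and hence in $L^2$ when applied to functions whose $y$-regularity is inherited from $u$. Writing $\bm G^y\bm\chi^l u=\bm\chi^l(\bm G^y u)$ by the commutation of operators acting in different directions, and using \eqref{equ:est_r} with $u$ replaced by $G^yu$, which has $\|G^yu\|_{H^{k+1+l}}\lesssim\|u\|_{H^{k+1+l}}$ in the tensor sense, gives the bound for $\bm X^l u$, and similarly for $\bm\Gamma^l u$. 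If this commutation-based stability is delicate, the fallback is to use $\|\bm G^y v\|\lesssim\|v\|+h\|\partial_y v\|$ together with $\partial_y\bm\chi^lu=\bm\chi^l\partial_yu$.
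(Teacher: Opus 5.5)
Your proof of \eqref{equ:est_r} follows the paper's argument. The pieces are: slicewise use of \cref{lem:projectlem} to gain the factor $h$; \cref{coro:overlap} for the $\widehat{\mathcal T}^x_i$ term; the half-cell shift that turns $\widehat{\mathcal D}_i^x(\bm\chi^{l-2}\partial_y u,\psi_1)$ into a pairing with $\chi_D^{l-2}(\Delta_x^\pm\partial_y u)$; and the direct bound on $\Delta_x^\pm(\chi_C^{l-1}\partial_y u)$. These give exactly the four terms of the paper's recursive inequality, closed by the same induction.

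The step that fails is your primary route to \eqref{equ:est_R}. The identity $\bm G^y\bm\chi^l u=\bm\chi^l(\bm G^y u)$ does not hold for $l\ge1$, because $\bm\chi^l$ is not a purely $x$-directional operator. \cref{def:correctfun2D} feeds $\partial_y u$ into the recursion, through $\Delta_x^\pm(\chi_C^{l-1}\partial_y u)$ already at $l=1$ and through $\widehat{\mathcal D}_i^x$ at $l=2$, and $G^y\partial_y\neq\partial_y G^y$. In addition, $G^y u$ is only a continuous piecewise polynomial in $y$. So $\|G^y u\|_{H^{k+1+l}}\lesssim\|u\|_{H^{k+1+l}}$ is false, and \eqref{equ:est_r} cannot be applied to it.

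Your fallback is the correct argument. It is also what actually justifies the paper's one-line step $\|X_C^l u\|_{i,j}\lesssim\|\chi_C^l u\|_{i,j}$. The Gauss--Lobatto projection reproduces endpoint values, so it is $L^\infty$-bounded but not $L^2$-bounded. For functions smooth in $y$, the 1D trace inequality gives $\|G^y v\|_{J_j}\lesssim\|v\|_{J_j}+h\|\partial_y v\|_{J_j}$. Together with $\partial_y\bm\chi^l u=\bm\chi^l\partial_y u$, which holds because every operator in the recursion acts slicewise in $y$, this yields $\|\bm X^l u\|\lesssim\|\bm\chi^l u\|+h\|\bm\chi^l\partial_y u\|$.

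Watch the regularity index, though. Inserting \eqref{equ:est_r} for $\partial_y u$ costs one derivative and only gives $h^{k+1+l}\|u\|_{H^{k+2+l}}$. To keep the stated $\|u\|_{H^{k+1+l}}$, rerun your induction one order lower, starting from $\|(\bm I-\bm P^x)w\|\lesssim h^{k}\|w\|_{H^{k}}$. Every term in the recursion scales the same way, so this gives $\|\bm\chi^l w\|\lesssim h^{k+l}\|w\|_{H^{k+l}}$. Then apply it with $w=\partial_y u$.
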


\begin{proof}
	To avoid repetition, we only discuss the estimate in the \( x \)-direction on the primal mesh. From \cref{prop:correctestimate} together with \cref{coro:overlap}, we obtain
	\[
	\|\chi_C^{l}u\|_{i,j} \leq h\Bigg( \|\chi_C^{l-1}\partial_x u \|_{i,j} + \frac{1}{\tau_{\max}} \|\chi_C^{l-1}(\Delta_x^\pm u)\|_{i,j} +  \|\chi_D^{l-2} (\Delta_x^\pm\partial_yu)\|_{i,j}+\|\Delta_x^\pm(\chi_C^{l-1} \partial_y u)\|_{i,j}\Bigg).
	\]
	Since \( \bm{\chi}^{0}u = (\bm{I} - \bm{P}^x)u \) and \( \bm{\chi}^{-1}u := \bm{0} \), the standard approximation theorem implies that \eqref{equ:est_r} holds for \( l=0 \). We can then show inductively that
	\[
	\| \chi_C^{l}u \|_{i,j} \lesssim h^{k+1+l} \| u \|_{H^{k+1+l}(C_{i,j})}.
	\]
	Using the boundedness of the Gauss--Lobatto projection, we get
	\[
	\| \bm{X}_C^{l}u \|_{i,j} \lesssim \| \chi_C^{l}u \|_{i,j} \lesssim h^{k+1+l} \| u \|_{H^{k+1+l}(C_{i,j})}.
	\]
\end{proof}

Based on \cref{prop:correctestimate2D}, the following lemma establishes the superconvergent property of the correction functions under the spatial operator. For example, on the primal mesh \( C_{i,j} \), we have:

\begin{lemma}\label[lemma]{lem:interface2D}
	Assume \( u \in H^{\sigma_k+1}(\Omega) \). Then for any \( (\psi_i^x, \psi_j^y) \in V_x^0 \times V_y^0 \subset \hat{\mathbb{P}}_x^k(I_i) \times \hat{\mathbb{P}}_y^k(J_j) \) and \(1\le l\le \min\{k,2\}\), we have
	\begin{align*}
		\biggl| \sum_{i,j} \left[ \hat{B}_{i,j}(\bm{\chi}^{l}u, \psi_i^x) - \int_{J_j} \widehat{P}_i^x(\chi_C^{l}u, \psi_i^x) \,\mathrm{d}y \right] \biggr| &\lesssim h^{k+1+l} \| u \|_{H^{k+2+l}} \| \psi_i^x \|, \\
		\biggl| \sum_{i,j} \left[ \hat{B}_{i,j}(\bm{\gamma}^{l}u, \psi_j^y) - \int_{I_i} \widehat{P}_j^y(\gamma_C^{l}u, \psi_j^y) \,\mathrm{d}x \right] \biggr| &\lesssim h^{k+1+l} \| u \|_{H^{k+2+l}} \| \psi_j^y \|.
	\end{align*}
\end{lemma}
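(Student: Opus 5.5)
We prove the first estimate; the second follows by interchanging the roles of $x$ and $y$ (and of $\bm\chi^l$ and $\bm\gamma^l$). The idea is to reduce the 2D residual to slicewise 1D residuals, exactly as in the proof of \cref{lem:superconpro2D}. There we used that $\bm\chi u$ is continuous in $y$. For the pre-corrections this continuity is not automatic, so we first argue that $\chi_C^l u$ and $\chi_D^l u$ may be treated as functions that are smooth in $y$ on each primal/dual strip. Indeed, \cref{def:correctfun2D} defines $\bm\chi^l u$ slice by slice in $x$ through the 1D operators $\widehat P_i^x$, $\widetilde P_{i+1/2}^x$, with data depending smoothly on $y$ through $u$, $\partial_x u$ and $\partial_y u$. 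Hence $\bm\chi^l u(x,\cdot)$ is as regular in $y$ as $u$, and $\partial_y\bm\chi^l u=\bm\chi^l\partial_y u$ by linearity and commutation with $\partial_y$. With this in hand, the $y$-interface terms of $\hat B_{i,j}$ can be integrated by parts in $y$, giving the analogue of \eqref{equ:Bwx}:
\[
\hat B_{i,j}(\bm\chi^l u,\psi_i^x)=\int_{J_j}\hat B_i^x(\bm\chi^l u,\psi_i^x)\,\mathrm dy-\int_{C_{i,j}}\partial_y(\chi_D^l u)\,\psi_i^x\,\mathrm dx\,\mathrm dy .
\]
This uses $\beta_2=1$ and the fact that the dual function is continuous across $y_{j\pm1/2}$.

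Next we subtract $\int_{J_j}\widehat P_i^x(\chi_C^l u,\psi_i^x)\,\mathrm dy$ and recognize the slicewise difference as the 1D residual $\widehat{\mathcal T}_i^x(\psi_i^x;\bm\chi^l u)$ of \eqref{equ:defT}. By \cref{coro:overlap}, applied on each slice and integrated in $y$, this residual is bounded by $h^{-1}\|\chi_D^l(\Delta_x^\pm u)\|_{i,j}\|\psi_i^x\|_{i,j}$. Applying \cref{prop:correctestimate2D} to $\Delta_x^\pm u$ together with $\|\Delta_x^\pm u\|_{H^m}\lesssim h\|u\|_{H^{m+1}}$ gives a contribution of order $h^{k+1+l}\|u\|_{H^{k+2+l}}\|\psi_i^x\|_{i,j}$. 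The remaining transverse term $\int_{C_{i,j}}\chi_D^l(\partial_y u)\,\psi_i^x$ is bounded directly by Cauchy--Schwarz and \cref{prop:correctestimate2D}, giving $h^{k+1+l}\|u\|_{H^{k+2+l}}\|\psi_i^x\|$. Summing over $i,j$ and applying Cauchy--Schwarz over cells then yields the first estimate. The second follows symmetrically.

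The main obstacle is the first step: justifying that $\bm\chi^l u$, as opposed to its Gauss--Lobatto tensorization $\bm X^l u$, has enough $y$-regularity and commutes with $\partial_y$. The recursive definition involves $\bm\chi^{l-2}\partial_y u$ through $\widehat{\mathcal D}_i^x$ and the shifted differences, so this has to be checked level by level by induction on $l$. At each level we use the uniform invertibility of the 1D system (\cref{lem:projectlem}) slice by slice and differentiate the defining identities in $y$. If the traces at $y_{j\pm1/2}$ must instead be handled directly, we would bound them by trace and inverse inequalities. That route loses $h^{-1}$, but it is compensated by the $\Delta$-difference structure, since the primal and dual strips differ by a $h_y/2$ shift in $y$.
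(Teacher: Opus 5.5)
Your proposal is correct and follows the paper's proof. Using the $y$-continuity of $\bm\chi^l u$ to obtain the analogue of \eqref{equ:Bwx}, the difference reduces to $\int_{J_j}\widehat{\mathcal T}_i^x(\psi_i^x;\bm\chi^l u)\,\mathrm dy$ minus the transverse term $\int_{C_{i,j}}(\chi_D^l\partial_y u)\psi_i^x$, and both are then bounded by \cref{coro:overlap} and \cref{prop:correctestimate2D} exactly as you describe. Your added justification that $\bm\chi^l u$ is smooth in $y$ and commutes with $\partial_y$ just spells out what the paper asserts in one line, and the trace-based fallback route in your last paragraph is not needed.
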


\begin{proof}
	Since \( \bm{\chi}^lu \) is continuous in the \( y \)-direction, from \eqref{equ:Bwx}, \eqref{equ:Bwy}, and \cref{coro:overlap}, we have
	\[
	\hat{B}_{i,j}(\bm{\chi}^lu, \psi_i^x) - \int_{J_j} \widehat{P}_i^x(\chi_C^lu, \psi_i^x) \,\mathrm{d}y = \int_{J_j} \widehat{\mathcal{T}}_i^x(\psi_i^x; \bm{\chi}^lu) \,\mathrm{d}y - \int_{C_{i,j}} (\chi_D^l\partial_y u)\psi_i^x \,\mathrm{d}x\mathrm{d}y.
	\]
	Then \cref{coro:overlap} and \cref{prop:correctestimate2D} yield the desired estimate.
\end{proof}

\subsection{HOCA property}
In the 1D analysis, superconvergence follows from two
complementary mechanisms. The correction functions control the residual for
zero-mean test functions, while the asymptotic weak cancellation controls the
residual for constant test functions. In 2D, however, the weak
cancellation associated with an individual directional residual is not
sufficiently high order and would therefore degrade the superconvergence
rate. The optimal order is recovered only after the directional projection
residual is aggregated with the corresponding correction residuals.

We first state the resulting HOCA property. Its proof is organized around
three auxiliary lemmas. The first gives a local directional identity, which
decomposes the local spatial residual into the \(R_{i+\frac12}^l\)-terms
defined in \eqref{eq:def-R-local}, the
\(\mathcal I_{i+\frac12}^l\)-terms defined in \eqref{eq:def-I-local}, and the
remaining correction term. The second estimates the
\(\mathcal I_{i+\frac12}^l\)-terms in this local identity after aggregation
over the correction levels, while the third estimates the
\(R_{i+\frac12}^l\)-terms.
\begin{proposition}[HOCA property]\label[proposition]{prop:correct_testfuncest2D}
	For any $\bm{\psi} = (\psi_i^x,\psi_j^y)\in \overline{V}_x\times \overline{V}_y$, the following estimates hold
	\begin{align*}
		\sum_{i,j}	\biggl|  \sum_{l=0}^{\min\{k,2\}} \widehat{B}_{i,j}\bigl(\bm{X}^{l}u, \psi_i^x\bigr) \biggr| &\lesssim h^{\sigma_k} \|u\|_{H^{\sigma_k+1}(\Omega)}\,\|\psi_i^x\|,\\
		\sum_{i,j} \biggl|\sum_{l=0}^{\min\{k,2\}} \widehat{B}_{i,j}\bigl(\bm{\Gamma}^{l}u, \psi_j^y\bigr) \biggr| &\lesssim h^{\sigma_k} \|u\|_{H^{\sigma_k+1}(\Omega)}\,\|\psi_j^y\|.
	\end{align*}
	The same estimates hold on the dual mesh after replacing
	\(I_i,J_j,\widehat P,\widehat B\) by
	\(I_{i+\frac12},J_{j+\frac12},\widetilde P,\widetilde B\), respectively.
\end{proposition}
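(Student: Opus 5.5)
We argue only for the $x$-directional primal-mesh estimate; the $y$-direction and the dual mesh follow by symmetry. Fix $\psi_i^x\in\overline V_x$, which is constant in $x$ on $I_i$ and a polynomial in $y$. Since $\bm X^l u=\bm G^y(\bm\chi^l u)$ with $\bm G^y$ the Gauss--Lobatto projection, the traces of $\bm X^lu$ are continuous across horizontal edges. We can therefore use \eqref{equ:Bwx} to split each $\widehat B_{i,j}(\bm X^lu,\psi_i^x)$ into two parts. The first is a transverse integral $\int_{J_j}\hat B_i^x(\bm X^lu,\psi_i^x)\,\mathrm{d}y$, which is a 1D operator applied slice-wise with a constant-in-$x$ test function. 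The second is a cross term $-\int_{C_{i,j}}\partial_y X^l_D u\,\psi_i^x$.

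We then rewrite the 1D part, slice by slice, exactly as in the proof of \Cref{prop:correct_testfuncest}. Using the dual linear test functions $\tilde\phi_{i\pm\frac12}$, the identity \eqref{eq:B-interface} gives
\[
\hat B_i^x(\bm X^l u,1)=\tfrac12\bigl[\widetilde P^x_{i+1/2}(X_D^lu,\tilde\phi_{i+\frac12})-\widetilde P^x_{i-1/2}(X_D^lu,\tilde\phi_{i-\frac12})\bigr].
\]
We will call these the local $R^l_{i+\frac12}$-terms. The $\widetilde P^x_{i\pm1/2}$ values are then replaced using the recursion of \Cref{def:correctfun2D} for $\chi_D^{l+1}$, tested against $\tilde\phi$. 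This leaves residual pieces $\mathcal I^l_{i+\frac12}$, which collect the $\widetilde{\mathcal D}^x$ terms, the $\Delta_x^\pm(\chi^{l-1}\partial_y u)$ terms, and the commutator between $\bm G^y$ and the 1D operators. This is the \emph{local directional identity}. The point is that for a single $l$ these pieces are only of order $h^{k+1+l}$, and for $l=0$ only of order $h^{k+1}$, because the $y$-cross contributions are not cancelled.

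The key step is the aggregation over $l=0,\dots,\min\{k,2\}$. The $\widetilde{\mathcal D}^x(\bm\chi^{l-2}\partial_y u,\cdot)$ and $\Delta_x^\pm(\chi^{l-1}\partial_yu)$ terms in the definition of $\bm\chi^{l}$ were designed so that, after summation, they telescope against the cross terms $-\int\partial_y X_D^{l'}u\,\psi$ coming from level $l'=l-1$ or $l-2$. Shifting dual half-cells onto primal half-cells as in \Cref{prop:testfuncest} turns $\widetilde{\mathcal D}^x$ into a difference $\Delta_x^\pm$, which gains a factor $h$. After the telescoping, the only surviving pieces are of three kinds:
\begin{itemize}
\item Taylor-remainder expressions of the form $(I-P^x)\bigl(\partial_xv(\cdot\pm h/2)\pm\Delta_x^\pm v/(h/2)\bigr)$, with $v$ a second difference of $u$ or of $\partial_yu$, which are bounded by $h^{k+3}$ exactly as in the 1D theorem;
\item the top-level terms with $l=\min\{k,2\}$, bounded by $h^{k+1+\min\{k,2\}}\cdot h^{0}$ via \Cref{coro:overlap} and \Cref{prop:correctestimate2D};
\item Gauss--Lobatto commutator terms, in which $\partial_y\bm G^y-\bm G^y\partial_y$ acts on a function that is already $O(h^{k+1+l})$, and which gain extra powers of $h$ from the $y$-direction superconvergence of $\bm G^y$. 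For $k=1$ we additionally use $\|\bm\chi^0\|\lesssim h^2$ together with the $2k+1$ exactness.
\end{itemize}
Summing over $i,j$ and applying Cauchy--Schwarz yields the bound $h^{\sigma_k}\|u\|_{H^{\sigma_k+1}}\|\psi_i^x\|$.

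The main obstacle is this aggregation step: showing that the $\mathcal I^l$-terms from consecutive levels cancel to order $h^{\sigma_k}$ rather than merely $h^{k+1+l}$. In particular, the $l=0$ projection residual $\bm\chi u$ has no variational characterization in $y$, so its cross term $\int\partial_y\chi_D u\,\psi$ must be matched exactly by the $\Delta_x^\pm(\chi^0\partial_yu)$ and $\widetilde{\mathcal D}^x$ data in the definitions of $\bm\chi^1$ and $\bm\chi^2$. I would first verify this matching by explicit computation with the shift identity \Cref{lem:shifting}, written in $x$ with $y$ as a parameter, and only then bound the remainders.
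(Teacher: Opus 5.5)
Your plan is essentially the paper's proof. The paper also uses \eqref{equ:Bwx} and the interface identity \eqref{eq:B-interface} to rewrite the constant-test slice part through the directional recursion; this is the local identity of \Cref{lem:directional-local-identity}, and the recursion substituted there is the one defining \(\bm\chi^{l}\), not \(\bm\chi^{l+1}\). It then lets the \(\Delta_x^\pm(\chi_D^{l-1}\partial_y u)\) source terms collapse to \(\int_{I_i}\chi_D^{l-1}\partial_y u\,\mathrm{d}x\), so the transverse terms telescope to \(-\int_{C_{i,j}}\chi_D^{m}\partial_y u\,\psi_i^x\) with \(m=\min\{k,2\}\). The remaining constant-test blocks are reduced to the 1D weak cancellation and \Cref{prop:correct_testfuncest}.

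The only difference is how \(\bm G^y\) is handled. The paper first splits off \((\bm G^y-\bm I)\bm\chi^l u\) and bounds it crudely by weak boundedness, \(h^{-1}\|(\bm G^y-\bm I)\bm\chi^l u\|\lesssim h^{2k+1+l}\lesssim h^{\sigma_k}\). So neither your commutator analysis nor any \(y\)-superconvergence of \(\bm G^y\) is needed.
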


We use the following explicit identity for the constant-test terms.
\begin{lemma}[Local directional identity]
	\label[lemma]{lem:directional-local-identity}
	Let
	$
	v(x,y):=u(x,y)-u(x-h_x,y)
	$
	$
	\phi_{i+\frac12}^x:=\frac{2(x-x_{i+\frac12})}{h_x}
	$ on  $I_{i+\frac12}$. 
	For fixed \(y\), and for \(0\le l\le \min\{k,2\}\), define
	\begin{subequations}\label{eq:def-RI-local}
		\begin{align}
			R_{i+\frac12}^l(u;y)
			:={}&
			\int_{I_{i+\frac12}}
			\chi_D^{l-1}(\partial_xv)\phi_{i+\frac12}^x\,\mathrm dx
			+
			\widetilde{\mathcal T}_{i+\frac12}^{x}
			(\phi^x_{i+\frac12};\bm\chi^{l-1}v)
			-
			\widetilde{\mathcal D}_{i+\frac12}^{x}
			(\bm\chi^{l-2}\partial_yv;\phi^x_{i+\frac12}),
			\label{eq:def-R-local}
			\\
			\mathcal I_{i+\frac12}^l(u;y)
			:={}&
			\int_{I_{i+\frac12}}
			\chi_D^{l-1}\partial_xu\,\mathrm dx
			+
			\widetilde{\mathcal T}_{i+\frac12}^{x}
			(1;\bm\chi^{l-1}u)
			-
			\widetilde{\mathcal D}_{i+\frac12}^{x}
			(\bm\chi^{l-2}\partial_yu;1).
			\label{eq:def-I-local}
		\end{align}
	\end{subequations}
	Then, for any \(\psi_i^x\in\overline V_x\),
	\[
	\begin{aligned}
		\widehat B_{i,j}(\bm\chi^l u,\psi_i^x)
		=
		\int_{J_j}
		\left[
		\frac12R_{i+\frac12}^l(u;y)
		-
		\mathcal I_{i+\frac12}^l(u;y)
		+
		\int_{I_i}
		(\chi_D^{l-1}-\chi_D^l)\partial_yu\,\mathrm dx
		\right]\psi_i^x(y)\,\mathrm dy .
	\end{aligned}
	\]
\end{lemma}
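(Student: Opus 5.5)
The identity is exact and purely algebraic: it follows from the 1D computation in the proof of \Cref{prop:correct_testfuncest}, run slice by slice in \(y\), plus one extra treatment of the \(y\)-flux terms. Since \(\psi_i^x\in\overline V_x\), it is constant in \(x\) on \(I_i\) and depends only on \(y\). Also, \(\bm\chi^l u=\bm G^y$-independent pre-correction is continuous in \(y\) (for \(l=0\) it is \((\bm I-\bm P^x)u\); for \(l\ge1\) we work with the pre-corrections as in \Cref{lem:interface2D}).

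\textbf{Step 1: split off the \(y\)-terms.} I will apply the splitting \eqref{equ:Bwx} with \(\bm w^x=\bm\chi^l u\). This gives
\[
\widehat B_{i,j}(\bm\chi^l u,\psi_i^x)=\int_{J_j}\hat B_i^x(\bm\chi^l u,\psi_i^x)\,\mathrm dy-\int_{C_{i,j}}\partial_y\chi_D^l u\,\psi_i^x\,\mathrm dx\mathrm dy .
\]
Because \(\partial_y\) commutes with the \(x\)-directional construction, the last term equals \(-\int_{J_j}\int_{I_i}\chi_D^l\partial_y u\,\mathrm dx\,\psi_i^x\,\mathrm dy\).

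\textbf{Step 2: reduce the slice term to dual-cell tests.} Fix \(y\). With \(\phi_1\equiv1\), the slice operator \(\hat B_i^x(\bm\chi^l u,1)\) is exactly the 1D constant-test residual. I will repeat the computation leading to \eqref{eq:B-interface}: split \(I_i\) into its two half cells and shift them by \(\pm h_x/2\) using \Cref{lem:shifting}. This expresses the slice term as
\[
\tfrac12\bigl[\widetilde P^x_{i+1/2}(\chi_D^l u,\phi^x_{i+1/2})-\widetilde P^x_{i-1/2}(\chi_D^l u,\phi^x_{i-1/2})\bigr]
\]
plus \(\widetilde P^x\)-contributions against the constant \(1\).

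\textbf{Step 3: substitute the recursion.} For \(l\ge1\), I will replace each \(\widetilde P^x(\chi_D^l u,\cdot)\) using the dual recursion in \Cref{def:correctfun2D}, tested with \(\phi^x_{i\pm1/2}\in\hat{\mathbb P}^k_x\). This produces the \(\partial_x\), \(\widetilde{\mathcal T}^x\), \(\widetilde{\mathcal D}^x\) and \(\Delta_x^\pm(\chi^{l-1}\partial_y u)\) terms.

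The \(i-1/2\) contribution is the \(i+1/2\) contribution evaluated at \(u(\cdot-h_x,y)\), by translation invariance of the uniform mesh. Its difference with the \(i+1/2\) contribution is therefore the same expression applied to \(v=u-u(\cdot-h_x,\cdot)\). By linearity of every operator in \(u\), these terms collect into \(\tfrac12R^l_{i+1/2}(u;y)\).

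The parts tested against the constant \(1\) collect into \(-\mathcal I^l_{i+1/2}(u;y)\). For these I will use \Cref{rem:constanttest}: \(\widetilde P^x(\cdot,1)=0\), so the constant-test pieces reduce exactly to the right-hand side data in the recursion.

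\textbf{Step 4: the \(\Delta_x^\pm\) terms.} The recursion's terms \(\int\Delta_x^\pm(\chi^{l-1}\partial_y u)\) are shifted back by \(\pm h_x/2\). After the shift, their dual-mesh halves telescope into \(\int_{I_i}\chi_D^{l-1}\partial_y u\,\mathrm dx\). Together with Step 1, this yields the stated term \(\int_{I_i}(\chi_D^{l-1}-\chi_D^l)\partial_y u\,\mathrm dx\).

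For \(l=0\), the conventions \(\bm\chi^{-1}=\bm\chi^{-2}=0\) make \(R^0\) and \(\mathcal I^0\) vanish. The identity then reduces to the 1D weak cancellation of \Cref{prop:testfuncest} applied slice-wise, and I will check that case directly.

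The main obstacle is the bookkeeping in Steps 3–4. The shift by \(h_x/2\) exchanges the primal and dual components (\Cref{lem:shifting}), and this must be tracked consistently for the \(\widetilde{\mathcal D}^x\) and \(\Delta_x^\pm\partial_y\) terms. I expect to verify the resulting coefficients (\(\tfrac12\), \(-1\), \(+1\)) by a careful term-by-term comparison with \eqref{eq:P2Ptilde}.
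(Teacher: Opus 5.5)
\textbf{Gap: the claimed $-\mathcal I^l$ term and the coefficient $+\tfrac12$ on $R^l$ cannot be produced by your argument.} Your outline follows the paper's route step for step: split off the $y$-flux with \eqref{equ:Bwx}, reduce the slice constant-test residual to $\tfrac12[\widetilde P^x_{i+\frac12}(\chi_D^lu,\phi^x_{i+\frac12})-\widetilde P^x_{i-\frac12}(\chi_D^lu,\phi^x_{i-\frac12})]$, insert the dual recursion of \Cref{def:correctfun2D}, shift the $\Delta_x^\pm$ terms, and translate $I_{i-\frac12}$ onto $I_{i+\frac12}$. Your $l=0$ case is fine. The problem is Step~3, where $-\mathcal I^l_{i+\frac12}$ is supposed to appear.

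There are no constant-test pieces to collect. The ``$\widetilde P^x$-contributions against $1$'' in your Step~2 vanish identically (\Cref{rem:constanttest}). Since $\phi^x_{i-\frac12}(x-h_x)=\phi^x_{i+\frac12}(x)$ exactly, the translation creates no multiple of the constant test either. Nor can you reduce constant-test pieces to the recursion's right-hand side, because the recursion is imposed only for $\psi_2\in\hat{\mathbb P}^k_x(I_{i+\frac12})$. At $\psi_2=1$ its left side is $0$ and so is its $\Delta_x^\pm$ part, so extending it would force $\mathcal I^l_{i+\frac12}=0$. For $l=2$ that is false: \Cref{lem:constant-test-aggregation} itself identifies $\mathcal I^2_{i+\frac12}$ with a first-level residual that is only small, not zero.

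There is also a sign slip. Your Step~2 formula is correct (it is \eqref{eq:B-interface}), but the recursion prescribes $\widetilde P^x(-\chi_D^lu,\cdot)$. Substituting it therefore gives $-\tfrac12R^l_{i+\frac12}$, not $+\tfrac12R^l_{i+\frac12}$.

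Executed faithfully, your plan proves
\[
\widehat B_{i,j}(\bm\chi^l u,\psi_i^x)=\int_{J_j}\Bigl[-\tfrac12R^l_{i+\frac12}(u;y)+\int_{I_i}(\chi_D^{l-1}-\chi_D^l)\partial_yu\,\mathrm dx\Bigr]\psi_i^x(y)\,\mathrm dy .
\]
Your Step~4 ``telescoping'' silently needs the zero dual-cell mean of $\chi_D^{l-1}\partial_yu$. The half-cell shift gives $E^l_{i+\frac12}=\int_{x_{i+1/2}}^{x_{i+1}}\chi_D^{l-1}\partial_yu\,\mathrm dx-\int_{x_i}^{x_{i+1/2}}\chi_D^{l-1}\partial_yu\,\mathrm dx$. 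Only the zero mean turns $-\tfrac12(E^l_{i+\frac12}-E^l_{i-\frac12})$ into $\int_{I_i}\chi_D^{l-1}\partial_yu\,\mathrm dx$.

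This differs from the stated identity by $\int_{J_j}(R^l_{i+\frac12}-\mathcal I^l_{i+\frac12})\psi_i^x\,\mathrm dy$. For $l=1$, $\mathcal I^1_{i+\frac12}=0$ by \Cref{prop:testfuncest}. On the other hand, $R^1_{i+\frac12}(u;y)=-2\hat B_i\bigl(\bm R^1[u(\cdot,y)],1\bigr)$ is the 1D first-level constant-test residual, which \Cref{prop:correct_testfuncest} bounds but does not make zero. For $k=1$ one finds $\hat B_i(\bm R^1u,1)=\tfrac{h}{4\tau_{\max}}(c_{i-\frac12}-c_{i+\frac12})$, where $R^1_Du=c_{i\pm\frac12}\tilde\phi_{i\pm\frac12}$ on $I_{i\pm\frac12}$, and the slopes $c_{i\pm\frac12}$ vary with $i$ for generic $u$. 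So the deferred term-by-term check cannot confirm the coefficients $(\tfrac12,-1,+1)$, and no bookkeeping repairs this.

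The paper's proof has the same hole. Its reduction step writes $\widetilde P^x(-\chi_D^lu,\cdot)$, with the sign opposite to \eqref{eq:B-interface}, and it asserts the $-\mathcal I^l$ term after ``subtraction'' without derivation. What your argument actually establishes is the corrected identity above. That identity is still enough for \Cref{prop:correct_testfuncest2D}: there the $R^l$ are bounded only in absolute value (\Cref{lem:grouped-RI-estimate}), and the transverse terms still telescope to $-\chi_D^{m}\partial_yu$.
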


\begin{proof}
	Since \(\bm\chi^l u\) is continuous in the \(y\)-direction, the 2D
	operator identity gives
	\[
	\widehat B_{i,j}(\bm\chi^l u,\psi_i^x)
	=
	\int_{J_j}
	\widehat B_i^x(\bm\chi^l u,\psi_i^x)\,\mathrm dy
	-
	\int_{J_j}\int_{I_i}
	\chi_D^l\partial_yu\,\mathrm dx\,\psi_i^x(y)\,\mathrm dy .
	\]
	Because \(\psi_i^x\in\overline V_x\), it is constant in \(x\) on each primal cell.
	Therefore,
	\[
	\widehat B_i^x(\bm\chi^l u,\psi_i^x)
	=
	\frac{\psi_i^x}{2}
	\left[
	\widetilde P_{i+\frac12}^{x}
	(-\chi_D^l u,\phi_{i+\frac12}^x)
	-
	\widetilde P_{i-\frac12}^{x}
	(-\chi_D^l u,\phi_{i-\frac12}^x)
	\right].
	\]
	
	By the directional recursive definition,
	\[
	\begin{aligned}
		\widetilde P_{i+\frac12}^{x}
		(-\chi_D^l u,\phi_{i+\frac12}^x)
		={}&
		\int_{I_{i+\frac12}}
		\chi_D^{l-1}\partial_xu\,
		\phi_{i+\frac12}^x\,\mathrm dx
		+
		\widetilde{\mathcal T}_{i+\frac12}^{x}
		(\phi_{i+\frac12}^x;\bm\chi^{l-1}u)\\&\quad -
		\widetilde{\mathcal{D}}_{i+\frac12}^{x}
		(\bm\chi^{l-2}\partial_yu;\phi_{i+\frac12}^x)
		+
		E_{i+\frac12}^l,
	\end{aligned}
	\]
	where
	$$
	E_{i+\frac12}^l
	:=
	\int_{x_i}^{x_{i+\frac12}}
	\Delta_x^+(\chi_D^{l-1}\partial_yu)
	\phi_{i+\frac12}^x\,\mathrm dx
	+
	\int_{x_{i+\frac12}}^{x_{i+1}}
	\Delta_x^-(\chi_D^{l-1}\partial_yu)
	\phi_{i+\frac12}^x\,\mathrm dx .
	$$
	Using
	$
	\phi_{i+\frac12}^x(x+h_x/2)=\phi_{i+\frac12}^x(x)+1$ 
	and 
	$\phi_{i+\frac12}^x(x-h_x/2)=\phi_{i+\frac12}^x(x)-1,
	$
	a direct half-cell change of variables yields
	$$
	E_{i+\frac12}^l
	=
	-\int_{x_i}^{x_{i+\frac12}}
	\chi_D^{l-1}\partial_yu\,\mathrm dx
	+
	\int_{x_{i+\frac12}}^{x_{i+1}}
	\chi_D^{l-1}\partial_yu\,\mathrm dx .
	$$ 
	The analogous expansion at \(i-\frac12\), followed by subtraction, gives
	\[
	\widehat B_i^x(\bm\chi^l u,\psi_i^x)
	=
	\left[
	\frac12\widetilde R_{i+\frac12}^l(u;y)
	-
	\mathcal I_{i+\frac12}^l(u;y)
	+
	\int_{I_i}\chi_D^{l-1}\partial_yu\,\mathrm dx
	\right]\psi_i^x(y),
	\]
	where \(\widetilde R_{i+\frac12}^l\) denotes the unshifted interface residual.
	Here we used the zero dual-cell average of
	\(\chi_D^{l-1}\partial_yu\) for \(l\ge1\), while \(l=0\) follows from the
	negative-index convention.
	
	Finally, shifting the terms over \(I_{i-\frac12}\) to \(I_{i+\frac12}\), using
	$
	\phi_{i-\frac12}^x(x-h_x)=\phi_{i+\frac12}^x(x) 
	$ 
	and the translation invariance of the local operators on the uniform periodic
	mesh gives
	$
	\widetilde R_{i+\frac12}^l(u;y)=R_{i+\frac12}^l(u;y).
	$
	Combining this identity with the transverse term 
	$
	-\int_{I_i}\chi_D^l\partial_yu\,\mathrm dx
	$ 
	gives the claimed local formula.
\end{proof}
 After aggregating the identity in each direction, we first estimate the aggregation of $\mathcal I_{i+\frac12}^l(u;y)$. 

\begin{lemma}[Estimate for the \(\mathcal{I}_{i+\frac12}^l\)-terms in the local identity]
	\label[lemma]{lem:constant-test-aggregation}
	Let \(m=\min\{k,2\}\). For \(\psi^x\in \overline V_x\), define 
	$
	\psi^x|_{C_{i,j}}=\psi_i^x(y).
	$ 
	Then
	\[
	\sum_j\sum_i
	\left|
	\int_{J_j}
	\left(
	\sum_{l=0}^{m}\mathcal I_{i+\frac12}^l(u;y)
	\right)
	\psi_i^x(y)\,\mathrm dy
	\right|
	\lesssim
	h^{\sigma_k}
	\|u\|_{H^{\sigma_k+1}(\Omega)}
	\|\psi^x\|.
	\]
\end{lemma}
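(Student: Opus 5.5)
We aim to show that the aggregated constant-test quantity $\sum_{l=0}^m \mathcal I^l_{i+\frac12}(u;y)$ telescopes into a total $x$-derivative of high-order corrections plus genuinely small remainders. The plan is to first rewrite each $\mathcal I^l$ using the recursive definition of $\bm\chi^l$, then sum over $l$, and finally estimate what is left.

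\textbf{Step 1: identify $\mathcal I^l$ with the recursion tested against the constant.} Test the definition of $\bm\chi^{l}$ in \Cref{def:correctfun2D} formally with $\psi_2\equiv 1$. By \Cref{rem:constanttest}, $\widetilde P^x_{i+\frac12}(\cdot,1)=0$. The two $\Delta_x^\pm$ integrals collapse, after a half-cell shift, to zero when tested against a constant on the whole dual cell. Hence $\mathcal I^l_{i+\frac12}$ is exactly the residual that the recursion leaves unbalanced on the constant mode. Concretely, for $l\ge1$ the term $\int_{I_{i+\frac12}}\chi_D^{l-1}\partial_xu\,dx$ vanishes, because $\chi_D^{l-1}$ has zero dual-cell average and $\partial_x$ commutes with $\bm\chi$ by translation invariance. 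Only the $l=0$ term $\int_{I_{i+\frac12}}(I-P_D^x)\partial_xu\,dx$ survives, and it is also zero since $P^x_D$ preserves cell averages. Thus
\[
\sum_{l=0}^m\mathcal I^l_{i+\frac12}=\sum_{l=0}^{m}\Bigl[\widetilde{\mathcal T}^x_{i+\frac12}(1;\bm\chi^{l-1}u)-\widetilde{\mathcal D}^x_{i+\frac12}(\bm\chi^{l-2}\partial_yu;1)\Bigr].
\]

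\textbf{Step 2: expand $\widetilde{\mathcal T}(1;\cdot)$ and $\widetilde{\mathcal D}(\cdot;1)$ via the shifting structure.} From the explicit formula in the proof of \Cref{coro:overlap} with $\phi_2=1$, $\widetilde{\mathcal T}^x_{i+\frac12}(1;\bm\chi^{q}u)$ equals $\tau_{\max}^{-1}$ times half-cell integrals of $\chi_C^q(\Delta_x^\pm u)$, plus the point differences $[\chi^q_C(\Delta^-_xu)]_i-[\chi^q_C(\Delta_x^+u)]_{i+1}$. Using \Cref{lem:shifting}, the half-cell integrals combine into a full-cell integral of a zero-mean function, and the remainder is a second difference. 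Similarly, $\widetilde{\mathcal D}^x(\bm w;1)$ reduces to $\int (w_C-w_D(\cdot\pm h_x/2))$, which by shifting is a difference $\Delta_x^\pm$ acting on $\chi^{q}\partial_yu$. Summing over $i$ against $\psi_i^x(y)$ (with $\psi^x$ constant in $x$ on $I_i$, but assigned across dual cells by the relation $I_{i+\frac12}\subset I_i\cup I_{i+1}$), the point-value terms telescope in $i$ up to discrete differences $\psi_{i+1}-\psi_i$. I would avoid needing smoothness of $\psi$ by instead shifting the argument ($v=u-u(\cdot-h_x)$, as in the $R$-terms) so that every surviving term carries an extra $\Delta_x$ on $u$, gaining a factor $h$ per difference.

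\textbf{Step 3: bound the remainders.} Each surviving term has the form $(I-P^x)$ or $\bm\chi^{q}$ applied to $\Delta_x^+\Delta_x^-u$, to $\Delta_x\partial_yu$, or to the Taylor defect $\partial_x v\pm\Delta_x^\pm v/(h/2)$, exactly as in the 1D proof of \Cref{prop:correct_testfuncest}. By \Cref{prop:correctestimate2D} (or \Cref{lem:standard_app}) and $\|\Delta^\pm_xw\|_{H^s}\lesssim h\|w\|_{H^{s+1}}$, such a term is $O(h^{k+1+q+2})$ in $L^2$, or $O(h^{k+3})$ for the $q=0$ Taylor defects. Point values are converted to $L^2$ norms with the trace/inverse inequality, and the factor $h^{-1/2}$ is absorbed by integrating in $y$ against $\psi$. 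The terms with $q=m$ are already of order $h^{k+1+m+\dots}\ge h^{\sigma_k}$, and for $k=1$ the rate caps at $2k+1=3$ directly. Cauchy--Schwarz in $(i,j)$ then gives the bound $h^{\sigma_k}\|u\|_{H^{\sigma_k+1}}\|\psi^x\|$.

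\textbf{Main obstacle.} The delicate part is Step 2 for the $l=0$ level. There $\widetilde{\mathcal T}(1;\bm\chi^{-1})=0$, but the $\widetilde{\mathcal D}(\bm\chi^{-2})$ term vanishes, so the lowest-order contribution comes from $\widetilde{\mathcal T}(1;\bm\chi^0u)$. This term is only $O(h^{k+1})$ per cell individually and must be cancelled, using the weak cancellation of \Cref{prop:testfuncest} applied slicewise in $y$, against the $\widetilde{\mathcal D}(\bm\chi^{0}\partial_y u;1)$ term appearing at level $l=2$. Verifying this cross-level cancellation is the step I expect to require the most careful bookkeeping.
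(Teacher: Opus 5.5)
Your reduction in Step~1 is fine. The gap is that the cancellation your argument rests on is misidentified, and Steps~2--3 do not supply the missing order.

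First, $\widetilde{\mathcal T}^x_{i+\frac12}(1;\bm\chi^0u)$ is not an $O(h^{k+1})$ term waiting to be cancelled. Since $\widetilde P^x_{i+\frac12}(\cdot\,;1)\equiv0$, it equals $\widetilde B^x_{i+\frac12}(\bm\chi^0u;1)$. Because $\bm\chi^0u=(\bm I-\bm P^x)u$ is, for each fixed $y$, exactly the 1D projection error, \Cref{prop:testfuncest} makes it vanish identically. Hence $\mathcal I^0_{i+\frac12}=\mathcal I^1_{i+\frac12}=0$, which already settles $m\le1$. But it also means nothing at level~1 can absorb the transverse term from level $l=2$, namely $\widetilde{\mathcal D}^x_{i+\frac12}(\bm\chi^0\partial_yu;1)=\int_{I_{i+\frac12}}\chi_C^0\partial_yu\,\mathrm dx$.

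That term is too large on its own. By the primal-mesh version of the half-cell identity in the proof of \Cref{prop:testfuncest}, it equals $\tau_{\max}$ times a first difference of cell-center values of $\chi_C^0\partial_yu$. An honest use of the bounds you list in Step~3 gives only $h^{k+2}\|\psi^x\|$ after testing. That is one order short of $h^{\sigma_k}=h^{k+3}$ for $k\ge2$.

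The same problem affects $\widetilde{\mathcal T}^x_{i+\frac12}(1;\bm\chi^1u)$. The shifting formula of \Cref{coro:overlap}, after pairing the two half-cells, yields $\tau_{\max}^{-1}$ times $\chi_C^1$ applied to a single difference $u(\cdot+h_x,\cdot)-u$, so again only $h^{k+2}$. The second differences your Step~3 counts on never arise.

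Nor can you replace $u$ by $v=u-u(\cdot-h_x,\cdot)$ here. That substitution is legitimate for the $R^l_{i+\frac12}$-terms, which come from differences of neighbouring interface quantities. But $\mathcal I^l_{i+\frac12}$ is a single-dual-cell quantity defined with $u$ itself. Telescoping against $\psi_{i+1}-\psi_i$ is also unavailable, since the absolute values sit inside the sum over $i$.

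The missing idea is to use the recursion for $\bm\chi^1$ where it actually carries information: with the linear test functions $\phi_i=2(x-x_i)/h_x$ on the two primal cells overlapping $I_{i+\frac12}$. Tested with a constant, the recursion is vacuous, since it is only imposed on $\hat{\mathbb P}^k_x$.

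Because both components of $\bm\chi^1u$ have zero cell averages, the dual-mesh counterpart of \eqref{eq:B-interface} gives $\widetilde B^x_{i+\frac12}(\bm\chi^1u;1)=\tfrac12\bigl[\widehat P^x_{i+1}(\chi^1_Cu;\phi_{i+1})-\widehat P^x_i(\chi^1_Cu;\phi_i)\bigr]$. Now insert \Cref{def:correctfun2D} with $\psi_1=\phi_i$:
\begin{itemize}
\item The source terms $\int\Delta_x^\pm(\chi_C^0\partial_yu)\,\phi_i\,\mathrm dx$ contribute exactly $\int_{I_{i+\frac12}}\chi_C^0\partial_yu\,\mathrm dx$. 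This cancels the transverse $\widetilde{\mathcal D}$-term identically.
\item What remains is $\widetilde B_{i+\frac12}(\bm R^1u(\cdot,y);1)$, the 1D first-level constant-test residual.
\end{itemize}
Only at this stage does the Taylor-defect argument with the quadratic test function enter, through \Cref{prop:correct_testfuncest}. This is the paper's route: $\mathcal I^0=\mathcal I^1=0$, and $\mathcal I^2$ reduces to the 1D asymptotic weak-cancellation quantity.
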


\begin{proof}
	From the directional residual operator with constant test function \(1\),
	\[
	\begin{aligned}
		\widetilde B_{i+\frac12}^x(\bm\chi^{l-1}u;1)
		={}&
		\int_{I_{i+\frac12}}
		\chi_D^{l-1}\partial_xu\,\mathrm dx
		+
		\widetilde{\mathcal T}_{i+\frac12}^{x}
		(1;\bm\chi^{l-1}u)
		-
		\widetilde{\mathcal{D}}_{i+\frac12}^{x}
		(\bm\chi^{l-2}\partial_yu;1)
		+
		\int_{I_{i+\frac12}}
		\chi_C^{l-2}\partial_yu\,\mathrm dx .
	\end{aligned}
	\]
	Hence,  
	$
	\mathcal I_{i+\frac12}^l(u;y)
	=
	\widetilde B_{i+\frac12}^x(\bm\chi^{l-1}u;1)
	-
	\int_{I_{i+\frac12}}
	\chi_C^{l-2}\partial_yu\,\mathrm dx .
	$ 
	The shifted interface identity one correction level lower gives 
	$
	\mathcal I_{i+\frac12}^l(u;y)
	+
	\mathcal I_{i+\frac12}^{l-1}(u;y)
	=
	\mathcal A_{i+\frac12}^{l-1,x}(u;y),
	$ 
	where
	\[
	\begin{aligned}
		\mathcal A_{i+\frac12}^{l-1,x}(u;y)
		={}&
		\frac12
		\biggl[
		\int_{I_{i+\frac12}}
		\chi_D^{l-2}\partial_x\bigl(u(\cdot-h_x,y)-u(\cdot,y)\bigr)\phi^x\,\mathrm dx
		\\ & +
		\widetilde{\mathcal T}_{i+\frac12}^{x}
		\bigl(\phi^x;\bm\chi^{l-2}(u(\cdot-h_x,y)-u(\cdot,y))\bigr)
		-
		\widetilde{\mathcal{D}}_{i+\frac12}^{x}
		\bigl(\bm\chi^{l-3}\partial_y(u(\cdot-h_x,y)-u(\cdot,y));\phi^x\bigr)
		\biggr].
	\end{aligned}
	\]
	Since all negative correction levels vanish, 
	$
	\mathcal I_{i+\frac12}^0=0.
	$
	For \(l=1\), the 1D weak cancellation for $\bm{\chi}^0u$, together with the convention
	\(\bm\chi^{-1}=0\), gives
	$
	\mathcal I_{i+\frac12}^1=0.
	$ 
	Therefore
	\[
	\sum_{l=0}^{m}\mathcal I_{i+\frac12}^l
	=
	\begin{cases}
		0, & m=0~\mbox{or}~1,\\[1mm]
		\mathcal A_{i+\frac12}^{1,x}(u;y), & m=2.
	\end{cases}
	\]
	
	It remains only to estimate the case \(m=2\). The remaining term is precisely
	the first-level 1D correction residual with constant test function,
	namely
	$
	\mathcal A_{i+\frac12}^{1,x}(u;y)
	=
	\widehat B_i^x(R^1u,1),
	$ 
	where \(R^1u\) denotes the 1D first-level correction function.
	Thus, by the 1D asymptotic weak cancellation estimate
	\Cref{prop:correct_testfuncest}, applied with the piecewise constant test function \(\psi^x\),
	\[
	\begin{aligned}
		\sum_j\sum_i
		\left|
		\int_{J_j}
		\left(
		\sum_{l=0}^{m}\mathcal I_{i+\frac12}^l(u;y)
		\right)
		\psi_i^x(y)\,\mathrm dy
		\right|&=
		\sum_j\sum_i
		\left|
		\int_{J_j}
		\widehat B_i^x(R^1u,1)\psi_i^x(y)\,\mathrm dy
		\right| \\&\quad\lesssim
		h^{\sigma_k}
		\|u\|_{H^{\sigma_k+1}(\Omega)}
		\|\psi^x\|.
	\end{aligned}
	\]
	This proves the result.
\end{proof}

The aggregation of $R_{i+\frac12}^l(u;y)$ is bounded by the following lemma.
\begin{lemma}[Estimate for the \(R_{i+\frac12}^l\)-terms in the local identity]
	\label[lemma]{lem:grouped-RI-estimate}
	Let \(m=\min\{k,2\}\). For \(\psi^x\in \overline V_x\), define 
	$
	\psi^x|_{C_{i,j}}=\psi_i^x(y).
	$ 
	Then
	\[
	\sum_j\sum_i
	\left|
	\int_{J_j}
	\frac12\sum_{l=0}^{m}R_{i+\frac12}^l(u;y)
	\psi_i^x(y)\,\mathrm dy
	\right|
	\lesssim
	h^{\sigma_k}
	\|u\|_{H^{\sigma_k+1}(\Omega)}
	\|\psi^x\|.
	\]
\end{lemma}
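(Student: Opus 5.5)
We plan to estimate the aggregated sum \(\sum_{l=0}^m R^l_{i+\frac12}(u;y)\) by recognizing it as a 1D directional computation applied to the shifted difference \(v=u-u(\cdot-h_x,\cdot)\), and then gaining one extra factor of \(h\) from the difference. Comparing \eqref{eq:def-R-local} with the recursive definition in \Cref{def:correctfun2D}, the first three terms of the right-hand side defining \(\widetilde P^x_{i+\frac12}(-\chi^{l}_D v,\phi^x_{i+\frac12})\) are exactly \(R^l_{i+\frac12}(u;y)\). Hence
\[
R^l_{i+\frac12}(u;y)=\widetilde P^x_{i+\frac12}\bigl(-\chi_D^{l}v,\phi^x_{i+\frac12}\bigr)-E^l_{i+\frac12}(v),
\]
where \(E^l_{i+\frac12}(v)\) is the transverse shift term. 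By the half-cell change of variables used in \Cref{lem:directional-local-identity}, this term reduces to \(-\int_{x_i}^{x_{i+\frac12}}\chi_D^{l-1}\partial_y v\,dx+\int_{x_{i+\frac12}}^{x_{i+1}}\chi_D^{l-1}\partial_y v\,dx\).

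For \(l=0\) the pre-corrections at negative levels vanish, so \(R^0=0\). For \(l\ge1\), the \(E\)-terms are bounded by \(h^{1/2}\|\chi^{l-1}_D\partial_y v\|_{I_{i+\frac12}}\). Using \(\|\partial_y v\|_{H^m}\lesssim h\|u\|_{H^{m+2}}\) together with \Cref{prop:correctestimate2D}, the \(l=2\) term is already of order \(h^{k+4}\). Since \(\sigma_k\le k+3\), the real difficulty lies in the \(l=1\) contributions and in the \(\widetilde P\)-terms.

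For the \(\widetilde P\)-parts, we will not bound each level separately. Instead we telescope across levels, as in the proof of \Cref{lem:constant-test-aggregation}, applying the recursion to \(v\) rather than to \(u\). The sum over \(l\) of \(\widetilde P^x(-\chi^l_D v,\phi^x)\), together with the \(\mathcal T\)-terms of level \(l-1\), should combine into \(\widetilde B^x_{i+\frac12}\)-type residuals of \(\bm\chi^{l}v\) tested against the linear function \(\phi^x\). The \(l=0\) piece, \(\chi^0 v=(I-P^x)v\), gives \(\widetilde P^x((I-P_D^x)v,\phi^x)=0\) by the defining relation of the projection. The \(l=1\) piece is exactly the quantity treated in the proof of \Cref{prop:correct_testfuncest} through identity \eqref{eq:P2Ptilde}, with \(u\) replaced by \(v\).

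Running that argument with \(v\) in place of \(u\) produces \((I-P_C^*)\) applied to Taylor remainders and second differences of \(v\). These are of size \(h^{k+3}\|v\|_{H^{\sigma_k}}\). Because \(v\) is itself a difference, \(\|v\|_{H^{\sigma_k}}\lesssim h\|u\|_{H^{\sigma_k+1}}\), so these terms are of order \(h^{k+4}\), or at least \(h^{\sigma_k}\) after the \(h^{-1}\) coming from \(\tau_{\max}^{-1}\). When \(m=1\) (i.e. \(k=1\)) we use \Cref{coro:overlap} directly, which gives \(h^{-1}\cdot h\cdot h^{k+2}=h^{3}=h^{\sigma_1}\).

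Finally we sum over cells. Cauchy--Schwarz in \(y\) on \(J_j\), followed by summation in \(i,j\), turns the local bounds of the form \(h^{1/2}\|\cdot\|_{C_{i+\frac12,j}}\) into \(h^{\sigma_k}\|u\|_{H^{\sigma_k+1}}\|\psi^x\|\). This uses the fact that \(\psi_i^x\) is constant in \(x\), so \(\|\psi_i^x\|_{L^2(J_j)}h^{1/2}=\|\psi^x\|_{C_{i,j}}\).

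The main obstacle is the order of the \(\mathcal T\)-term at level \(l=1\). Estimated through \Cref{coro:overlap}, it gives only \(h^{-1}\|\chi^0(\Delta_x v)\|\sim h^{k+2}\), which is one power too low for \(k\ge2\). We expect to resolve this by writing the exact \(\mathcal T\) formula from the proof of \Cref{coro:overlap} with the linear test function \(\phi^x\). Since \(\tfrac{1}{\tau_{\max}}\phi^x+\partial_x\phi^x\) is linear, the cell-average preservation of \(P^*\) (\Cref{prop:roots}) together with the Legendre orthogonality of the leading projection-error mode should kill the \(O(h^{k+2})\) part. Only a term involving \(\Delta_x^+\Delta_x^-\) of \(v\) would survive, and that is \(O(h^{k+3})\).
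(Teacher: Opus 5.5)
\textbf{There is a genuine gap.} Your identity $R^l_{i+\frac12}(u;y)=\widetilde P^x_{i+\frac12}(-\chi^l_Dv,\phi^x_{i+\frac12})-E^l_{i+\frac12}(v)$ is correct, but it creates the difficulty rather than resolving it. $R^l_{i+\frac12}$ was defined precisely \emph{without} the transverse shift term: in \Cref{lem:directional-local-identity} the $E$-terms are turned into $\int_{I_i}\chi_D^{l-1}\partial_yu\,\mathrm dx$ and telescoped in the HOCA proof. Your splitting puts that term back.

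With $g=(I-P^x_D)\partial_yv$, the half-cell formula and the linear-test identity from the proof of \Cref{prop:testfuncest} give $E^1_{i+\frac12}(v)=-2\int_{x_i}^{x_{i+\frac12}}g\,\mathrm dx=-2\tau_{\max}\,g(x_{i+\frac12})$. Let $\hat e$ be the LSZ error of $\xi^{k+1}$ on $[-1,1]$ and $\lambda=h/(2\tau_{\max})$. For $k=2$ a direct computation gives $\hat e(0)=-\tfrac{2\lambda}{5(4+3\lambda^2)}\neq0$. Hence $E^1(v)$ is genuinely of size $h^{k+3}$ pointwise and contributes $h^{k+2}\|\psi^x\|$ after summation, one order short of $h^{\sigma_k}$ for $k\ge2$. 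You note that the difficulty lies in the $l=1$ contributions but never close this term. It also cannot be closed on its own: the recursion gives $\widetilde P^x(-\chi^1_Dv,\phi^x)=R^1_{i+\frac12}+E^1(v)$, so telescoping the $\widetilde P$-parts just returns $R^1+E^1(v)$.

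Your proposed fix for $\widetilde{\mathcal T}$ at $l=1$ fails as well. The LSZ error is not a Legendre mode; it is orthogonal only to constants on full cells. Moreover, $\widetilde{\mathcal T}$ consists of half-cell moments against $\tfrac2h(\lambda\phi^x+1)$ plus centre values, which such orthogonality would not kill anyway. Its leading part is exactly the negative of the leading part of $((I-P^x_D)\partial_xv,\phi^x)$; this is what \eqref{eq:P2Ptilde} expresses. That leading part is proportional to $\int_{-1}^1\xi\,\hat e\,\mathrm d\xi$, which equals $\tfrac{4+\lambda^2}{5(4+3\lambda^2)}\neq0$ for $k=2$. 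So neither piece is of order $h^{\sigma_k}$ by itself; only their sum is.

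\textbf{The missing observation is that $R^1_{i+\frac12}$ needs no splitting at all.} On each $y$-slice,
$R^1_{i+\frac12}(u;y)=((I-P^x_D)\partial_xv,\phi^x)+\widetilde{\mathcal T}^x_{i+\frac12}(\phi^x;(\bm I-\bm P^x)v)=-\widetilde P_{i+\frac12}(R^1_Dv,\phi^x)$,
where $R^1$ is the \emph{1D} first-level correction of \Cref{def:correctfunc}. By \eqref{eq:B-interface} and the shift of $I_{i-\frac12}$ onto $I_{i+\frac12}$, this is $\widehat B_i^x(R^1u,1)$ up to a factor $2$ and a sign. So the $l=1$ contribution follows at once from \Cref{prop:correct_testfuncest} with the piecewise-constant test function $\psi^x$. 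That proof already contains the pairing of $\widetilde{\mathcal T}$ with $((I-P_D)\partial_xv,\phi)$ into $(I-P_C^\ast)$ applied to Taylor remainders, which you describe.

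The other levels are straightforward. $R^0=0$ by the vanishing convention, and $R^2$ is bounded directly by \Cref{prop:correctestimate2D} plus the extra factor $h$ from the difference $v$. This is the paper's proof.

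Finally, do not re-run the 1D argument ``with $u$ replaced by $v$''. That argument already produces $v$ from the two neighbouring dual cells, so applying it again bounds a second-difference quantity, not $R^1$. This double count is why your order bookkeeping wavers between $h^{k+4}$ and $h^{\sigma_k}$.
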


\begin{proof}
	Set
	$
	A_{i+\frac12}(y)
	=
	\frac12\sum_{l=0}^{m}R_{i+\frac12}^l(u;y)
	-
	\sum_{l=0}^{m}\mathcal I_{i+\frac12}^l(u;y),
	$ 
	and define
	$
	v(x,y)=u(x,y)-u(x-h_x,y) $ on $ 
	\omega_{i+\frac12}^x=I_i\cup I_{i+1}.
	$
	We estimate the three possible values of \(m\). The case \(m=0\) is immediate
	from the definitions and the vanishing convention for negative correction
	levels.
	
	When \(m=1\), the constant-test aggregation lemma gives
	$
	\sum_{l=0}^{1}\mathcal I_{i+\frac12}^l(u;y)=0.
	$ 
	Moreover, the \(l=1\) directional residual block is the 1D
	first-level correction residual with constant test function,
	$
	\frac12 R_{i+\frac12}^{1}(u;y)
	=
	\widehat B_i^x(R^1u,1).
	$ 
	Thus, by \Cref{prop:correct_testfuncest}, applied with the locally constant
	test value
	\(\operatorname{sgn}(R_{i+\frac12}^{1})\psi_i^x(y)\), we obtain
	\[
	\sum_j\sum_i
	\left|
	\int_{J_j}
	\frac12 R_{i+\frac12}^{1}(u;y)\psi_i^x(y)\,\mathrm dy
	\right|
	\lesssim
	h^{\sigma_k}
	\|u\|_{H^{\sigma_k+1}(\Omega)}
	\|\psi^x\|.
	\]
	This proves the desired estimate for \(m=1\).
	
	It remains to consider \(m=2\). The \(l=1\) part is treated exactly as above,
	namely
	$
	\frac12 R_{i+\frac12}^{1}(u;y)=\widehat B_i^x(R^1u,1),
	$
	and hence its tested contribution is bounded by
	$
	h^{\sigma_k}
	\|u\|_{H^{\sigma_k+1}(\Omega)}
	\|\psi^x\|.
	$ 
	For the \(l=2\) part, the correction-function bounds, the trace estimate, and
	the finite-difference structure induced by the shifting property give
	\[
	\begin{aligned}
		|R_{i+\frac12}^2(u;y)|
		&\lesssim
		h^{1/2}
		\bigl(
		\|\chi_D^{1}\partial_x v\|_{I_{i+\frac12}}
		+h^{-1}\|\bm\chi^{1}(\Delta_x^\pm v)\|_{I_{i+\frac12}}
		+\|\bm\chi^{0}(\Delta_x^\pm\partial_y v)\|_{I_{i+\frac12}}
		\bigr)
		\\&\quad \lesssim
		h^{\sigma_k+\frac12}
		\Bigl(
		\|u(\cdot,y)\|_{H^{\sigma_k+1}(\omega_{i+\frac12}^x)}
		\Bigr),
	\end{aligned}
	\]
	where \(\omega_{i+\frac12}^x=I_i\cup I_{i+1}\). Multiplying by
	\(\psi_i^x(y)\), summing over \(i,j\), and using Cauchy--Schwarz together with
	the uniformly bounded overlap of \(\omega_{i+\frac12}^x\), we obtain directly
	\[
	\sum_j\sum_i
	\left|
	\int_{J_j}
	R_{i+\frac12}^{2}(u;y)\psi_i^x(y)\,\mathrm dy
	\right|
	\lesssim
	h^{\sigma_k}
	\|u\|_{H^{\sigma_k+1}(\Omega)}
	\|\psi^x\|.
	\]
	The \(l=1\) part has already been written as
	$
	\frac12 R_{i+\frac12}^{1}(u;y)
	=
	\widehat B_i^x(R^1u,1),
	$ 
	and is controlled by the 1D asymptotic weak cancellation estimate.
This yields
	$$
	\sum_j\sum_i
	\left|
	\int_{J_j}
	\frac12\sum_{l=0}^{m}R_{i+\frac12}^l(u;y)\psi_i^x(y)\,\mathrm dy
	\right|
	\lesssim
	h^{\sigma_k}
	\|u\|_{H^{\sigma_k+1}(\Omega)}
	\|\psi^x\|.
	$$
\end{proof}

We now prove \Cref{prop:correct_testfuncest2D}.

\begin{proof}
	We prove the \(x\)-direction estimate. The \(y\)-direction estimate follows
	by interchanging \(x\) and \(y\).
	All correction functions with negative superscripts are understood to be zero. Set \(m:=\min\{k,2\}\). For each \(0\le l\le m\), we split
	\[
	\widehat B_{i,j}(\bm X^l u,\psi_i^x)
	=
	\widehat B_{i,j}\bigl((\bm G^y-\bm I)\bm\chi^l u,\psi_i^x\bigr)
	+
	\widehat B_{i,j}(\bm\chi^l u,\psi_i^x).
	\]	
		The local weak boundedness of \(\widehat B_{i,j}\) gives
		\[
		\sum_{i,j}
		\left|
		\widehat B_{i,j}
		\bigl((\bm G^y-\bm I)\bm\chi^l u,\psi_i^x\bigr)
		\right|
		\lesssim
		h^{-1}
		\|(\bm G^y-\bm I)\bm\chi^l u\|
		\|\psi_i^x\|.
		\]
		Because \(\bm\chi^l\) is an \(x\)-directional correction, it commutes with
		\(\partial_y\). Hence, using the standard approximation estimate for \(\bm G^y\) and \Cref{prop:correctestimate2D}, we have
		\begin{align}\label{equ:transverse--GLest}
			\sum_{i,j}\left|
			\widehat B_{i,j}\bigl((\bm G^y-\bm I)\bm\chi^l u,\psi_i^x\bigr)
			\right|\lesssim 	h^{-1}\|(\bm G^y-\bm I)\bm\chi^l u\|\,\|\psi_i^x\|
			\lesssim
			h^{\sigma_k}
			\|u\|_{H^{\sigma_k+1}}\,\|\psi_i^x\|.
		\end{align}
	It remains to estimate the contribution of \(\bm\chi^l u\). For this purpose, we set
	\[
	\begin{aligned}
		R_{i+\frac12}^l(u;y)
		:={}&
		\int_{I_{i+\frac12}}
		\chi_D^{l-1}(\partial_xv_i)\phi^x\,\mathrm dx
		+
		\widetilde{\mathcal T}_{i+\frac12}^{x}
		(\phi^x;\bm\chi^{l-1}v_i)
		-
		\widetilde{\mathcal{D}}_{i+\frac12}^{x}
		(\bm\chi^{l-2}\partial_yv_i;\phi^x),
	\end{aligned}
	\]
	and
	$$
	\begin{aligned}
		\mathcal I_{i+\frac12}^l(u;y)
		:={}&
		\int_{I_{i+\frac12}}
		\chi_D^{l-1}\partial_xu\,\mathrm dx
		+
		\widetilde{\mathcal T}_{i+\frac12}^{x}
		(1;\bm\chi^{l-1}u)
		-
		\widetilde{\mathcal{D}}_{i+\frac12}^{x}
		(\bm\chi^{l-2}\partial_yu;1),
	\end{aligned}
	$$ for fixed $y$ with 
	\(
	v_i(x,y):=u(x,y)-u(x-h_x,y)\) and \(
	\phi^x:=\frac{2(x-x_{i+\frac12})}{h_x}
	\quad\text{on } I_{i+\frac12}.\)
	The local calculation in \cref{lem:directional-local-identity} gives
	\[
	\begin{aligned}
		\widehat B_{i,j}(\bm\chi^l u,\psi_i^x)
		=
		\int_{J_j}
		\left[
		\frac12R_{i+\frac12}^l(u;y)
		-
		\mathcal I_{i+\frac12}^l(u;y)
		+
		\int_{I_i}
		(\chi_D^{l-1}-\chi_D^l)\partial_yu\,\mathrm dx
		\right]\psi_i^x(y)\,\mathrm dy .
	\end{aligned}
	\]
	Summing over \(l=0,\ldots,\min\{k,2\}\), and using \(\chi_D^{-1}=0\), yields
	\[
	\begin{aligned}
		\sum_{l=0}^{\min\{k,2\}}
		\widehat B_{i,j}(\bm\chi^l u,\psi_i^x)
		={}&
		\int_{J_j}
		\left[
		\frac12\sum_{l=0}^{\min\{k,2\}}R_{i+\frac12}^l(u;y)
		-
		\sum_{l=0}^{\min\{k,2\}}\mathcal I_{i+\frac12}^l(u;y)
		\right]\psi_i^x(y)\,\mathrm dy
		\\
		&-
		\int_{J_j}\int_{I_i}
		\chi_D^{\min\{k,2\}}\partial_yu\,\mathrm dx\,\psi_i^x(y)\,\mathrm dy .
	\end{aligned}
	\]
	
	By \Cref{lem:grouped-RI-estimate,lem:constant-test-aggregation}, we have
	\[
	\begin{aligned}
		&\sum_{i,j}
		\left|
		\int_{J_j}
		\left[
		\frac12\sum_{l=0}^{\min\{k,2\}}R_{i+\frac12}^l(u;y)
		-
		\sum_{l=0}^{\min\{k,2\}}\mathcal I_{i+\frac12}^l(u;y)
		\right]\psi_i^x(y)\,\mathrm dy
		\right|\lesssim
		h^{\sigma_k}
		\|u\|_{H^{\sigma_k+1}(\Omega)}
		\|\psi_i^x\|.
	\end{aligned}
	\]
	Here the constant-test terms \(\mathcal I_{i+\frac12}^l\) are aggregated before
	estimation; the cancellation mechanism is proved in
	\Cref{lem:constant-test-aggregation}. In particular,
	\(
	\sum_{l=0}^{m}\mathcal I_{i+\frac12}^l
	=
	0
	\quad\text{for }m=0,1,
	\)
	while for \(m=2\) the sum reduces to a first-level 1D asymptotic
	weak cancellation property. It remains to bound the last transverse term. By the correction estimate,
	\[
	\begin{aligned}
		&\sum_{i,j}
		\left|
		\int_{J_j}\int_{I_i}
		\chi_D^{\min\{k,2\}}\partial_yu\,\mathrm dx\,\psi_i^x(y)\,\mathrm dy
		\right|
		\le
		\|\bm\chi^{\min\{k,2\}}(\partial_yu)\|\,\|\psi_i^x\|
		\lesssim
		h^{\sigma_k}
		\|u\|_{H^{\sigma_k+1}(\Omega)}
		\|\psi_i^x\|.
	\end{aligned}
	\]
	Combining the preceding estimates with \eqref{equ:transverse--GLest}, this
	implies
	\[
	\sum_{i,j}	\left|
	\sum_{l=0}^{m}
	\widehat B_{i,j}(\bm X^l u,\psi_i^x)
	\right|
	\lesssim
	h^{\sigma_k}
	\|u\|_{H^{\sigma_k+1}(\Omega)}
	\|\psi_i^x\|.
	\]
	This proves the \(x\)-direction estimate.
\end{proof}

\subsection{Proof of \cref{thm:supercon2D}}
From \eqref{equ:splitting}, we decompose the residual of the corrected error equation on the primal cell \( C_{i,j} \) into three components
\[
\biggl| \int_{C_{i,j}} \partial_t \hat{\zeta}_C \phi_1 \,\mathrm{d}x\mathrm{d}y - \hat{B}_{i,j}(\hat{\bm{\zeta}}, \phi_1) \biggr| \leq |\mathcal{C}_{i,j}^x(\phi_1)| + |\mathcal{C}_{i,j}^y(\phi_1)| + |\mathcal{C}_{i,j}^{xy}(\phi_1)|,
\]
where
\begin{align}
	\mathcal{C}_{i,j}^x(\phi_1) &:= \int_{C_{i,j}}  \sum_{l=0}^{\min\{k,2\}}\partial_t (	\bm{X}_C^{l}u) \phi_1 \,\mathrm{d}x\mathrm{d}y - \hat{B}_{i,j}(\hat{\bm{\eta}}_x, \phi_1),\label{equ:Cijx} \\
	\mathcal{C}_{i,j}^y(\phi_1) &:= \int_{C_{i,j}} \sum_{l=0}^{\min\{k,2\}}\partial_t( \bm{\Gamma}_C^{l}u) \phi_1 \,\mathrm{d}x\mathrm{d}y - \hat{B}_{i,j}(\hat{\bm{\eta}}_y, \phi_1),\label{equ:Cijy} \\
	\mathcal{C}_{i,j}^{xy}(\phi_1) &:= \int_{C_{i,j}} \partial_t \tilde{\eta}_C \phi_1 \,\mathrm{d}x\mathrm{d}y - \hat{B}_{i,j}(\tilde{\bm{\eta}}, \phi_1).
\end{align}

\subsubsection*{Estimate of \( |\mathcal{C}_{i,j}^{xy}(\phi_1)| \):}
From the local weak boundedness (see \cref{lem:weaklybdd}), we directly obtain 
$
\sum_{i,j} |\mathcal{C}_{i,j}^{xy}(\phi_1)| \lesssim h^{\sigma_k} \| \phi_1 \|.
$ 
The estimates in the \( x \)- and \( y \)-directions are analogous;  therefore, we present the proof for one representative case.

\subsubsection*{Estimate of \( |\mathcal{C}_{i,j}^y(\phi_1)| \):}
We split the term \( \mathcal{C}_{i,j}^y(\phi_1) \) into two parts:
\begin{equation}\label{equ:Cysplit}
	|\mathcal{C}_{i,j}^y(\phi_1)| \leq |\mathcal{C}_{i,j}^y(\phi^0)| + |\mathcal{C}_{i,j}^y(\phi)|,
\end{equation}
where \( \phi_1 \in V_h \) is split into \( \phi^0 \in V_y^0 \) and \( \phi \in \overline{V}_y \). 
\Cref{prop:correct_testfuncest2D} estimates the second term in \eqref{equ:Cysplit}:
$
\sum_{i,j} |\mathcal{C}_{i,j}^y(\phi)| \lesssim h^{\sigma_k} \|\phi\|.
$ 
From \Cref{def:correctfun2D},
\[
\begin{aligned}
	\mathcal C_{i,j}^y(\phi^0)
	={}&
	\sum_{l=0}^{\min\{k,2\}}
	\int_{I_i}
	\biggl[
	\widehat P_j^y\bigl(\gamma_C^l u,\phi^0\bigr)
	+
	\bigl(\gamma_C^{l-1}\partial_yu,\phi^0\bigr)_{J_j}
	+
	\widehat{\mathcal T}_j^y
	\bigl(\phi^0;\bm\gamma^{l-1}u\bigr)
	\\
	&\qquad\qquad
	+
	\int_{J_j}
	(\gamma_C^{l-1}\partial_xu-\gamma_D^{l-1}\partial_xu)\,\phi^0\,\mathrm dy
	\biggr]
	\,\mathrm dx
	\\
	&+
	\biggl(
	\partial_t(I-G_C^x)
	\sum_{l=0}^{\min\{k,2\}}\gamma_C^l u,\phi^0
	\biggr)_{i,j}
	+
	\widehat B_{i,j}
	\biggl(
	(I-\bm G^x)
	\sum_{l=0}^{\min\{k,2\}}\bm\gamma^l u,\phi^0
	\biggr).
\end{aligned}
\]
\Cref{lem:interface2D} combined with \Cref{prop:correctestimate2D} yields the desired estimate for the first term in \eqref{equ:Cysplit}: 
$
\sum_{i,j} |\mathcal{C}_{i,j}^y(\phi^0)| \lesssim h^{\sigma_k} \|\phi^0\|.
$

Then, summing over all cells yields
\[
\sum_{i,j} \bigl( |\mathcal{C}_{i,j}^x(\phi_1)| + |\mathcal{C}_{i,j}^y(\phi_1)| + |\mathcal{C}_{i,j}^{xy}(\phi_1)| \bigr) \lesssim h^{\sigma_k} \|\phi_1\|.
\]
The same bound also holds on the dual mesh. Taking \( \bm{\phi} = \hat{\bm{\zeta}} \) and summing the estimates over all primal and dual cells, we obtain the \( L^2 \) estimate: 
$
\frac{1}{2} \frac{d}{dt} \| \hat{\bm{\zeta}} \|^2 \lesssim h^{\sigma_k} \| \hat{\bm{\zeta}} \|.
$
Assuming \( \|\hat{\bm{\zeta}}(\cdot, 0) \|\lesssim  h^{\sigma_k} \), an application of Gr\"onwall's inequality completes the proof of the superconvergence theorem.

\subsection{Proof of the LSZ projection superconvergence rate}
	\label{subsec:baseLSZ}
	
	The preceding analysis identifies the asymptotic weak cancellation responsible for the superconvergence of the CDG method. We now prove the pointwise superconvergent phenomenon under the LSZ projection initialization in 1D and tensor-product projection initialization in 2D. The key point is that the discrete \(\ell_h^2\) pointwise estimate, together with the corresponding estimates for difference quotient operators, can be upgraded to an \(\ell^\infty\) estimate by a discrete Sobolev inequality on the sampled superconvergent points. For later use, define the uncorrected projection operator \(\bm{P}:=\bm{P}^*\) in 1D and \(\bm{P}:=\bm{Q}^*\) in 2D.

	Let \(G_K\) denote the local set of superconvergent points in a primal cell \(K\). In 1D, \(G_K\) is the set of roots in \(I_i\) of \((x-x_i)^{k+1}-P_C^\ast (x-x_i)^{k+1}\); in 2D, it is the tensor product of the corresponding 1D root sets in the \(x\)- and \(y\)-directions. We write \(G\) for the global set of all such points and \(G^{(m)}\) for the collection of points with a fixed local label \(m\). For a function \(w\) sampled on \(G\), define
	\[
	\|w\|_{\ell_h^2(G)}:=\left(\sum_{K\in\Gamma_h^C}|K|\max_{\xi\in G_K}|w(\xi)|^2\right)^{1/2},
	\qquad
	\|w\|_{\ell^\infty(G)}:=\max_{K\in\Gamma_h^C}\max_{\xi\in G_K}|w(\xi)|.
	\]
	The corresponding norms on \(G^{(m)}\) are defined analogously, without the maximum over local labels.
	
For \(\bm\alpha\in\{0,1\}^d\), let \(D^{\bm\alpha}=(D_x^+)^{\alpha_1}\) in 1D with \(D_x^+w=(w(\cdot+h)-w)/h\), and let \(D^{\bm\alpha}=(D_x^+)^{\alpha_1}(D_y^+)^{\alpha_2}\) in 2D with \(D_x^+w=(w(\cdot+h_x,\cdot)-w)/h_x\) and \(D_y^+w=(w(\cdot,\cdot+h_y)-w)/h_y\).
	
	\begin{lemma}[$\ell_h^2$ pointwise estimate based on the LSZ projection]
		\label[lemma]{lem:l2point}
		Assume the regularity required by the corrected residual estimates in \Cref{thm:supercon1D,thm:supercon2D} and \(u\in L^\infty([0,T];W^{k+2,\infty}(\Omega))\). If the semidiscrete CDG solution is initialized by the corresponding uncorrected projection \(\bm P\), then
		\(
		\|u(\cdot,T)-u_h^C(\cdot,T)\|_{\ell_h^2(G)}\lesssim h^{k+2}.
		\)
	\end{lemma}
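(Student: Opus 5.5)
We split the pointwise error at the superconvergent points as
\[
u-u_h^C=(u-P_Cu)-\zeta_C,\qquad \zeta_C:=u_h^C-P_Cu,
\]
and bound the two pieces separately in \(\ell_h^2(G)\).

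\textbf{Projection part.} For the first term, we use the definition of \(G_K\) as the roots of \((x-x_i)^{k+1}-P_C^\ast(x-x_i)^{k+1}\), whose existence is guaranteed by \Cref{prop:roots}. On each cell we Taylor expand \(u\) about the cell center to degree \(k+1\). The degree-\(\le k\) part is reproduced exactly by \(P_C^\ast\). The degree-\(k+1\) part vanishes at the points of \(G_K\). The remainder is \(\mathcal O(h^{k+2})\) in \(L^\infty(K)\) by \(u\in W^{k+2,\infty}\), and \Cref{lem:linfproj} keeps its projection of the same size. Hence \(|(u-P_Cu)(\xi)|\lesssim h^{k+2}\) at every \(\xi\in G_K\). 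In 2D, the tensor-product structure of \(\bm Q^\ast\) and of \(G_K\) gives the same bound: we expand in \(x\) and \(y\), and the mixed term \((I-P^x)(I-P^y)u\) is of higher order. It follows that \(\|u-P_Cu\|_{\ell_h^2(G)}\lesssim h^{k+2}\).

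\textbf{Discrete error part.} For \(\zeta_C\in V_h\), finite dimensionality on the reference cell gives the norm equivalence
\[
\|\zeta_C\|_{\ell_h^2(G)}\lesssim \|\zeta_C\|.
\]
So it suffices to show \(\|\bm\zeta(T)\|\lesssim h^{k+2}\). We write \(\bm\zeta=\hat{\bm\zeta}-\sum_{l\ge1}\bm R^l u\), or \(\hat{\bm\zeta}-\sum_{l\ge1}(\bm X^lu+\bm\Gamma^lu)\) in 2D. By \Cref{prop:correctestimate} and \Cref{prop:correctestimate2D}, every correction with \(l\ge1\) has norm at most \(h^{k+2}\).

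Under the uncorrected initialization \(\bm\zeta(0)=0\), the corrected error starts at
\[
\hat{\bm\zeta}(0)=\sum_{l\ge1}\bm R^lu(\cdot,0)=\mathcal O(h^{k+2}).
\]
The residual analysis in the proofs of \Cref{thm:supercon1D,thm:supercon2D} gives \(\frac{d}{dt}\|\hat{\bm\zeta}\|^2\lesssim h^{\sigma_k}\|\hat{\bm\zeta}\|\), and this does not depend on the initial data. Gr\"onwall's inequality then yields
\[
\|\hat{\bm\zeta}(T)\|\lesssim \|\hat{\bm\zeta}(0)\|+Th^{\sigma_k}\lesssim h^{k+2},
\]
since \(\sigma_k\ge k+2\) for \(k\ge1\). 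Therefore \(\|\bm\zeta(T)\|\lesssim h^{k+2}\).

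\textbf{Main obstacle.} The delicate step is the projection part. We must check that the sampled projection error is uniformly \(h^{k+2}\), including the transverse and mixed contributions in 2D. We also need \(G_K\) to be translation-invariant across cells, which holds on uniform meshes, so that the norm equivalence constants are uniform. Both points follow from scaling to the reference cell together with the \(L^\infty\)-stability of \(P^\ast\) in each direction.
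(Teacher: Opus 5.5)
Your proposal is correct and follows the paper's proof in all essentials. The projection error is bounded by the Taylor-polynomial/root argument with the $L^\infty$-stability of $P_C^\ast$. The discrete error is handled through the same decomposition $\zeta_C=\hat\zeta_C-\sum_{l\ge1}R_C^lu$, with the corrected-error Gr\"onwall estimate restarted from $\|\hat{\bm\zeta}(0)\|\lesssim h^{k+2}$ and an inverse inequality on the sample points.

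There are only cosmetic differences. In 2D the paper splits $I-Q_C^\ast=(I-P_C^{x,\ast})+P_C^{x,\ast}(I-P_C^{y,\ast})$, which avoids your separate check that the mixed term $(I-P^x)(I-P^y)u$ is $\mathcal O(h^{k+2})$ pointwise; that check does go through with $u\in W^{k+2,\infty}$. The paper also applies the inverse inequality to $\hat\zeta_C$ and to the corrections separately, rather than to $\zeta_C$. What you call a norm equivalence is only the one-sided inverse estimate, and that is all you use.
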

	
	\begin{proof}
	We first consider the 1D case. Let \(\widehat u\) be the
	degree-\((k+1)\) Taylor polynomial of \(u(\cdot,T)\) at the center of
	\(I_i\). For \(x\in G_i\), the triangle inequality and the
	\(L^\infty\)-stability of \(P_C^\ast\) give
	\[
	|u(x,T)-u_h^C(x,T)|
	\lesssim
	\|u-\widehat u\|_{L^\infty(I_i)}
	+
	|\widehat u(x)-P_C^\ast\widehat u(x)|
	+
	|\zeta_C(x,T)|.
	\]
	The Taylor remainder is \(\mathcal O(h^{k+2})\). Moreover, since
	\(P_C^\ast\) is exact on \(\mathbb P^k\), the projection error of
	\(\widehat u\) is determined only by its degree-\((k+1)\) part, which
	vanishes at the superconvergent points \(G_i\). Hence
	\[
	\|u(\cdot,T)-u_h^C(\cdot,T)\|_{\ell_h^2(G)}
	\lesssim
	h^{k+2}
	+
	\|\zeta_C(\cdot,T)\|_{\ell_h^2(G)} .
	\]
	
	In 2D, at the tensor-product superconvergent points,
	\(u-u_h^C=(u-Q_C^\ast u)-\zeta_C\). For the projection part, using
	\(I-Q_C^\ast=(I-P_C^{x,\ast})+P_C^{x,\ast}(I-P_C^{y,\ast})\),
	the 1D root estimate in each coordinate direction and the
	\(L^\infty\)-stability of \(P_C^{x,\ast}\) yield
	$
	\|u(\cdot,T)-Q_C^\ast u(\cdot,T)\|_{\ell_h^2(G)}
	\lesssim h^{k+2}.
	$ 
	Therefore, in both 1D and 2D, it remains only to estimate
	\(\zeta_C\) in the corresponding discrete \(\ell_h^2\) norm.
	
	Let
	\[
	\mathcal C_{h}u
	:=
	\begin{cases}
		\displaystyle\sum_{l=1}^{m}R_C^l u, & d=1,\\[1ex]
		\displaystyle\sum_{l=1}^{m}(X_C^l u+\Gamma_C^l u), & d=2,
	\end{cases}
	\qquad m=\min\{k,2\}.
	\]
	Then
	$
	\zeta_C=\hat\zeta_C-\mathcal C_{h}u.
	$ 
	Repeating the corrected-error superconvergence estimate in
	\Cref{thm:supercon1D,thm:supercon2D} with the weaker initial bound
	\(\|\hat{\bm\zeta}(0)\|\lesssim h^{k+2}\) gives
	\(
	\|\hat{\bm\zeta}(T)\|\lesssim h^{k+2},
	\)
	because the residual forcing is of order \(h^{\sigma_k}\) and
	\(\sigma_k\ge k+2\).
	
	Consequently, by the inverse inequality on the finite set of superconvergent
	points and the correction estimates,
	\[
	\begin{aligned}
		\|\zeta_C(\cdot,T)\|_{\ell_h^2(G)}
		&\le
		\|\hat\zeta_C(\cdot,T)\|_{\ell_h^2(G)}
		+
		\|\mathcal C_hu(\cdot,T)\|_{\ell_h^2(G)}
		\lesssim
		\|\hat{\bm\zeta}(T)\|
		+
		\|\mathcal C_hu(\cdot,T)\|
		\lesssim h^{k+2}.
	\end{aligned}
	\]
	Combining this with the projection-root estimate above proves the lemma.
\end{proof}
	
Since the equation is linear and the mesh is uniform, the difference quotient
operators commute with the CDG spatial operator, the projection, and the
correction operators. In particular,
\(
D^{\bm\alpha}\bm u_h(\cdot,0)
=
D^{\bm\alpha}\bm P u(\cdot,0)
=
\bm P D^{\bm\alpha}u(\cdot,0).
\)
Hence \(D^{\bm\alpha}\bm u_h\) is the CDG solution for
\(D^{\bm\alpha}u\) with the LSZ projection initialization. Applying
\Cref{lem:l2point} to \(D^{\bm\alpha}u\) gives the following estimate.

	\begin{corollary}[Difference-quotient pointwise estimate]
		\label[corollary]{lem:l2diffpoint}
		Let \(\bm\alpha\in\{0,1\}^d\). Assume
		$$
		u\in C^1([0,T];H^{\sigma_k+1+|\bm\alpha|}(\Omega))
		\cap
		L^\infty([0,T];W^{k+2+|\bm\alpha|,\infty}(\Omega)).
		$$ 
		Then, under the LSZ projection initialization, 
		$$
		\|D^{\bm\alpha}(u(\cdot,T)-u_h^C(\cdot,T))\|_{\ell_h^2(G)}\lesssim h^{k+2}.
		$$
	\end{corollary}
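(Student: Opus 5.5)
The plan is to reduce the corollary to \Cref{lem:l2point} applied to the shifted datum $w:=D^{\bm\alpha}u$. We argue in three stages: the difference quotient solves the same problem, it is initialized correctly, and its sampled error coincides with the quantity in the statement. Throughout, $D^{\bm\alpha}$ is a composition of forward differences with shift $h_x$ (and $h_y$), i.e., shifts by a full cell.

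\emph{Step 1.} On the uniform periodic overlapping mesh, a full-cell shift maps primal cells to primal cells and dual cells to dual cells. Hence $V_h$, $W_h$, the local forms $\hat B$, $\tilde B$, and the local projections $\widehat P_i$, $\widetilde P_{i+1/2}$ are all translation invariant. This gives $D^{\bm\alpha}\bm P=\bm P D^{\bm\alpha}$ for both $\bm P^\ast$ and $\bm Q^\ast=\bm P^x\bm P^y$, and the same commutation for $\bm R^l$, $\bm X^l$, $\bm\Gamma^l$, since their defining relations are built from the same local operators. Linearity of \eqref{equ:conservation_law} with constant $\boldsymbol\beta$ then shows that $w$ is an exact solution. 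Applying $D^{\bm\alpha}$ to \eqref{equ:semiCDG} and reindexing the cell sum shows that $D^{\bm\alpha}\bm u_h$ satisfies the CDG system. Its initial datum is $D^{\bm\alpha}\bm P u(\cdot,0)=\bm P w(\cdot,0)$. Uniqueness of the semidiscrete ODE solution then identifies $D^{\bm\alpha}\bm u_h$ with the CDG solution for $w$ under the uncorrected (LSZ/tensor) initialization.

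\emph{Step 2.} Check the hypotheses of \Cref{lem:l2point} for $w$. Regularity follows from the triangle inequality, with no gain in $h$ needed, because $D^{\bm\alpha}$ is bounded on each Sobolev space with one extra derivative:
\[
\|D_x^+ u\|_{H^s}\lesssim\|u\|_{H^{s+1}},\qquad \|D_x^+u\|_{W^{s,\infty}}\lesssim \|u\|_{W^{s+1,\infty}}.
\]
This gives $w\in C^1([0,T];H^{\sigma_k+1})\cap L^\infty([0,T];W^{k+2,\infty})$, with norms bounded by the $|\bm\alpha|$-shifted norms in the hypothesis. Inside the lemma, the constants depend on $u$ only through these norms: the Taylor remainder uses $\|w\|_{W^{k+2,\infty}}$, and the Gr\"onwall forcing uses $\|w\|_{H^{\sigma_k+1}}$ and $\|\partial_t w\|_{H^{\sigma_k+1}}$. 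The initial corrected error for $w$ is $\mathcal C_h w(\cdot,0)$, which is $O(h^{k+2})$ by \Cref{prop:correctestimate,prop:correctestimate2D}.

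\emph{Step 3.} Identify the sampled quantity. Since $h_x$ is a full cell, the superconvergent point sets satisfy $G_K+h_x e_1=G_{K'}$, with the same local labels, for the neighboring cell $K'$. Therefore, for $\xi\in G$,
\[
D^{\bm\alpha}(u-u_h^C)(\xi)=(w-D^{\bm\alpha}u_h^C)(\xi),
\]
and $D^{\bm\alpha}(u-u_h^C)$ involves only values on $G$. \Cref{lem:l2point} then gives the bound $h^{k+2}$.

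The main obstacle is Step 1, namely verifying that the correction operators (through the $\widehat{\mathcal D}$ and shifted-difference terms) and the Gauss--Lobatto tensorization commute with full-cell shifts. This is where uniformity and periodicity are essential. I would check it by writing each defining relation on cell $I_{i+1}$ and performing the change of variables $x\mapsto x+h$, as in the proof of \Cref{lem:shifting}.
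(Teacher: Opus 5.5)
Your proposal is correct and follows the paper's argument. Full-cell difference quotients commute with the CDG operator and with $\bm P$, so $D^{\bm\alpha}\bm u_h$ is the projection-initialized CDG solution for $D^{\bm\alpha}u$, and \Cref{lem:l2point} applied to $D^{\bm\alpha}u$ gives the bound. Your bookkeeping fills in details the paper leaves implicit: uniform-in-$h$ norms for the $h$-dependent datum $D^{\bm\alpha}u$, and translation invariance of $G$.

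What you call the main obstacle is not needed. Commuting the correction operators $\bm R^l,\bm X^l,\bm\Gamma^l$ with $D^{\bm\alpha}$ plays no role, because \Cref{lem:l2point} is applied to $w$ as a black box and builds its own corrections $\mathcal C_h w$.
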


\begin{lemma}[Discrete Sobolev inequality at superconvergent points]
	\label[lemma]{lem:discrete_sobolev_roots}
	For \(d=1,2\) and any periodic function \(v\),
	$$
	\|v\|_{\ell^\infty(G)}
	\lesssim
	\sum_{\bm\alpha\in\{0,1\}^d}\|D^{\bm\alpha}v\|_{\ell_h^2(G)}.
	$$
\end{lemma}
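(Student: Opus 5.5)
The plan is to reduce the statement to the classical discrete Sobolev embedding on a uniform periodic lattice, applied separately to each local-label subfamily $G^{(m)}$. The key observation is that, on a uniform mesh, the points of $G^{(m)}$ form a translated copy of the uniform grid. In 1D they are $\{x_i+\theta_m h\}_i$ for a fixed offset $\theta_m$, because the root set of $(x-x_i)^{k+1}-P_C^\ast(x-x_i)^{k+1}$ is obtained from a reference cell by translation and scaling. In 2D they are the tensor product of two such 1D families. The shift operators $w\mapsto w(\cdot+h_x,\cdot)$ and $w\mapsto w(\cdot,\cdot+h_y)$ therefore map $G^{(m)}$ into itself. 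Consequently $D^{\bm\alpha}v$ restricted to $G^{(m)}$ involves only values of $v$ on $G^{(m)}$, and $\|w\|_{\ell_h^2(G^{(m)})}\le \|w\|_{\ell_h^2(G)}$. Since there are finitely many labels $m$ (at most $(k+1)^d$), it suffices to prove, for each fixed $m$,
$$\|v\|_{\ell^\infty(G^{(m)})}\lesssim \sum_{\bm\alpha\in\{0,1\}^d}\|D^{\bm\alpha}v\|_{\ell_h^2(G^{(m)})},$$
and then take the maximum over $m$.

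In 1D, write $v_i$ for the value of $v$ at the $i$-th point of $G^{(m)}$, with $N=1/h$ points on a periodic domain of unit length (the general length only changes constants). Pick any index $i$ and an index $i_0$ at which $|v_{i_0}|$ is minimal. Then $|v_{i_0}|^2\le h\sum_j|v_j|^2=\|v\|_{\ell_h^2}^2$. Telescoping gives $v_i=v_{i_0}+h\sum_{j}D_x^+v_j$ over the indices between $i_0$ and $i$, so by Cauchy--Schwarz $|h\sum D_x^+v_j|\le (hN)^{1/2}\bigl(h\sum|D_x^+v_j|^2\bigr)^{1/2}\lesssim \|D_x^+v\|_{\ell_h^2}$. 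This yields the 1D bound. Equivalently, one can average $|v_i|\le |v_j|+\sum|v_{l+1}-v_l|$ over $j$.

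In 2D, I apply the 1D inequality twice, in the standard anchored way. Fix a point $(i,j)$ of $G^{(m)}$. First apply the 1D bound in $x$ along row $j$:
$$|v_{i,j}|\lesssim \Bigl(h_x\sum_{i'}|v_{i',j}|^2\Bigr)^{1/2}+\Bigl(h_x\sum_{i'}|D_x^+v_{i',j}|^2\Bigr)^{1/2}.$$
Define $a_j:=h_x\sum_{i'}|v_{i',j}|^2$ and $b_j:=h_x\sum_{i'}|D_x^+v_{i',j}|^2$. Next apply the 1D bound in $y$ to the sequences $j\mapsto v_{i',j}$ and $j\mapsto D_x^+v_{i',j}$, in squared form: $\max_j|w_j|^2\lesssim h_y\sum_j(|w_j|^2+|D_y^+w_j|^2)$. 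Summing over $i'$ with weight $h_x$, and using that $D_x^+$ and $D_y^+$ commute, gives
$$\max_j a_j+\max_j b_j\lesssim \sum_{\bm\alpha\in\{0,1\}^2}\|D^{\bm\alpha}v\|_{\ell_h^2(G^{(m)})}^2.$$
Here the weight $|K|=h_xh_y$ matches the $\ell_h^2$ norm. Taking square roots completes the 2D case. Only mixed first differences appear, never second differences in a single direction, which is exactly why the index set is $\bm\alpha\in\{0,1\}^d$.

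The main obstacle is the reduction in the first step. One must verify that each $G^{(m)}$ is genuinely a uniform translated lattice, so that $D^{\bm\alpha}$ is intrinsic to $G^{(m)}$, and that the $\ell_h^2(G)$ norm with the local maximum dominates each $\ell_h^2(G^{(m)})$ norm. The lattice property follows from the translation and scaling invariance of $P_C^\ast$ on uniform meshes, as in \Cref{def:projection}. The norm domination is immediate from the definition. On a quasi-uniform but nonuniform mesh this step would fail, and the lemma indeed assumes uniform periodic meshes. All remaining constants depend only on $k$, $d$ and the domain size.
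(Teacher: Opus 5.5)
Your proposal is correct and matches the paper's proof: you reduce to a fixed local label, use an anchored periodic telescoping argument in 1D, and apply the 1D bound twice in 2D. The only minor difference is that you apply the $y$-direction bound column by column in squared form and then sum over $i'$, whereas the paper applies it to the row norms $A_j, B_j$ using the reverse triangle inequality; both yield the same mixed-difference estimate.
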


\begin{proof}
	It is enough to prove the estimate for a fixed local label \(m\), since the number of labels depends only on \(k\).
	
	In 1D, set \(a_i=v(x_i^m)\). The discrete average gives an index \(i_m\) such that \(|a_{i_m}|\lesssim \|a\|_{\ell_h^2}\). For any \(i\), periodic telescoping and Cauchy's inequality yield
	\[
	|a_i|\le |a_{i_m}|+\sum_q h |D_x^+a_q|
	\lesssim \|a\|_{\ell_h^2}+\|D_x^+a\|_{\ell_h^2}.
	\]
	Taking the maximum over the at most $k+1$ local labels gives the 1D estimate.
	
	In 2D, let \((p,q)\) enumerate the tensor-product root labels. We write
	\(
	a_{ij}^{(p,q)}=v(x_i^{p},y_j^{q})
	\)
	and abbreviate it as \(a_{ij}\) in the proof. For fixed \(j\), the 1D estimate in the \(x\)-direction gives \(\max_i|a_{ij}|\lesssim A_j+B_j\), where
	\[
	A_j=\left(h_x\sum_i |a_{ij}|^2\right)^{1/2},
	\qquad
	B_j=\left(h_x\sum_i |D_x^+a_{ij}|^2\right)^{1/2}.
	\]
	Applying the 1D estimate in the \(y\)-direction to \(A_j\) and \(B_j\), together with the reverse triangle inequality, gives
	\[
	\max_j A_j\lesssim \|a\|_{\ell_h^2}+\|D_y^+a\|_{\ell_h^2},
	\qquad
	\max_j B_j\lesssim \|D_x^+a\|_{\ell_h^2}+\|D_x^+D_y^+a\|_{\ell_h^2}.
	\]
	Therefore,
	\[
	\max_{i,j}|a_{ij}|
	\lesssim
	\|a\|_{\ell_h^2}
	+\|D_x^+a\|_{\ell_h^2}
	+\|D_y^+a\|_{\ell_h^2}
	+\|D_x^+D_y^+a\|_{\ell_h^2}.
	\]
	Taking the maximum over the finitely many local labels proves the result.
\end{proof}
	
	We now obtain the LSZ-projection-based \(\ell^\infty\) pointwise superconvergence theorem.
	
	\begin{theorem}[LSZ-projection-based \(\ell^\infty\) pointwise superconvergence]
		\label[theorem]{thm:base_LSZ_maximum}
		Let \(d\in\{1,2\}\) and assume
		$$
		u\in C^1([0,T];H^{\sigma_k+1+d}(\Omega))
		\cap L^\infty([0,T];W^{k+2+d,\infty}(\Omega)) $$ with 
		$\sigma_k=\min\{2k+1,k+3\}.$ 
		If the semidiscrete CDG method is initialized by the uncorrected projection \(\bm u_h(\cdot,0)=\bm{P}u(\cdot,0)\), then
		\[
		\max_{K\in\Gamma_h^C}\max_{\xi\in G_K}|u(\xi,T)-u_h^C(\xi,T)|\lesssim h^{k+2}.
		\]
	\end{theorem}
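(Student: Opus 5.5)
The plan is to combine the three preceding results directly. We set $v:=u(\cdot,T)-u_h^C(\cdot,T)$, regarded as a function sampled on the set $G$ of superconvergent points, and apply \Cref{lem:discrete_sobolev_roots}:
\[
\|v\|_{\ell^\infty(G)}\lesssim\sum_{\bm\alpha\in\{0,1\}^d}\|D^{\bm\alpha}v\|_{\ell_h^2(G)}.
\]
Each term on the right is then bounded by \Cref{lem:l2diffpoint}. For $\bm\alpha=\bm 0$ this is \Cref{lem:l2point}. For $|\bm\alpha|\ge1$ we use that $D^{\bm\alpha}$ commutes with the CDG operator, the projection $\bm P$ and the corrections on the uniform periodic mesh. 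Hence $D^{\bm\alpha}u_h^C$ is the CDG solution with LSZ initialization for the data $D^{\bm\alpha}u$, and $D^{\bm\alpha}v=D^{\bm\alpha}u(\cdot,T)-(D^{\bm\alpha}u_h)^C(\cdot,T)$. Summing at most $2^d$ terms gives the bound $h^{k+2}$. Since $\|v\|_{\ell^\infty(G)}$ is exactly the left-hand side of the theorem, this concludes the argument.

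Two points need care. The first is that the difference quotients shift by a full cell, $h$ or $h_x,h_y$. A point of $G^{(m)}$ with fixed local label is therefore mapped to the point with the same label in the neighbouring cell. So $D^{\bm\alpha}$ acts on each $G^{(m)}$ separately, which matches the labelwise structure used in the proof of \Cref{lem:discrete_sobolev_roots}. We therefore run the argument label by label, noting that $\|\cdot\|_{\ell_h^2(G^{(m)})}\le\|\cdot\|_{\ell_h^2(G)}$, and take the maximum over the finitely many labels at the end.

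The second is the regularity bookkeeping. \Cref{lem:l2diffpoint} needs $D^{\bm\alpha}u\in C^1([0,T];H^{\sigma_k+1})\cap L^\infty([0,T];W^{k+2,\infty})$ with constants uniform in $h$. This follows from $\|D^{\bm\alpha}w\|_{H^s}\lesssim\|w\|_{H^{s+|\bm\alpha|}}$, and the same bound in $W^{s,\infty}$, both of which come from writing the difference quotient as an average of the derivative. Because $|\bm\alpha|\le d$, the hypotheses $H^{\sigma_k+1+d}$ and $W^{k+2+d,\infty}$ suffice.

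The main obstacle I expect is justifying that the hidden constant in \Cref{lem:l2point}, applied to $D^{\bm\alpha}u$, depends only on these norms of $D^{\bm\alpha}u$. This includes the initial corrected error $\|\hat{\bm\zeta}(0)\|=\|\mathcal C_h D^{\bm\alpha}u(\cdot,0)\|\lesssim h^{k+2}$, which I would check via \Cref{prop:correctestimate} and \Cref{prop:correctestimate2D}. It also includes the Taylor remainder at the roots, which I would control by $\|D^{\bm\alpha}u\|_{W^{k+2,\infty}}$. Once these uniform dependences are confirmed, the theorem follows immediately.
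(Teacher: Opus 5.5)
Your proposal is correct and is essentially the paper's proof. You take \(v=u(\cdot,T)-u_h^C(\cdot,T)\) in the discrete Sobolev inequality at the superconvergent points, bound each \(\|D^{\bm\alpha}v\|_{\ell_h^2(G)}\) by the difference-quotient corollary (which rests on \(D^{\bm\alpha}\) commuting with the CDG operator, \(\bm P\), and the corrections), and your label-wise and regularity/uniform-constant remarks make explicit the bookkeeping the paper leaves implicit.
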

	
	\begin{proof}
		Take \(v=u(\cdot,T)-u_h^C(\cdot,T)\) in \Cref{lem:discrete_sobolev_roots}. By \Cref{lem:l2diffpoint}, \(\|D^{\bm\alpha}v\|_{\ell_h^2(G)}\lesssim h^{k+2}\) for all \(\bm\alpha\in\{0,1\}^d\). Hence
		\[
		\|v\|_{\ell^\infty(G)}
		\lesssim
		\sum_{\bm\alpha\in\{0,1\}^d}\|D^{\bm\alpha}v\|_{\ell_h^2(G)}
		\lesssim h^{k+2},
		\]
		which is the desired estimate.
	\end{proof}

\section{Fully discrete RKCDG superconvergence}\label{sec:RKCDG}

The fully discrete CDG method computes numerical solutions at discrete time levels \( \{t_n\} \) with time step \( \Delta t = t_{n+1} - t_n \). Let \(\langle \cdot,\cdot \rangle\) denote the \(L^2\) inner product on \(\Omega\). Coupled with an \(r\)-th order, \(s\)-stage explicit Runge--Kutta method, the fully discrete CDG scheme is 
\begin{subequations}\label{eq:RKCDG}
	\begin{align}
		\bm{u}_h^{n,0} &= \bm{u}_h^{n}, \\
		\label{def:stagesol}
		\langle \bm{u}_h^{n,l+1}, \bm{\phi} \rangle
		&=
		\sum_{\alpha=0}^{l}
		\left(
		c_{l\alpha} \langle \bm{u}_h^{n,\alpha}, \bm{\phi} \rangle
		+
		\Delta t\, d_{l\alpha} H(\bm{u}_h^{n,\alpha}, \bm{\phi})
		\right),
		\qquad l=0,\dots,s-1.
	\end{align}
\end{subequations}
Here \(\bm{u}_h^{n,l}\in V_h\times W_h\) denotes the CDG stage solution at the \(l\)-th RK stage, \(c_{l\alpha}\) and \(d_{l\alpha}\) are the RK coefficients, and \(H(\cdot,\cdot)\) is the global bilinear form obtained by summing all local spatial operators \(\hat{B}_C\) and \(\tilde{B}_D\).

\begin{proposition}[Stability of the explicit RKCDG method, Theorem 3.5 in \cite{peng2025oscillation}]
	\label[proposition]{prop:stability}
	Under the mesh condition \( \Delta t \lesssim h^\kappa \) with \( \kappa \in [1,2] \), the explicit RKCDG method with source terms satisfies
	\begin{equation*}
		\langle \bm{u}_h^{n,l+1}, \bm{\phi} \rangle
		=
		\sum_{\alpha=0}^{l}
		\left[
		c_{l\alpha} \langle \bm{u}_h^{n,\alpha}, \bm{\phi} \rangle
		+
		\Delta t\, d_{l\alpha}
		\left(
		H(\bm{u}_h^{n,\alpha}, \bm{\phi})
		+
		\langle \bm{g}^{n,\alpha}, \bm{\phi} \rangle
		\right)
		\right],
		\qquad l = 0, \dots, s-1.
	\end{equation*}
	Furthermore, for any integer \(0 \leq \alpha \leq [T/\Delta t]\),
	\begin{equation}\label{eq:stability_estimate}
		\| \bm{u}_h^{n+\alpha} \|^2
		\leq
		(1 + M \Delta t) \| \bm{u}_h^n \|^2
		+
		M\Delta t\sum_{m =0}^{\alpha s-1} \| \bm{g}^{n,m} \|^2.
	\end{equation}
\end{proposition}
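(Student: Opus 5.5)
The plan is to reduce the statement to a one-step energy inequality for the stage-wise scheme and then iterate it. Introduce the discrete operator $L_h:V_h\times W_h\to V_h\times W_h$ by $\langle L_h\bm w,\bm\phi\rangle=H(\bm w,\bm\phi)$ for all $\bm\phi$, and let $\bm g^{n,\alpha}$ also denote its $L^2$-projection onto $V_h\times W_h$. Each stage of the scheme is then the vector identity
\[
\bm u_h^{n,l+1}=\sum_{\alpha=0}^{l}\bigl(c_{l\alpha}\bm u_h^{n,\alpha}+\Delta t\,d_{l\alpha}(L_h\bm u_h^{n,\alpha}+\bm g^{n,\alpha})\bigr).
\]
Two structural facts about $L_h$ come from results stated earlier:
\begin{enumerate}
\item Semi-negativity: \Cref{prop:L2stable}, read as an identity for the bilinear form, gives $\langle L_h\bm w,\bm w\rangle=-\tau_{\max}^{-1}\|w_C-w_D\|^2\le0$.
\item Boundedness: \Cref{lem:weaklybdd} gives $\|L_h\bm w\|\lesssim h^{-1}\|\bm w\|$.
\end{enumerate}
By linearity, the final stage splits as $\bm u_h^{n+1}=R(\Delta tL_h)\bm u_h^n+\Delta t\sum_m Q_m(\Delta tL_h)\bm g^{n,m}$. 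Here $R$ is the stability polynomial of the RK method, with $R(z)=1+z+\dots$, and the $Q_m$ are fixed polynomials.

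\textbf{Source part.} The source contribution is handled by boundedness alone. Since $\Delta t\|L_h\|\lesssim \Delta t\,h^{-1}\lesssim 1$ for $\kappa\ge1$, each $Q_m(\Delta tL_h)$ is uniformly bounded. Young's inequality then gives
\[
\|\bm u_h^{n+1}\|^2\le(1+\Delta t)\|R(\Delta tL_h)\bm u_h^n\|^2+M\Delta t\sum_m\|\bm g^{n,m}\|^2 .
\]

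\textbf{Homogeneous part.} This is the core of the argument: we need $\|R(\Delta tL_h)\bm w\|^2\le(1+M\Delta t)\|\bm w\|^2$. Expand $\|R(\Delta tL_h)\bm w\|^2$ in powers of $\Delta t$ in the Xu--Zhang--Shu energy framework, i.e.\ the matrix-transferring process. This rewrites the expansion as $\|\bm w\|^2$ plus a sum of terms $\Delta t^{j}\langle L_h^{p}\bm w,L_h^{q}\bm w\rangle$. Repeated use of the identity $2\langle L_h\bm v,\bm v\rangle=-2\tau_{\max}^{-1}\|v_C-v_D\|^2$ moves these terms into the form
\[
-\Delta t\sum_j \beta_j\,\tau_{\max}^{-1}\|(L_h^{j}\bm w)_C-(L_h^{j}\bm w)_D\|^2+\gamma\,\Delta t^{\zeta}\|L_h^{\zeta'}\bm w\|^2+\text{(higher-order remainder)},
\]
where $\zeta$ is the termination index.

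The case $\kappa=2$ is easy. All terms beyond $\|\bm w\|^2$ and the nonpositive first-order term are bounded by $(\Delta t\,h^{-1})^2\|\bm w\|^2\lesssim\Delta t\|\bm w\|^2$, which already gives $1+M\Delta t$.

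\textbf{Main obstacle.} The genuinely hard step is $\kappa\in[1,2)$. There the crude bound $(\Delta t/h)^2$ is not $O(\Delta t)$, so the positive leftover terms must be absorbed into the dissipation $\Delta t\,\tau_{\max}^{-1}\|\cdot_C-\cdot_D\|^2$. I would first exploit the CDG-specific structure that the coupling term $\tau_{\max}^{-1}(w_D-w_C)$ is the only stiff part of $L_h$ which is not skew. Concretely, split $L_h=S_h+J_h$, with $S_h$ skew-adjoint (the overlapping-flux part) and $J_h\bm w=\tau_{\max}^{-1}(w_D-w_C,\,w_C-w_D)$ symmetric negative semidefinite with $\langle -J_h\bm w,\bm w\rangle\gtrsim h\|J_h\bm w\|^2$. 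The plan is to bound mixed terms such as $\Delta t^2\langle S_h\bm w,J_h\bm w\rangle$ by $\epsilon\,\Delta t\,h\|J_h\bm w\|^2+C\Delta t^{3}h^{-1}\|S_h\bm w\|^2$ and absorb the first piece into the dissipation. For the remaining pure-$S_h$ terms, use that the RK method has order $r\ge$ the termination level, which gives $|R(i y)|^2\le1+O(|y|^{2\zeta})$. With $\Delta t\lesssim h^\kappa$, these contribute $(\Delta t/h)^{2\zeta}\lesssim\Delta t$ once $\zeta$ is large enough relative to $\kappa$. This is where I would follow the cited \cite[Theorem~3.5]{peng2025oscillation} for the precise bookkeeping.

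\textbf{Iteration.} Finally, iterate the one-step bound over the $\alpha$ consecutive steps. The factors $(1+M\Delta t)^{\alpha}\le e^{MT}$ are uniformly bounded because $\alpha\Delta t\le T$, and collecting the source contributions from all $\alpha s$ stages yields \eqref{eq:stability_estimate}.
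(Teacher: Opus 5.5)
The paper gives no proof of this proposition; it imports it from \cite[Theorem~3.5]{peng2025oscillation}. Your reduction is sound as far as it goes:
\begin{itemize}
\item The splitting $L_h=S_h+J_h$, with $S_h$ skew-adjoint and $J_h$ symmetric with $\langle J_h\bm w,\bm w\rangle=-\tau_{\max}^{-1}\|w_C-w_D\|^2$, is correct for the CDG operator.
\item The Young's-inequality treatment of the source is fine.
\item For $\kappa=2$, every term of order $\Delta t^m$ with $m\ge2$ is indeed $\lesssim(\Delta t/h)^2\|\bm w\|^2\lesssim\Delta t\|\bm w\|^2$.
\end{itemize}

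The gap is the case $\kappa\in[1,2)$, and the estimates you propose there cannot close it. Bounding $\Delta t^2\langle S_h\bm w,J_h\bm w\rangle$ by Young's inequality leaves $C\Delta t^3h^{-1}\|S_h\bm w\|^2\sim(\Delta t/h)^3\|\bm w\|^2$. This is $O(\Delta t)\|\bm w\|^2$ uniformly only if $\Delta t\lesssim h^{3/2}$. The sign-blind bound $|R(iy)|^2\le 1+O(|y|^{2\zeta})$ gives $(\Delta t/h)^{2\zeta}\lesssim\Delta t$ only for $\kappa\ge 2\zeta/(2\zeta-1)>1$. So your route never reaches $\kappa=1$, which is the regime the paper's experiments use ($\Delta t=C_{\mathrm{CFL}}h$).

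In fact no method-independent argument can reach $\kappa=1$. Take forward Euler and set $\lambda=\Delta t/h$, $\theta=h\xi$. The physical low-frequency Fourier mode of the CDG operator has $\Delta t\mu=\lambda\nu(\theta)$ with $\nu(\theta)=-i\theta+O(\theta^3)$ (since $k\ge1$). Hence $|1+\Delta t\mu|^2=1+\lambda^2\theta^2+O(\lambda\theta^3)$. For small fixed $\theta$ this stays bounded away from $1$ uniformly in $h$, so the iterates blow up. The admissible $\kappa$ must therefore depend on the RK method, and the proof has to detect which methods allow $\kappa=1$.

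What is missing is the sign-sensitive part of the Xu--Zhang--Shu analysis. You defer it to the reference as ``precise bookkeeping,'' but it is the entire content of the $\kappa<2$ case. Three steps are needed:
\begin{enumerate}
\item Run the matrix-transferring process with the identity $\langle L_h\bm v,\bm z\rangle+\langle\bm v,L_h\bm z\rangle=2\langle J_h\bm v,\bm z\rangle$ until every cross term has the form $\Delta t^{i+j+1}\langle J_hL_h^i\bm w,L_h^j\bm w\rangle$.
\item Bound these cross terms by Cauchy--Schwarz in the seminorm $\langle -J_h\cdot,\cdot\rangle^{1/2}$, so they are absorbed by the dissipative terms the process produces rather than by inverse inequalities.
\item Use the \emph{sign} of the coefficient of the termination term $\Delta t^{2\zeta}\|L_h^{\zeta}\bm w\|^2$ (your $\gamma$). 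If it is negative (e.g.\ classical third- and fourth-order RK), it supplies the extra damping that gives stability under $\Delta t\lesssim h$. If it is positive, one only obtains $\kappa=2\zeta/(2\zeta-1)$.
\end{enumerate}
Your sketch discards exactly this sign.

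A smaller point: iterating a one-step bound puts $(1+M\Delta t)^\alpha\le e^{MT}$ in front of $\|\bm u_h^n\|^2$, not the factor $1+M\Delta t$ written in \eqref{eq:stability_estimate}. The literal form, uniform in $\alpha\le[T/\Delta t]$, needs something stronger, namely multistep monotonicity of $R(\Delta tL_h)$. The $e^{MT}$ version is, however, all that the Gr\"onwall step in the proof of \Cref{thm:superconvergence_RKCDG} actually uses.
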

The fully discrete analysis is formulated through corrected errors at the RK stages. The stagewise error equation contains the same spatial residual structures as in the semidiscrete analysis, together with the RK local truncation terms. 
\begin{theorem}[Fully discrete corrected-error superconvergence]
	\label[theorem]{thm:superconvergence_RKCDG}
Let $d\in\{1,2\}$, and consider the $\mathbb{Q}^k$ CDG discretization of \eqref{equ:conservation_law} on a uniform Cartesian overlapping mesh in spatial dimension $d$, coupled with an $r$-th order, $s$-stage explicit RK method. Suppose
	$
	u \in C^{r+1}\bigl([0,T]; H^{\sigma_k+1}(\Omega)\bigr)$ with 
	$	\sigma_k := \min\{2k+1,\,k+3\}.
	$ 
	Let $\hat{\bm{\zeta}}^{n,l}$ denote the $\min\{k,2\}$-times corrected stage error. Under the time-step condition $\Delta t \lesssim h^\kappa$ with $\kappa \in [1,2]$, if
	\(
	\|\hat{\bm{\zeta}}^{0,0}\| \lesssim h^{\sigma_k},
	\)
	then
	\[
	\max_{0\le n\le \lceil T/\Delta t\rceil} \|\hat{\bm{\zeta}}^{n,0}\|
	\lesssim h^{\sigma_k} + (\Delta t)^r.
	\]
\end{theorem}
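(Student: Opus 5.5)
The plan is to repeat the semidiscrete corrected-error argument stage by stage, and to treat the RK truncation error as a source term in \Cref{prop:stability}.

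\textbf{Stage references and error equation.} Let $\mathcal{C}_h u:=\sum_{l=1}^{\min\{k,2\}}\bm R^l u$ in 1D and $\sum_{l}(\bm X^l u+\bm\Gamma^l u)$ in 2D. The corrected projection is $\bm\Pi_h u:=\bm P u+\mathcal{C}_h u$, where $\bm P=\bm P^\ast$ or $\bm Q^\ast$. Because the projection and the corrections are linear, time-independent operators that commute with $\partial_t$ and $\partial_x$, we apply $\bm\Pi_h$ to the exact stage values $u^{n,l}$ of the RK method applied to $u_t=-\bm\beta\cdot\nabla u$. These are $u^{n,0}=u(t_n)$ and
\[
u^{n,l+1}=\sum_{\alpha\le l}\bigl(c_{l\alpha}u^{n,\alpha}-\Delta t\,d_{l\alpha}\bm\beta\cdot\nabla u^{n,\alpha}\bigr).
\]
Each $u^{n,\alpha}$ is a linear combination of $(\Delta t)^p(\bm\beta\cdot\nabla)^p u(t_n)$ with $p\le s$. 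The defining property of an $r$-th order scheme gives $u^{n,s}=u(t_{n+1})+\mathcal{O}(\Delta t^{r+1})$ in $H^{\sigma_k+1}$, using $u\in C^{r+1}$ in time. Set $\hat{\bm\zeta}^{n,l}:=\bm u_h^{n,l}-\bm\Pi_h u^{n,l}$. Subtracting the identity satisfied by $\bm\Pi_h u^{n,l}$ from the scheme \eqref{eq:RKCDG} yields the stagewise error equation in the form of \Cref{prop:stability}, with source
\[
\langle \bm g^{n,\alpha},\bm\phi\rangle
=\Bigl(\sum_{\beta\le l}d_{l\beta}\Bigr)^{-1}\text{-weighted combination of }\;
\bigl(\bm\Pi_h(\bm\beta\cdot\nabla u^{n,\alpha}),\bm\phi\bigr)+H(\bm\Pi_h u^{n,\alpha},\bm\phi).
\]
This is exactly the semidiscrete residual $\mathcal C^1+\mathcal C^2+\mathcal C^3$ (respectively $\mathcal C^x+\mathcal C^y+\mathcal C^{xy}$) evaluated at $u^{n,\alpha}$ in place of $u(t)$. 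We should define the source as a functional on the stage-error equation. If some $d_{l\alpha}$ vanish, we absorb the terms as in \cite{peng2025oscillation}, where stage sources are allowed.

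\textbf{Residual bound and time summation.} The semidiscrete proofs of \Cref{thm:supercon1D,thm:supercon2D} show that this residual functional is bounded by $h^{\sigma_k}\|u^{n,\alpha}\|_{H^{\sigma_k+1}}\|\bm\phi\|$. The main inputs are \Cref{prop:testfuncest}, \Cref{prop:correct_testfuncest}, \Cref{prop:correct_testfuncest2D}, \Cref{coro:overlap} and \Cref{lem:weaklybdd}. Since $\Delta t\lesssim h^\kappa$ with $\kappa\ge1$, we have $\|u^{n,\alpha}\|_{H^{\sigma_k+1}}\lesssim\|u\|_{C(H^{\sigma_k+1})}$, and hence $\|\bm g^{n,\alpha}\|\lesssim h^{\sigma_k}$. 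At the last stage, $\bm\Pi_h u^{n,s}$ differs from $\bm\Pi_h u(t_{n+1})$. By \Cref{lem:standard_app} and \Cref{prop:correctestimate} this discrepancy is $\mathcal{O}(\Delta t^{r+1})$ in $L^2$, and it is moved into the final-stage source as a term of size $\Delta t^{r}$ per unit $\Delta t$. Applying \eqref{eq:stability_estimate} step by step gives
\[
\|\hat{\bm\zeta}^{n+1,0}\|^2\le(1+M\Delta t)\|\hat{\bm\zeta}^{n,0}\|^2+M\Delta t\bigl(h^{2\sigma_k}+\Delta t^{2r}\bigr).
\]
A discrete Gr\"onwall argument over $n\le T/\Delta t$ then yields the claim.

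\textbf{Main obstacle.} The delicate step is the truncation term. Naively inserting $u(t_{n+1})-u^{n,s}$ into the scheme would introduce $H(\cdot)$ acting on it, which costs a factor $h^{-1}$. To avoid this, the plan is to define the reference stage values by the exact RK recursion and to carry the truncation error only through the mass term $\langle\cdot,\bm\phi\rangle$ at the end of each step. A related difficulty is that the weighted stage sources of \Cref{prop:stability} require $\bm g$ to appear multiplied by $\Delta t\,d_{l\alpha}$. I would check that the time-derivative terms $\partial_t\bm R^l u$ used in the semidiscrete residual are replaced consistently by $-\bm R^l(\bm\beta\cdot\nabla u^{n,\alpha})$ at every stage, so that the $V_x^0$ component cancels exactly, as in \Cref{def:correctfunc}.
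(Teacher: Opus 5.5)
Your proposal is correct and is essentially the paper's argument. The paper's reference stages differ from your pure RK stages only by the last-stage defect $\Delta t\,\rho^{n,s-1}$, chosen so that $u^{n,s}=u(\cdot,t_{n+1})$, so the $\rho^{n,s-1}$-contribution it carries in $\mathcal{G}^{n,s-1}$, namely $-\langle(\bm P-\sum_{\alpha}\bm R^{\alpha})\rho^{n,s-1},\bm\phi\rangle$, is exactly your end-of-step mismatch divided by $\Delta t$; both proofs then bound the remaining stage sources by the semidiscrete residual estimates and close with \Cref{prop:stability} and a discrete Gr\"onwall argument.

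Two slips need fixing. First, with the paper's $\bm R^l$ the corrected projection is $\bm P u-\mathcal{C}_h u$, so that $\hat{\bm\zeta}=\bm\zeta+\mathcal{C}_h u$ as in the theorem; with your $+$ sign the telescoping built into \Cref{def:correctfunc} fails and the residual is only $\mathcal{O}(h^{k+1})$. Second, the bound on $\|u^{n,\alpha}\|_{H^{\sigma_k+1}}$ comes from time regularity through $(\bm\beta\cdot\nabla)^p u=(-\partial_t)^p u$, not from $\Delta t\lesssim h^\kappa$.
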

In this statement, \(\hat{\bm\zeta}\) denotes the 1D corrected error when \(d=1\) and the 2D directionally corrected error when \(d=2\).
As in \cref{coro:supercon1D,coro:supercon2D}, the cell-average estimates follow directly from the fully discrete corrected-error estimate. 

\begin{corollary}[Fully discrete cell-average superconvergence]
	\label[corollary]{coro:superconvergence_RKCDG}
	Under the assumptions of \cref{thm:superconvergence_RKCDG}, if the scheme is initialized with the corrected projection, then
	\[
	\max_{0 \le n \le \lceil T/\Delta t\rceil} e_{\mathrm{avg}}^{\,n}
	\lesssim h^{\sigma_k}+(\Delta t)^r.
	\]
\end{corollary}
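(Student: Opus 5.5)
The plan mirrors the semidiscrete corollaries, \cref{coro:supercon1D,coro:supercon2D}: first reduce the cell-average error at each time level $t_n$ to the cell averages of the corrected stage error $\hat{\bm\zeta}^{n,0}$, then invoke \cref{thm:superconvergence_RKCDG}. Here the corrected projection initialization means $\bm u_h^0=\bm P^\ast u(\cdot,0)-\bm R^1u(\cdot,0)$ in 1D, or $\bm u_h^0=\bm Q^\ast u(\cdot,0)-\bm X^1u(\cdot,0)-\bm\Gamma^1u(\cdot,0)$ in 2D. Write $\hat\zeta_C^{n,0}=u_h^{C,n}-P_C u(\cdot,t_n)+\mathcal C_h u(\cdot,t_n)$, where $\mathcal C_h u$ is the sum of the correction functions at levels $1,\dots,\min\{k,2\}$, exactly as in the proof of \cref{lem:l2point}.

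\textbf{Step 1 (initial corrected error).} At $n=0$ we have $\hat{\bm\zeta}^{0,0}=-\bm R^1u+\sum_{l=1}^{\min\{k,2\}}\bm R^lu$, which is $\bm 0$ for $k=1$ and $\bm R^2u(\cdot,0)$ for $k\ge2$. By \cref{prop:correctestimate}, resp.\ \cref{prop:correctestimate2D}, its norm is $\lesssim h^{k+3}$, and for $k\ge2$ we have $\sigma_k=k+3$. The 2D case works the same way with $\bm X^2u+\bm\Gamma^2u$. Thus the hypothesis $\|\hat{\bm\zeta}^{0,0}\|\lesssim h^{\sigma_k}$ of \cref{thm:superconvergence_RKCDG} holds, and the theorem gives $\max_n\|\hat{\bm\zeta}^{n,0}\|\lesssim h^{\sigma_k}+(\Delta t)^r$.

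\textbf{Step 2 (cell-average identity).} On each primal cell $K$, $P_C$ (that is, $P_C^\ast$ or $Q_C^\ast=P_C^xP_C^y$) preserves cell averages. Each correction function has zero average over $K$: in 1D this is because $R_C^lu\in V_x^0$; in 2D, $X_C^lu=G^y\chi_C^lu$ has zero $x$-average on every slice, since the Gauss--Lobatto projection in $y$ commutes with $x$-integration, and likewise for $\Gamma_C^lu$. Hence
\[
\int_K\bigl(u(\cdot,t_n)-u_h^{C,n}\bigr)=-\int_K\hat\zeta_C^{n,0}.
\]

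\textbf{Step 3 (conclusion).} Cauchy--Schwarz on each cell gives $|K|^{-1}\bigl|\int_K\hat\zeta_C^{n,0}\bigr|\le|K|^{-1/2}\|\hat\zeta_C^{n,0}\|_K$. Squaring, multiplying by $|K|$ and summing over $K$ yields $e_{\rm avg}^n\le\|\hat{\bm\zeta}^{n,0}\|$. Taking the maximum over $n$ and applying Step 1 completes the argument.

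The only point needing care is the zero-mean property of the 2D corrections after the Gauss--Lobatto tensorization, which I would verify slice by slice. Everything else is bookkeeping.
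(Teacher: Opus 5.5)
Your proposal is correct and follows essentially the same route as the paper, which simply says the corollary follows from \cref{thm:superconvergence_RKCDG} together with the corrected initialization and the cell-average identity of \cref{coro:supercon1D,coro:supercon2D}. Your explicit checks fill in the details the paper leaves implicit: $\hat{\bm\zeta}^{0,0}$ is $\bm 0$ for $k=1$ and equals the level-two correction, of size $O(h^{k+3})=O(h^{\sigma_k})$, for $k\ge2$; the Gauss--Lobatto tensorization preserves the zero directional mean; and Cauchy--Schwarz gives $e_{\mathrm{avg}}^{\,n}\le\|\hat{\bm\zeta}^{n,0}\|$.
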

 The corrected-initialization corollary also gives a reconstruction-based postprocessing result. Let \(\Pi_h^0\) be the \(L^2\)-projection onto primal-mesh piecewise constants. In 1D, define the local reconstruction operator \(S_h\) by primitive interpolation: for \(v\in \Pi_h^0L^2(\Omega)\), let \(\Pi_h\) be the polynomial of degree \(\sigma_k\) that interpolates the discrete primitive
\(
V(x_{i+1/2})=\sum_{j\le i} h\,(\Pi_h^0v)|_{I_j}
\)
at \(\sigma_k+1\) neighboring cell boundaries, and set \(S_hv:=\Pi_h'\). In 2D, \(S_h\) is defined by tensor products of the corresponding 1D reconstructions. Then \(S_h\) is a local bounded linear operator, preserves cell averages, and reproduces polynomials of degree at most \(\sigma_k-1\).

	\subsection{Proof of the fully discrete superconvergence results}
	This subsection proves the fully discrete 1D and 2D RKCDG result including the reference stage solutions, stagewise corrected errors, and stagewise residual estimates. The LSZ-projection-based $\ell^\infty$ pointwise superconvergence theorem is proved with two key ingredients: $\ell_h^2$ difference quotient estimate and a discrete Sobolev inequality. In the fully discrete setting, it is important to show the same asymptotic weak-cancellation and HOCA mechanisms remain valid for stage solutions.

	\subsubsection*{Reference functions:}
	
	Define the reference function \(\bm{U}^{n,l}(\bm{x}) := [u^{n,l}(\bm{x}), u^{n,l}(\bm{x})]^T\), where \(u^{n,l}(\bm{x})\) satisfies
	\begin{align}
		u^{n,0} &= u(\bm{x}, t_n), \nonumber \\
		u^{n,l+1}
		&=
		\sum_{\alpha=0}^{l}
		\left(
		c_{l\alpha} u^{n,\alpha}
		-\Delta t\, d_{l\alpha} \bm{\beta} \cdot \nabla u^{n,\alpha}
		\right)
		+
		\Delta t \rho^{n,l},
		\qquad l = 0, \dots, s-1,
		\label{eq:reference_u}
	\end{align}
	with \(\rho^{n,l} = 0\) for \(l < s-1\) and \(\rho^{n,s-1}\) chosen so that \(u^{n,s} = u(\bm{x}, t_{n+1})\). By the RK order conditions,
	$
	\|\rho^{n,s-1}\| \lesssim (\Delta t)^r.
	$ 
	Let \(\bm{\rho}^{n,l} := [\rho^{n,l}, \rho^{n,l}]^T\). Since \(u^{n,l}\) is smooth,
	\[
	H(\bm{U}^{n,l}, \bm{\phi}) = - \langle \bm{\nabla}_{\bm{\beta}} \bm{U}^{n,l}, \bm{\phi} \rangle,
	\qquad
	\bm{\nabla}_{\bm{\beta}} \bm{U}^{n,l}
	=
	[\bm{\beta}\cdot\nabla u^{n,l}, \bm{\beta}\cdot\nabla u^{n,l}]^T,
	\]
	and therefore
	\begin{equation}\label{def:refsol}
		\langle \bm{U}^{n,l+1}, \bm{\phi} \rangle
		=
		\sum_{\alpha=0}^{l}
		\left(
		c_{l\alpha} \langle \bm{U}^{n,\alpha}, \bm{\phi} \rangle
		+
		\Delta t\, d_{l\alpha} H(\bm{U}^{n,\alpha}, \bm{\phi})
		\right)
		+
		\Delta t \langle \bm{\rho}^{n,l}, \bm{\phi} \rangle.
	\end{equation}
	
	For each RK stage, define
	\begin{equation}\label{equ:stageerror}
		\hat{\bm{\zeta}}^{n,l}
		=
		\bm{\zeta}^{n,l} + \sum_{\alpha=1}^{\min\{k,2\}} \bm{R}^\alpha u^{n,l},
		\qquad
		\hat{\bm{\eta}}^{n,l}
		=
		\bm{\eta}^{n,l} + \sum_{\alpha=1}^{\min\{k,2\}} \bm{R}^\alpha u^{n,l},
	\end{equation}
	where
	$
	\bm{\zeta}^{n,l} = \bm{u}_h^{n,l}-\bm{P}u^{n,l}$ and 
	$
	\bm{\eta}^{n,l} = \bm{U}^{n,l}-\bm{P}u^{n,l}.
	$

	\subsection{Fully discrete correction functions}
	
	At each RK stage, use the correction functions from the main text with \(u\) replaced by the reference stage solution \(u^{n,l}\). Define the global correction operator in 1D by
	\[
	\tilde{H}(\bm{R}^{\alpha} w, \bm{\phi})
	:=
	\sum_i
	\left(
	\widehat{P}_i(R_C^{\alpha} w, \phi_1)
	+
	\widetilde{P}_{i+1/2}(R_D^{\alpha} w, \phi_2)
	\right),
	\]
	and in 2D by
	$
	\tilde{H}(\bm{R}^{\alpha} w, \bm{\phi})
	:=
	\widetilde{H}_x(\bm{X}^{\alpha}w, \bm{\phi})
	+
	\widetilde{H}_y(\bm{\Gamma}^{\alpha}w, \bm{\phi})$ with $
	\bm{R}^{\alpha}w=\bm{X}^{\alpha}w+\bm{\Gamma}^{\alpha}w,
	$
	where
	\begin{align*}
		\widetilde{H}_x(\bm{w}, \bm{\phi})
		&	=
		\sum_{i,j}\int_{J_j}
		\left(
		\widehat{P}_{i}^x(w_C, \phi_1)
		+
		\widetilde{P}_{i+1/2}^x(w_D, \phi_2)
		\right)\mathrm{d}y, \\
		\widetilde{H}_y(\bm{w}, \bm{\phi})&	=
		\sum_{i,j}\int_{I_i}
		\left(
		\widehat{P}_{j}^y(w_C, \phi_1)
		+
		\widetilde{P}_{j+1/2}^y(w_D, \phi_2)
		\right)\mathrm{d}x.
	\end{align*}
	
	\subsection{Proof of \Cref{thm:superconvergence_RKCDG}: stagewise corrected-error estimate}
	
	Subtracting \eqref{def:refsol} from \eqref{def:stagesol} and including the correction functions, we obtain
	\begin{equation}\label{def:errorequ1}
		\langle \hat{\bm{\zeta}}^{n,l+1}, \bm{\phi} \rangle
		=
		\sum_{\alpha=0}^{l}
		\big(
		c_{l\alpha} \langle \hat{\bm{\zeta}}^{n,\alpha}, \bm{\phi} \rangle
		+
		\Delta t\, d_{l\alpha} H(\hat{\bm{\zeta}}^{n,\alpha}, \bm{\phi})
		\big)
		+
		\Delta t \mathcal{G}^{n,l}(\bm{\phi}),
	\end{equation}
	where
	$
	\mathcal{G}^{n,l}(\bm{\phi})
	=
	\langle \hat{\bm{\eta}}_c^{n,l}, \bm{\phi} \rangle
	-
	H(\hat{\bm{\eta}}_d^{n,l}, \bm{\phi})
	-
	\langle \bm{\rho}^{n,l}, \bm{\phi} \rangle.
	$
	
	For any stage function \(\bm{w}^{n,l}\), following \cite{xu_meng_shu_zhang_2020}, define
	$$
	\bm{w}_c^{n,l}
	=
	\frac{1}{\Delta t}
	\left(
	\bm{w}^{n,l+1} - \sum_{\alpha=0}^l c_{l\alpha} \bm{w}^{n,\alpha}
	\right) \quad \mbox{and} \quad 
	\bm{w}_d^{n,l}
	=
	\sum_{\alpha=0}^l d_{l\alpha} \bm{w}^{n,\alpha}.
	$$ 
	This notation is unrelated to the upper-case subscripts \(C,D\) used for primal and dual meshes.
	
	To apply \Cref{prop:stability}, we rewrite \eqref{def:errorequ1} as
	\begin{equation}\label{def:errorequ2}
		\langle \hat{\bm{\zeta}}^{n,l+1}, \bm{\phi} \rangle
		=
		\sum_{\alpha=0}^l
		\big[
		c_{l\alpha} \langle \hat{\bm{\zeta}}^{n,\alpha}, \bm{\phi} \rangle
		+
		\Delta t\, d_{l\alpha}
		\big(
		H(\hat{\bm{\zeta}}^{n,\alpha}, \bm{\phi})
		+
		\bm{G}^{n,\alpha}(\bm{\phi})
		\big)
		\big],
	\end{equation}
	where the source term \(\bm{G}^{n,l}(\bm{\phi})\) is defined recursively by 
	$$
	d_{ll}\,\bm{G}^{n,l}(\bm{\phi})
	=
	\mathcal{G}^{n,l}(\bm{\phi})
	-
	\sum_{\alpha=0}^{l-1} d_{l\alpha}\,\bm{G}^{n,\alpha}(\bm{\phi}),
	0 \le l \le s-1.
	$$ 
	Hence
	\[
	\|\bm{G}^{n,l}\|
	\lesssim
	\sum_{0 \le \alpha\le l} \|\mathcal{G}^{n,\alpha}\|,
	\qquad
	\|\mathcal{G}^{n,\alpha}\|
	:=
	\sup_{\bm{\phi} \in V_h \times W_h,\ \bm{\phi} \ne 0}
	\frac{|\mathcal{G}^{n,\alpha}(\bm{\phi})|}{\|\bm{\phi}\|}.
	\]
	
	Since \(\bm{P}\) and \(\bm{R}\) are linear,
	\begin{equation*}
		\resizebox{0.99\hsize}{!}{$
			\mathcal{G}^{n,l}(\bm{\phi})
			=
			\left\langle
			\left(I - \bm{P} + \sum_{\alpha=1}^{\min\{k,2\}} \bm{R}^\alpha\right) u_c^{n,l},
			\bm{\phi}
			\right\rangle
			-
			H\left(
			\left(I - \bm{P} + \sum_{\alpha=1}^{\min\{k,2\}} \bm{R}^\alpha\right) u_d^{n,l},
			\bm{\phi}
			\right)
			-
			\langle \bm{\rho}^{n,l}, \bm{\phi} \rangle.$}
	\end{equation*}
	We split
	$
	\mathcal{G}^{n,l}(\bm{\phi}) = \mathcal{I}_1^{n,l}(\bm{\phi}) + \mathcal{I}_2^{n,l}(\bm{\phi}),
	$ 
	where
	\begin{subequations}
		\begin{align}
			\label{equ:fd1_a}
			\mathcal{I}_1^{n,l}(\bm{\phi})
			&=
			\left\langle -\bm{R}^{\min\{k,2\}} u_c^{n,l}, \bm{\phi} \right\rangle, \\
			\label{equ:fd1_b}
			\mathcal{I}_2^{n,l}(\bm{\phi})
			&=
			\sum_{\alpha=1}^{\min\{k,2\}}
			\left[
			-\left\langle \bm{R}^{\alpha-1} u_c^{n,l}, \bm{\phi} \right\rangle
			+
			(H - \tilde{H})\big( \bm{R}^{\alpha-1} u_d^{n,l}, \bm{\phi} \big)
			+
			\tilde{H}(\bm{R}^{\alpha} u_d^{n,l}, \bm{\phi})
			\right].
		\end{align}
	\end{subequations}
	
	\subsubsection*{Estimate of \(\boldsymbol{|\mathcal{I}_1^{n,l}(\bm{\phi})|}\):}
	
	Applying \Cref{prop:correctestimate,prop:correctestimate2D} to \(u_c^{n,l}\), we obtain
	\[
	|\mathcal{I}_1^{n,l}(\bm{\phi})|
	\lesssim
	h^{\sigma_k} \|u_c^{n,l}\|_{H^{\sigma_k}} \|\bm{\phi}\|
	\lesssim
	h^{\sigma_k} \|\bm{\phi}\|.
	\]
	
	\subsubsection*{Estimate of \(\boldsymbol{|\mathcal{I}_2^{n,l}(\bm{\phi})|}\):}
	
	We present the 2D case; the 1D case is recovered by keeping only the \(x\)-direction terms and replacing the inner product and spatial operator by their 1D counterparts from the semidiscrete analysis. 
	\begin{align}
		\mathcal{I}_2^{n,l}(\bm{\phi})
		&=
		\mathcal{I}_x^{n,l}(\bm{\phi})
		+
		\mathcal{I}_y^{n,l}(\bm{\phi})
		+
		\mathcal{I}_{xy}^{n,l}(\bm{\phi}), \nonumber \\
		\mathcal{I}_x^{n,l}(\bm{\phi})
		&=
		\sum_{\alpha=1}^{\min\{k,2\}}
		\Big[
		\langle \bm{X}^{\alpha-1} (\bm{\beta}\cdot\nabla u_d^{n,l}), \bm{\phi} \rangle
		+
		(H - \tilde{H}_x)\big( \bm{X}^{\alpha-1} u_d^{n,l}, \bm{\phi} \big)
		+
		\tilde{H}_x\big(\bm{X}^{\alpha} u_d^{n,l}, \bm{\phi}\big)
		\Big], \label{eq:Ix-decomp} \\
		\mathcal{I}_y^{n,l}(\bm{\phi})
		&=
		\sum_{\alpha=1}^{\min\{k,2\}}
		\Big[
		\langle \bm{\Gamma}^{\alpha-1} (\bm{\beta}\cdot\nabla u_d^{n,l}), \bm{\phi} \rangle
		+
		(H - \tilde{H}_y)\big( \bm{\Gamma}^{\alpha-1} u_d^{n,l}, \bm{\phi} \big)
		+
		\tilde{H}_y\big(\bm{\Gamma}^{\alpha} u_d^{n,l}, \bm{\phi}\big)
		\Big], \notag \\
		\mathcal{I}_{xy}^{n,l}(\bm{\phi})
		&=
		\sum_{\alpha=1}^{\min\{k,2\}}
		\Big[
		-\langle \bm{R}^{\alpha}\big( u_c^{n,l} + \bm{\beta}\cdot\nabla u_d^{n,l} \big), \bm{\phi} \rangle
		\Big]
		+
		H\big( \tilde{\bm{\eta}}_d^{n,l}, \bm{\phi} \big). \notag
	\end{align}
	
	We first estimate the \(x\)-direction terms. From the semidiscrete analysis,
	\begin{equation}\label{eq:x-block-bound}
		\resizebox{0.99\hsize}{!}{$
			\left|
			\sum_{\alpha = 1}^{\min\{k,2\}}
			\Big[
			\langle \bm{X}^{\alpha-1} (\bm{\beta}\cdot\nabla u_d^{n,l}), \bm{\phi} \rangle
			+
			(H - \tilde{H}_x)\big( \bm{X}^{\alpha-1}u_d^{n,l}, \bm{\phi} \big)
			+
			\tilde{H}_x\big(\bm{X}^{\alpha} u_d^{n,l}, \bm{\phi}\big)
			\Big]
			\right|
			\lesssim
			h^{\sigma_k} \|\bm{\phi}\|.$}
	\end{equation}
	Therefore, by \eqref{eq:Ix-decomp},
	\[
	\begin{aligned}
		\big|\mathcal{I}_x^{n,l}(\bm{\phi})\big|
		&\lesssim
		h^{\sigma_k} \|\bm{\phi}\|
		+
		\left|
		\sum_{\alpha=0}^{\min\{k,2\}-1}
		\Big\langle
		\bm{X}^{\alpha}\big( u_c^{n,l} + \bm{\beta}\cdot\nabla u_d^{n,l}\big),
		\bm{\phi}
		\Big\rangle
		\right| \lesssim
		h^{\sigma_k} \|\bm{\phi}\|
		+
		\|\bm{\rho}^{n,l}\| \|\bm{\phi}\|,
	\end{aligned}
	\]
	where the last step uses \eqref{eq:reference_u}. Hence
	$
	\big|\mathcal{I}_x^{n,l}(\bm{\phi})\big|
	\lesssim
	\bigl((\Delta t)^r + h^{\sigma_k}\bigr)\|\bm{\phi}\|.
	$ 
	The estimate for \(\mathcal{I}_y^{n,l}\) is identical.
	
	For the mixed term,
	$$
	\big|\mathcal{I}_{xy}^{n,l}(\bm{\phi})\big|
	\lesssim
	\|\bm{\rho}^{n,l}\| \|\bm{\phi}\|
	+
	h^{-1} \big\| (I - \bm{P}^x)\big[(I - \bm{P}^y) u_d^{n,l}\big] \big\| \|\bm{\phi}\|
	\lesssim
	\bigl(h^{\sigma_k} + (\Delta t)^r\bigr)\|\bm{\phi}\|.
	$$
	
	Combining the above bounds for \(\mathcal{G}^{n,l}\) with the recursion \eqref{def:errorequ2}, and then applying \Cref{prop:stability}, we obtain
	$
	\|\hat{\bm{\zeta}}^{n,l}\|
	\lesssim
	\|\hat{\bm{\zeta}}^{0}\|
	+
	h^{\sigma_k}
	+
	(\Delta t)^r.
	$
	Choosing the initial value so that
	$
	\|\hat{\bm{\zeta}}^0\| \lesssim h^{\sigma_k},
	$ 
	and applying the discrete Gr\"onwall inequality completes the proof of \Cref{thm:superconvergence_RKCDG}. The corollary follows from the same corrected-projection initialization used in the semidiscrete case.

	\subsection{Reconstruction-based postprocessing}
\begin{lemma}[Stable reconstruction-based postprocessing]
	\label[lemma]{lem:postprocessing}
	Under the hypotheses of \Cref{coro:superconvergence_RKCDG}, the postprocessed fully discrete RKCDG approximation via reconstruction satisfies
	\[
	\max_{0 \le n \le \lceil T/\Delta t\rceil}
	\bigl\|u(\cdot,t^n)-S_h\Pi_h^0 u_h^{C,n}\bigr\|
	\lesssim
	h^{\sigma_k}+(\Delta t)^r.
	\]
\end{lemma}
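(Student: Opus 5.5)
The natural route is a triangle inequality through the reconstruction of the exact cell averages. We write
\[
u(\cdot,t^n)-S_h\Pi_h^0 u_h^{C,n}
=\bigl(u(\cdot,t^n)-S_h\Pi_h^0 u(\cdot,t^n)\bigr)+S_h\Pi_h^0\bigl(u(\cdot,t^n)-u_h^{C,n}\bigr).
\]
The first term is a pure approximation error. The second term is the stable image of the cell-average error, which is controlled by \Cref{coro:superconvergence_RKCDG}.

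For the approximation term, we use the stated properties of $S_h$: it is local, bounded, and reproduces polynomials of degree at most $\sigma_k-1$. On each cell $K$, let $p$ be a Taylor or averaged Taylor polynomial of degree $\sigma_k-1$ of $u(\cdot,t^n)$ on the reconstruction stencil $\omega_K$, which is a union of $\mathcal O(1)$ neighboring cells. Since $\Pi_h^0$ also preserves the cell averages of $p$, we have $S_h\Pi_h^0 p=p$ on $K$. Local boundedness gives
\[
\|u-S_h\Pi_h^0u\|_K\le\|u-p\|_K+\|S_h\Pi_h^0(u-p)\|_K\lesssim\|u-p\|_{\omega_K}\lesssim h^{\sigma_k}|u|_{H^{\sigma_k}(\omega_K)}.
\]
Summing over $K$ uses the bounded overlap of the stencils. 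This gives $\|u-S_h\Pi_h^0u\|\lesssim h^{\sigma_k}\|u(\cdot,t^n)\|_{H^{\sigma_k}}$, uniformly in $n$ by the regularity $u\in C^{r+1}([0,T];H^{\sigma_k+1})$. In 2D the same argument applies to the tensor-product $S_h$, which reproduces $\mathbb Q^{\sigma_k-1}\supset\mathbb P^{\sigma_k-1}$.

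For the stable term, the key fact is that $\|S_h w\|\lesssim\|w\|$ for piecewise constant $w$, with a constant independent of $h$. This holds because $S_h$ is translation-invariant on a uniform mesh and its coefficients are fixed numbers acting on $\mathcal O(1)$ neighboring cell values. We then note that $\|\Pi_h^0(u-u_h^{C,n})\|$ is exactly $e_{\mathrm{avg}}^n$ as defined in \Cref{coro:supercon1D} and \Cref{coro:supercon2D}. Hence
\[
\|S_h\Pi_h^0(u-u_h^{C,n})\|\lesssim e_{\mathrm{avg}}^n\lesssim h^{\sigma_k}+(\Delta t)^r
\]
by \Cref{coro:superconvergence_RKCDG}. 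Taking the maximum over $n$ completes the estimate.

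The main obstacle is justifying the $h$-uniform $L^2$ stability of $S_h$ on piecewise constants, since the primitive-interpolation definition involves dividing differences by $h$. We would verify this on the reference stencil by a scaling argument. Writing $V$ as a cumulative sum, $\Pi_h'$ at a point depends only on the differences $V(x_{i+1/2})-V(x_{i-1/2})=h\bar v_i$ within the stencil, because constant shifts of $V$ are annihilated by differentiation. The interpolant's derivative is therefore a fixed linear combination of the $\bar v_i$ times reference basis functions, and the factor $h$ cancels the $h^{-1}$ from differentiation. A local $L^2$ bound on each cell then follows. Summing with bounded overlap gives the global bound. The 2D case follows by tensorization.
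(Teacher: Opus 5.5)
Your proposal is correct and follows essentially the same route as the paper. You split through $S_h\Pi_h^0 u(\cdot,t^n)$, bound the approximation term by $h^{\sigma_k}\|u(\cdot,t^n)\|_{H^{\sigma_k}}$ using polynomial reproduction of $S_h$, and use the $L^2$-stability of $S_h$ to reduce the second term to $\|\Pi_h^0(u-u_h^{C,n})\| = e_{\mathrm{avg}}^{\,n}$, which \Cref{coro:superconvergence_RKCDG} controls. The only addition is that you justify the local Bramble--Hilbert bound and the $h$-uniform stability of $S_h$ (the factor $h$ in the primitive differences cancels the $h^{-1}$ from differentiating the interpolant), whereas the paper simply asserts these as properties of $S_h$.
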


\begin{proof}
	Let $u^n = u(\cdot, t^n)$. By the triangle inequality, the polynomial preservation of $S_h$, and its $L^2$-stability, we have
	\begin{align*}
		\|u^n - S_h\Pi_h^0 u_h^{C,n}\| &\le \|u^n - S_h\Pi_h^0 u^n\| + \|S_h\Pi_h^0 (u^n - u_h^{C,n})\| \\
		&\lesssim h^{\sigma_k}\|u^n\|_{H^{\sigma_k}(\Omega)} + \|\Pi_h^0 (u^n - u_h^{C,n})\| 
		\lesssim h^{\sigma_k} + e_{\mathrm{avg}}^{\,n}.
	\end{align*}
	Substituting $e_{\mathrm{avg}}^{\,n} \lesssim h^{\sigma_k} + (\Delta t)^r$ from \Cref{coro:superconvergence_RKCDG} completes the proof.
\end{proof}

As for the LSZ projection initialization, the initial corrected error is only of order \(h^{k+2}\). The fully discrete error equation and the corresponding discrete Gr\"onwall argument therefore yield the \(k+2\) order $\ell_h^2$ pointwise estimate, together with the same estimate for the difference quotients, up to the temporal truncation error
\[
\max_n
\sum_{\bm\alpha\in\{0,1\}^d}
\|D^{\bm\alpha}(u(\cdot,t^n)-u_h^{C,n})\|_{\ell_h^2(G)}
\lesssim h^{k+2}+(\Delta t)^r .
\]
Applying \Cref{lem:discrete_sobolev_roots} gives the desired fully discrete \(\ell^\infty\) pointwise estimate.
\begin{lemma}[Fully discrete LSZ-projection-based $\ell^\infty$ point estimate]\label[lemma]{coro:base_LSZ_RK}
Under the assumptions of \Cref{thm:base_LSZ_maximum}, with the additional time regularity required in \Cref{thm:superconvergence_RKCDG}, and under the RKCDG stability condition, suppose the fully discrete RKCDG method is initialized by the uncorrected projection
\(
\bm u_h^{0,0}=\bm{P}u(\cdot,0).
\)
Then
\[
\max_{0\le n\le \lceil T/\Delta t\rceil}
\max_{\xi\in G}
|u(\xi,t^n)-u_{h}^{C,n}(\xi)|
\lesssim h^{k+2}+(\Delta t)^r.
\]
\end{lemma}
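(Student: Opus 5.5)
The fully discrete argument mirrors the semidiscrete proof of \Cref{thm:base_LSZ_maximum}. The plan is to prove a stagewise $\ell_h^2$ pointwise estimate for the error and its difference quotients, and then upgrade it with \Cref{lem:discrete_sobolev_roots}.

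\textbf{Step 1 (reduction at time $t^n$).} I fix $n$ and write
\[
u(\cdot,t^n)-u_h^{C,n}=\bigl(u^{n,0}-P_Cu^{n,0}\bigr)-\zeta_C^{n,0},
\qquad \zeta_C^{n,0}=\hat\zeta_C^{n,0}-\mathcal C_h u^{n,0}.
\]
Exactly as in the proof of \Cref{lem:l2point}, the first term is bounded at the points of $G$ by $h^{k+2}$. In 1D this comes from the Taylor polynomial of degree $k+1$, the vanishing of $(I-P_C^\ast)(x-x_i)^{k+1}$ on $G_i$, and \Cref{lem:linfproj}. In 2D one uses in addition the splitting $I-Q_C^\ast=(I-P_C^{x,\ast})+P_C^{x,\ast}(I-P_C^{y,\ast})$. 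The correction part $\mathcal C_hu^{n,0}$ is of order $h^{k+2}$ in $L^2$ by \Cref{prop:correctestimate,prop:correctestimate2D}. The inverse inequality on the finitely many local points then converts every $L^2$ bound on a cell into an $\ell_h^2(G)$ bound without loss.

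\textbf{Step 2 (corrected error with weak initialization).} I rerun the proof of \Cref{thm:superconvergence_RKCDG} with the LSZ initialization $\bm u_h^{0,0}=\bm Pu(\cdot,0)$. Then $\hat{\bm\zeta}^{0,0}=\sum_{l\ge1}\bm R^lu(\cdot,0)$, and this is only $\mathcal O(h^{k+2})$. The stagewise source $\mathcal G^{n,l}$ is still bounded by $h^{\sigma_k}+(\Delta t)^r$, and \Cref{prop:stability} together with the discrete Gr\"onwall inequality gives
\[
\max_n\|\hat{\bm\zeta}^{n,0}\|\lesssim h^{k+2}+h^{\sigma_k}+(\Delta t)^r\lesssim h^{k+2}+(\Delta t)^r,
\]
because $\sigma_k\ge k+2$. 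Combining this with Step 1 yields
\[
\|u(\cdot,t^n)-u_h^{C,n}\|_{\ell_h^2(G)}\lesssim h^{k+2}+(\Delta t)^r .
\]

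\textbf{Step 3 (difference quotients and conclusion).} On a uniform periodic mesh the shift operators commute with the local operators $\hat B,\tilde B$, with $\bm P$, and with the corrections. They also commute with the RK recursion \eqref{eq:RKCDG}, since the coefficients $c_{l\alpha},d_{l\alpha}$ are scalars. Hence $D^{\bm\alpha}\bm u_h^{n}$ is exactly the RKCDG solution for the data $D^{\bm\alpha}u$ under LSZ initialization. Applying Step 2 to $D^{\bm\alpha}u$ is legitimate because the extra $|\bm\alpha|\le d$ derivatives are available by the regularity assumptions. This gives the $\ell_h^2(G)$ bound for every $D^{\bm\alpha}(u(\cdot,t^n)-u_h^{C,n})$. \Cref{lem:discrete_sobolev_roots} then yields the $\ell^\infty(G)$ bound uniformly in $n$.

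\textbf{Main obstacle.} The delicate point is uniformity of constants in Step 3. The truncation residual $\rho^{n,s-1}$ and the bounds on $u_c^{n,l}$, $u_d^{n,l}$ for $D^{\bm\alpha}u$ must be controlled by norms of $u$ with $|\bm\alpha|$ extra spatial derivatives. For this I would use $\|D^{\bm\alpha}w\|_{H^m}\lesssim\|w\|_{H^{m+|\bm\alpha|}}$ and the assumed $C^{r+1}$ time regularity. This ensures that the $(\Delta t)^r$ term and the Gr\"onwall constant do not depend on $h$.
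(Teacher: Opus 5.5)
Your proposal is correct and follows essentially the same route as the paper. You rerun the proof of \Cref{thm:superconvergence_RKCDG} with the weaker initial bound $\|\hat{\bm\zeta}^{0,0}\|\lesssim h^{k+2}$, pass to $\ell_h^2(G)$ via the projection-root estimate, correction decomposition and sampling argument of \Cref{lem:l2point}, repeat this for $D^{\bm\alpha}u$, and conclude with \Cref{lem:discrete_sobolev_roots}; your explicit check that shifts commute with the RK stage recursion and your bookkeeping of the extra derivatives needed for $D^{\bm\alpha}u$ spell out what the paper leaves implicit.
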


	\begin{proof}
		The proof of \Cref{thm:superconvergence_RKCDG} actually implies 
	$$
	\max_{0\le n\le N}\|\hat{\bm\zeta}^{n,0}\|
	\lesssim
	\|\hat{\bm\zeta}^{0,0}\|+h^{\sigma_k}+(\Delta t)^r.
	$$
	Under the uncorrected LSZ projection initialization,
	$
	\|\hat{\bm\zeta}^{0,0}\|\lesssim h^{k+2},
	$ 
	and hence 
	$$
	\max_{0\le n\le N}\|\hat{\bm\zeta}^{n,0}\|
	\lesssim h^{k+2}+(\Delta t)^r.
	$$
	The same projection-root estimate, correction decomposition, and sampling
	argument used in the proof of \Cref{lem:l2point}, applied also to
	\(D^{\bm\alpha}u\), yield
	\[
	\max_n\sum_{\bm\alpha\in\{0,1\}^d}
	\|D^{\bm\alpha}(u(\cdot,t^n)-u_h^{C,n})\|_{\ell_h^2(G)}
	\lesssim h^{k+2}+(\Delta t)^r.
	\]
	Finally, \Cref{lem:discrete_sobolev_roots} gives \Cref{coro:base_LSZ_RK}.
\end{proof}

\section{Numerical examples}\label{sec:Numtest}

	This section verifies the corrected-initialization consequences of the theory, namely the cell-average superconvergence estimates and the reconstruction-based postprocessing estimate. Using the LSZ projection for initialization, the \(\ell^\infty\)-norm pointwise theorem in \Cref{thm:base_LSZ_maximum} explains the LSZ pointwise superconvergent phenomenon under the uncorrected projection documented in \cite{liu2018optimal}; the experiments below focus on the stronger cell-average and postprocessing consequences that require corrected initialization. The tables are therefore intended to test the new corrected-initialization consequences rather than to repeat the LSZ pointwise tests. We present 1D and 2D experiments for the \(\mathbb Q^k\) CDG method applied to the linear advection equation \eqref{equ:conservation_law}. The initial conditions are smooth and satisfy the regularity assumptions of the analysis. To match the corrected-error variables used in the theory, the initial data are taken as \(\bm P^\ast u_0-\bm R^1u_0\) in 1D and as \(\bm Q^\ast u_0-\bm X^1u_0-\bm\Gamma^1u_0\) in 2D. Time integration uses the seventh-order linear strong-stability-preserving Runge--Kutta scheme \cite{gottlieb2005high}.

The seventh-order SSP Runge--Kutta scheme is used only to reduce the temporal
error in the spatial convergence tests. Although CDG schemes can admit larger
CFL numbers than standard DG methods \cite{liu2008l2}, the explicit CFL
constants reported in \cite{liu2008l2} are given only up to fourth-order
Runge--Kutta methods. Hence we use the conservative choice 
$
C_{\mathrm{CFL}}=\frac{1}{2k+1}
$ 
for all tests with the seventh-order SSP RK scheme. In 1D, we set \(\Delta t=C_{\mathrm{CFL}}h\). In 2D, we take
\(
\Delta t=\frac{C_{\mathrm{CFL}}}{h_x^{-1}+h_y^{-1}}
=C_{\mathrm{CFL}}\frac{h_xh_y}{h_x+h_y}.
\)
All computations are implemented in \texttt{C/C++} in double precision.

\subsection{1D example}

We consider
$
u_0(x) = \sin(2\pi x)
$ 
on the domain \(\Omega=[0,1]\) with periodic boundary conditions. Uniform meshes with \(N_x\) cells are used.
Table~\ref{tab:1D} reports the cell-average error $e_{\mathrm{avg}}$ and the postprocessing error $\|u(\cdot,T)-S_h\Pi_h^0 u_h^{C,n}\|$ at time $t=1.1$, using $\mathbb{Q}^k$ CDG elements on a uniform mesh with $N_x$ cells. Both errors converge with order about $3$, $5$, and $6$ for $k=1,2,3$, respectively, confirming the theoretical rate $\min\{2k+1, k+3\}$. In particular, for $k \ge 2$, we clearly observe the improved $(k+3)$-th order convergence when the initial data are corrected once.

\begin{table}[!htb]
	\centering
	\belowrulesep=0pt
	\aboverulesep=0pt
	\caption{Errors and convergence rates for the 1D $\mathbb{Q}^k$-based CDG method with $N_x$ cells.}
	\label{tab:1D}
	\setlength{\tabcolsep}{3mm}{
		\begin{tabular}{c|cccccc}
			\toprule[1.2pt]
			\multirow{2}{*}{} &
			\multirow{2}{*}{$N_x$} &
			\multicolumn{2}{c}{$e_{\text{avg}}$}&	\multicolumn{2}{c}{$\|u(\cdot,T)-S_h\Pi_h^0 u_h^{C,n}\|$} \\
			\cmidrule(r){3-4} \cmidrule(l){5-6}
			& & value & order & value & order \\
			\midrule[1.5pt]
			\multirow{5}{*}{$k=1$}
			&60& 6.95e-05 & --& 8.42e-05 & --\\ 
			&70& 4.38e-05 & 2.99& 5.30e-05 & 3.01\\ 
			&80& 2.93e-05 & 2.99& 3.54e-05 & 3.01\\ 
			&90 & 2.06e-05 & 3.00 & 2.49e-05 & 3.01\\ 
			&100 & 1.50e-05 & 3.00& 1.81e-05 & 3.01\\ 
			\midrule
			\multirow{5}{*}{$k=2$}
			&60 & 3.94e-09 & --& 9.74e-08 & --\\
			&70 & 1.82e-09 & 5.02& 4.51e-08 & 5.00\\
			&80 & 9.32e-10 & 5.02& 2.31e-08 & 5.00\\
			&90 & 5.16e-10 & 5.01& 1.28e-08 & 5.00\\
			&100 & 3.04e-10 & 5.01& 7.57e-09 & 5.00\\
			\midrule
			\multirow{5}{*}{$k=3$}
			&20 & 4.64e-09 & --& 3.68e-06 &  --\\ 
			&30 & 3.92e-10 & 6.10 & 3.27e-07 & 5.97\\ 
			&40 & 6.84e-11 & 6.06& 5.83e-08 & 5.99\\ 
			&50 & 1.78e-11 & 6.05 & 1.53e-08 & 5.99\\ 
			&60 & 5.90e-12 & 6.04& 5.14e-09 & 5.99\\
			\bottomrule[1.5pt]
		\end{tabular}
	}
\end{table}

\subsection{2D example}

In 2D, we consider a stationary problem of \eqref{equ:conservation_law} on $\Omega = [0,1]^2$ with periodic boundary conditions and initial data
$
u_0(x,y) = \sin\big( 2\pi (x-y) \big).
$ 
Table~\ref{tab:2D} reports the cell-average and the postprocessing superconvergent errors at $t = 1.1$ using the CDG method on $N_x \times N_y$ uniform cells. These results confirm the 2D theory presented in \cref{coro:supercon2D} and \Cref{lem:postprocessing}.

\begin{table}[!htb]
	\centering
	\belowrulesep=0pt
	\aboverulesep=0pt
	\caption{Errors and convergence rates for 2D $\mathbb{Q}^k$-based CDG with $N_x \times N_y$ cells.}
	\label{tab:2D}
	\setlength{\tabcolsep}{3mm}{
			\begin{tabular}{c|ccccc}
				\toprule[1.2pt]
				\multirow{2}{*}{} &
				\multirow{2}{*}{$N_x \times N_y$} &
				\multicolumn{2}{c}{$e_{\text{avg}}$}& \multicolumn{2}{c}{$\|u(\cdot,T)-S_h\Pi_h^0 u_h^{C,n}\|$} \\
				\cmidrule(r){3-4} \cmidrule(l){5-6}
				& & value & order & value & order\\
			\midrule[1.5pt]
			\multirow{6}{*}{$k=1$}
				& 40 $\times$ 40  & 4.58e-04 & --& 5.14e-04 & --\\ 
			& 50 $\times$ 50 & 2.36e-04 & 2.97& 2.63e-04 & 3.00\\  
			& 60 $\times$ 60  & 1.37e-04 & 2.98& 1.53e-04 & 3.00\\ 
			& 70 $\times$ 70 & 8.66e-05 & 2.98& 9.61e-05 & 3.00\\ 
			& 80 $\times$ 80  & 5.82e-05 & 2.98& 6.44e-05 & 3.00\\  
			& 90 $\times$ 90  & 4.09e-05 & 2.99& 4.52e-05 & 3.00\\ 
			\midrule
			\multirow{6}{*}{$k=2$}
			& 40 $\times$ 40 & 5.27e-08 & --& 1.05e-06 & --\\ 
			& 50 $\times$ 50  & 1.70e-08 & 5.08& 3.45e-07 & 5.00\\ 
			& 60 $\times$ 60 & 6.70e-09 & 5.10& 1.38e-07 & 5.00\\ 
			& 70 $\times$ 70  & 3.04e-09 & 5.13& 6.41e-08 & 5.00\\ 
			& 80 $\times$ 80  & 1.53e-09 & 5.15& 3.29e-08 & 5.00\\ 
			& 90 $\times$ 90  & 8.30e-10 & 5.18& 1.82e-08 & 5.00\\ 
			\midrule
			\multirow{5}{*}{$k=3$}
			& 20 $\times$ 20  & 3.82e-09 & --& 5.18e-06 &--\\ 
			& 30 $\times$ 30  & 3.46e-10 & 5.92& 4.61e-07 & 5.97\\ 
			& 40 $\times$ 40  & 6.29e-11 & 5.93& 8.24e-08 & 5.98\\ 
			& 50 $\times$ 50  & 1.67e-11 & 5.94& 2.17e-08 & 5.99\\ 
			& 60 $\times$ 60  & 5.68e-12 & 5.92& 7.26e-09 & 5.99\\ 
			\bottomrule[1.5pt]
		\end{tabular}
	}
\end{table}

\section{Conclusions}\label{sec:conclusions}

In this work, we developed the first superconvergence analysis for central discontinuous Galerkin (CDG) methods applied to linear hyperbolic equations on overlapping meshes. The $(k+2)$nd-order pointwise behavior of the $\mathbb{Q}^{k}$ CDG method had been observed numerically in  \cite{liu2018optimal}, but  a theoretical proof had remained elusive, primarily because the CDG spatial operator on overlapping meshes lacks the Galerkin orthogonality properties inherent to standard upwind DG schemes.

We have overcome this obstacle by introducing a projection-correction framework and directional correction functions. Central to our analysis is the discovery of a hidden residual cancellation mechanism governing CDG superconvergence: an asymptotic weak constant-test cancellation in one dimension, and an innovative high-order cancellation-by-aggregation (HOCA) mechanism in two dimensions. 
This discovery enabled a rigorous proof of the conjectured $\mathcal{O}(h^{k+2})$ pointwise superconvergence in the discrete $\ell^\infty$-norm under the uncorrected Liu--Shu--Zhang (LSZ) initialization, using an anchored discrete Sobolev inequality. Beyond validating these expected pointwise rates, the proposed framework revealed a previously undiscovered, stronger superconvergence phenomenon. Conditioned on a corrected initialization, we established a higher-order cell-average estimate of order $\mathcal{O}(h^{\sigma_k})$ with $\sigma_k=\min\{2k+1,k+3\}$, a finding that surpasses previous expectations.

The theory was extended to fully discrete explicit RKCDG schemes. By constructing stagewise discrete correction functions, our analysis showed that the spatial superconvergent orders are preserved up to temporal truncation errors under suitable time-step conditions. Combined with the highly accurate cell averages, this extension results in a stable reconstruction-based postprocessing estimate of the same order, $\mathcal{O}(h^{\sigma_k})$. Numerical experiments in one and two spatial dimensions confirmed the predicted cell-average and postprocessed convergence rates.

The model considered here is the canonical one for classical pointwise superconvergence: smooth evolution, explicit superconvergent point sets, and Cartesian (primal--dual overlapping) meshes. It is also the setting in which the essential CDG difficulty is visible in its cleanest form. In one dimension the proof is driven by a weak constant-test cancellation. In two dimensions this cancellation is not inherited by each directional residual separately, because the tensor-product projection has no single local variational characterization and the primal--dual overlap couples transverse and mixed terms. The HOCA argument shows that the sharp order is recovered only after the projection and same-direction correction residuals are aggregated. This feature explains why the multidimensional theorem is not a routine tensorization of the one-dimensional analysis.

The analysis therefore provides structural targets for extensions. For smooth nonlinear conservation laws, this amounts to combining the present projection--correction cancellations with nonlinear flux-expansion and commutator estimates; for variable coefficients, boundary effects, and non-Cartesian meshes, it means identifying replacements for the special point/projection structure and for the HOCA aggregation identities. These directions are natural continuations of the present work, and the theorems proved here supply a benchmark against which such extensions can be measured.

		\bibliography{refs}
		\bibliographystyle{siamplain}

\end{document}